\renewcommand*{\p@subsection}{\S\,}
\renewcommand*{\p@subsubsection}{\S\,}
\theoremstyle{plain}
\newtheorem{theorem}{Theorem}[section]
\newtheorem{lemma}[theorem]{Lemma}
\newtheorem{proposition}[theorem]{Proposition}
\newtheorem{corollary}[theorem]{Corollary}
\newtheorem{thmA}{Theorem}
\theoremstyle{definition}
\newtheorem{definition}[theorem]{Definition}
\theoremstyle{remark}
\newtheorem{remark}[theorem]{Remark}
\numberwithin{equation}{section}
\newcommand{\CC}{\ensuremath{\mathbb{C}}}
\newcommand{\Q}{\ensuremath{\mathbb{Q}}}
\newcommand{\Z}{\ensuremath{\mathbb{Z}}}
\newcommand{\Xtt}{\ensuremath{\mathtt{X}}}
\newcommand{\tr}{\operatorname{tr}}
\newcommand{\Hom}{\operatorname{Hom}}
\newcommand{\End}{\operatorname{End}}
\newcommand{\Mat}{\operatorname{Mat}}
\newcommand{\id}{\operatorname{id}}
\newcommand{\Id}{\operatorname{Id}}
\newcommand{\Gl}{\operatorname{GL}}
\newcommand{\Rep}{\operatorname{Rep}}
\newcommand{\Sym}{\operatorname{Sym}}
\newcommand{\Ad}{\operatorname{Ad}} 
\newcommand{\gl}{\ensuremath{\mathfrak{gl}}}
\newcommand{\g}{\ensuremath{\mathfrak{g}}}
\newcommand{\h}{\ensuremath{\mathfrak{h}}}
\newcommand{\del}{\ensuremath{\partial}}
\newcommand{\nfat}{\ensuremath{\mathbf{n}}} 
\newcommand{\qfat}{\ensuremath{\mathbf{q}}}
\newcommand{\Mcal}{\ensuremath{\mathcal{M}}} 
\newcommand{\Ccal}{\ensuremath{\mathcal{C}}} 
\newcommand{\zbar}{\ensuremath{\underline{z}}}
\newcommand{\dd}{\ensuremath{\mathrm{d}}}
\newcommand{\Inf}{\ensuremath{\mathrm{Inf}}}
\newcommand{\reg}{\ensuremath{\mathrm{reg}}} 
\newcommand{\loc}{\ensuremath{\mathrm{loc}}}
\newcommand\br[1]{\{ #1 \}}
\newcommand\brVdB[1]{\{ #1 \}_{\mathtt{VdB}}}
\newcommand\dgal[1]{  \left\{\!\!\left\{#1\right\}\!\!\right\} }
\newcommand{\Addresses}{{
  \bigskip
  \footnotesize

\noindent M.~Fairon, \textsc{Université Bourgogne Europe, CNRS, IMB UMR 5584, F-21000 Dijon, France}

\noindent  \textit{E-mail address:}  \texttt{maxime.fairon@u-bourgogne.fr} 
}}
\title[Compatible Poisson Structures on MQV]{Compatible Poisson Structures \\ On Multiplicative Quiver Varieties}
\author{Maxime Fairon}
\begin{document}

 \begin{abstract}
Any multiplicative quiver variety is endowed with a Poisson structure constructed by Van den Bergh through reduction from a Hamiltonian quasi-Poisson structure. The smooth locus carries a corresponding symplectic form defined by Yamakawa through quasi-Hamiltonian reduction.
In this note, we include the Poisson structure as part of a pencil of compatible Poisson structures on the multiplicative quiver variety. 
The pencil is defined by reduction from a pencil of Hamiltonian quasi-Poisson structures which has dimension $\ell(\ell-1)/2$, where $\ell$ is the number of arrows in the underlying quiver. 
For each element of the pencil, we exhibit the corresponding compatible symplectic or quasi-Hamiltonian structure. 
We comment on analogous constructions for character varieties and quiver varieties. 
This formalism is applied to the spin Ruijsenaars-Schneider phase space in order to explain the compatibility of two Poisson structures that have recently appeared in the literature.
 \end{abstract} 

\maketitle


\setcounter{tocdepth}{1}
\tableofcontents


\section{Introduction} \label{S:Intro}

\subsection*{Geometric perspective} 

Consider a double quiver $\overline{Q}$, that is a directed graph equipped with an involution $a\mapsto a^\ast$ on its arrows that reverses their orientation (cf. \ref{ss:Not} for the notations). 
Denoting the vertex set by $I$, and fixing a dimension vector $\nfat\in \Z_{\geq 0}^I$,  we can construct the corresponding \emph{representation space} $M_{\overline{Q}}(\nfat)$. 
It is a smooth affine variety parametrized by points that associate a matrix $\Xtt_a\in \End(\oplus_{s\in I} \CC^{n_s})$ with each arrow $a\in \overline{Q}$, such that $\Xtt_a$ has only one non-trivial block of size $n_{t(a)}\times n_{h(a)}$ if $a$ goes from the vertex $t(a)$ (the tail) to the vertex $h(a)$ (the head), see \ref{ss:Not} and \ref{ss:MQV} for precise conventions. 
There is a natural action of $\Gl_{\nfat}:=\prod_{s\in I} \Gl_{n_s}(\CC)$ on $M_{\overline{Q}}(\nfat)$ by simultaneous conjugation of the matrices parametrizing a point $\Xtt:=(\Xtt_a)_{a\in \overline{Q}}$. 
 
Define the open subvariety  $M_{\overline{Q}}^\circ\subset M_{\overline{Q}}$ (we omit to write $\nfat$ hereafter) by imposing the condition $\det(\Id+\Xtt_a \Xtt_{a^\ast})\neq 0$ for each $a\in \overline{Q}$, where $\Id$ is the identity on $\oplus_{s\in I} \CC^{n_s}$. Then the action of $\Gl_{\nfat}$ restricts to $M_{\overline{Q}}^\circ$, and it makes the following morphism equivariant 
\begin{equation} \label{Eq:Intro1}
\Phi:M_{\overline{Q}}^\circ\to \Gl_{\nfat},\quad 
\Phi(\Xtt):=\prod_{a \in \overline{Q}} (\Id+\Xtt_a \Xtt_{a^\ast})^{\epsilon(a)}\,.
\end{equation}
Fix $\qfat=(q_s)\in (\CC^\times)^I$ and form the closed subvariety $\Phi^{-1}(\prod_s q_s \Id_{n_s})$, which first appeared in the study of the Deligne-Simpson problem by Crawley-Boevey and Shaw \cite{CBShaw}. 
In particular, it was shown that $\Phi^{-1}(\prod_s q_s \Id_{n_s})$ is, up to isomorphism, independent of the ordering of the factors in \eqref{Eq:Intro1}, or the orientation $\epsilon:\overline{Q}\to \{\pm1\}$ satisfying $\epsilon(a^\ast)=-\epsilon(a)$. 
The corresponding moduli space $\Mcal_{\overline{Q},\qfat,\theta}$ of stability $\theta\in \Q^I$ with respect to the $\Gl_{\nfat}$ action is a  \emph{multiplicative quiver variety},  see Definition \ref{Def:MQV}. 
As the name suggests, these are analogues of (Nakajima or additive) quiver varieties~\cite{Nak}. 
They attracted attention for studying: their properties of normality~\cite{KS}, the generators of their pure cohomology~\cite{MGN},  their quantization~\cite{GJS,Jo}, their symplectic resolutions \cite{ST}; or to provide their realization as: partial compactifications of character varieties \cite{CB13,Ya}, wild character varieties \cite{Bo15}, moduli of microlocal sheaves on a curve \cite{BK}, phase spaces of integrable systems of Ruijsenaars-Schneider type~\cite{BEF,CF,Fa} (and references therein).

Leaving aside these various directions, we are interested in the original question regarding multiplicative quiver varieties. 
Namely, the problem was to endow $\Mcal_{\overline{Q},\qfat,\theta}$ with a Poisson (or maybe symplectic) structure, such that $\Phi$ \eqref{Eq:Intro1} can be interpreted as a kind of moment map. 
This was solved by Van den Bergh using quasi-Poisson reduction \cite{VdB1}, or by Van den Bergh and independently by Yamakawa using quasi-Hamiltonian reduction \cite{VdB2,Ya}. 
There is a belief that the Poisson structure hence obtained on $\Mcal_{\overline{Q},\qfat,\theta}$ is unique\footnote{Our point of view is to consider the existence of various Poisson structures for the \emph{same} complex structure on the variety. The present paper does not aim at endowing multiplicative quiver varieties with a hyperk\"{a}hler structure, which is an open problem whose solution is only known in some cases, see \cite{Bo15}.} (up to obvious rescaling), as one would generally talk about ``the'' Poisson structure on this variety (or ``the'' symplectic form on its smooth locus). 
Hence, the present work was initiated to investigate this belief, and to build examples where uniqueness does not hold. 
To do so, we shall take a step back and construct pencils of compatible quasi-Poisson structures on $M_{\overline{Q}}^\circ$ before reduction (cf. \ref{s:PenGen} for precise definitions). There, one can rely on a widespread method for studying quiver varieties which is to consider, for each arrow $a\in \overline{Q}$, the (non-contracting) $\CC^\times$-action by symplectomorphisms    
\begin{equation} \label{Eq:Intro2} 
 \Xtt_a \mapsto \lambda \, \Xtt_a, \quad 
 \Xtt_{a^\ast} \mapsto \lambda^{-1} \Xtt_{a^\ast}, \quad 
 \Xtt_b \mapsto \Xtt_b \text{ if }b\neq a,a^\ast, \quad \lambda \in \CC^\times.
\end{equation}
Similarly, the mapping \eqref{Eq:Intro2} preserves Van den Bergh's quasi-Poisson structure on  $M_{\overline{Q}}^\circ$. 
Taking exterior products of infinitesimal actions associated with \eqref{Eq:Intro2} and adding them to Van den Bergh's quasi-Poisson bivector, it is a striking (yet simple) observation that we still have a quasi-Poisson structure with moment map \eqref{Eq:Intro1}. 
A more technical construction also allows to give the corresponding $2$-form. 
This leads to the first main result, proved in \ref{ss:PencilMQV}, for a quiver $Q$ whose double is $\overline{Q}$ (the number of arrows in $Q$ is denoted $|Q|$). 

\begin{thmA} \label{Thm:Main1}
Set $r_Q:=\frac12|Q|(|Q|-1)$. Then the $\Gl_{\nfat}$-variety $M_{\overline{Q}}^\circ$ admits a Hamiltonian quasi-Poisson pencil of order $r \leq r_Q$ centered at the quasi-Poisson bivector $P$ \eqref{Eq:qP-MQV} constructed by Van den Bergh \cite{VdB1}, with moment map  $\Phi$ \eqref{Eq:Intro1}. 
If $n_s\geq 1$ for each $s\in I$, the order is $r=r_Q$. 

Furthermore,  each element from the pencil is non-degenerate, and we can explicitly write the corresponding quasi-Hamiltonian $2$-form. 
By reduction, this descends to a Poisson pencil on the associated multiplicative quiver varieties.
\end{thmA}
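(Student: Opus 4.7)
The strategy is to assemble the pencil from the infinitesimal generators of the arrow-wise $\CC^\times$-actions \eqref{Eq:Intro2}. Fix an orientation $Q \subset \overline{Q}$, and for each $a \in Q$ let $\xi_a$ denote the fundamental vector field of \eqref{Eq:Intro2}, rescaling $\Xtt_a$ with weight $+1$ and $\Xtt_{a^\ast}$ with weight $-1$. For $\underline t = (t_{ab})_{a<b \in Q}$ in $\CC^{r_Q}$, I would take
\[
P_{\underline t} \;:=\; P + \sum_{a<b} t_{ab}\, \xi_a \wedge \xi_b
\]
as the candidate pencil centered at $P$, with shared moment map $\Phi$.

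To check that $P_{\underline t}$ is Hamiltonian quasi-Poisson with moment map $\Phi$, two preservation properties are key. First, each $\xi_a$ preserves $P$, i.e.\ $[\xi_a, P] = 0$; this is transparent because $P$ descends from a double bracket that is homogeneous with respect to the bi-grading induced by the weights of \eqref{Eq:Intro2}. Second, $[\xi_a, \xi_b] = 0$ is immediate, since the actions live on disjoint matrix coordinates. The graded Leibniz rule for the Schouten--Nijenhuis bracket then gives $[P, \xi_a \wedge \xi_b] = 0$ and $[\xi_a \wedge \xi_b, \xi_c \wedge \xi_d] = 0$, so $[P_{\underline t}, P_{\underline t}] = [P, P]$ equals the standard quasi-Poisson anomaly produced by the $\Gl_\nfat$-action on $\Phi$. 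For the moment map condition, observe that $\Phi$ is pointwise invariant under each $\xi_a$ because every factor $\Xtt_a \Xtt_{a^\ast}$ is scale-invariant; hence $(\xi_a \wedge \xi_b)^\sharp(\Phi^\ast \alpha) = 0$ for each $1$-form $\alpha$ on $\Gl_\nfat$, and the moment map equation for $P_{\underline t}$ coincides with that of $P$. Under the assumption $n_s \geq 1$ for all $s$, each $\xi_a$ is nonzero and the family $\{\xi_a\}_{a \in Q}$ is generically linearly independent, so the bivectors $\{\xi_a \wedge \xi_b\}_{a<b}$ are independent over the function field, yielding the announced order $r_Q$.

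The main obstacle is the explicit quasi-Hamiltonian $2$-form and non-degeneracy for all $\underline t$. Each factor $\Phi_a := (\Id + \Xtt_a \Xtt_{a^\ast})^{\epsilon(a)}$ is a group-valued moment map for the $\CC^\times$-action generating $\xi_a$, so in the quasi-Hamiltonian picture $\xi_a$ is paired with the logarithmic $1$-form $\alpha_a := \tr(\Phi_a^{-1}\, \dd \Phi_a)$, modified by the correction prescribed by the moment map equation. I would then write
\[
\omega_{\underline t} \;=\; \omega + \sum_{a<b} t_{ab}\, \alpha_a \wedge \alpha_b + (\text{quadratic correction}),
\]
with the correction organised by the abstract pencil formalism of \ref{s:PenGen}, and verify directly that $\omega_{\underline t}$ inverts $P_{\underline t}$ on the smooth locus. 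Non-degeneracy propagates because the perturbation mixes only directions in the span of the $\xi_a$. Quasi-Hamiltonian, respectively quasi-Poisson, reduction along the common moment map $\Phi$ is then a standard transfer and produces the announced pencil on $\Mcal_{\overline{Q}, \qfat, \theta}$, symplectic on its smooth locus; the principal remaining work is to package the quadratic correction in a closed uniform form, after which non-degeneracy and the reduction step follow routinely.
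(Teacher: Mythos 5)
Your bivector-level argument is essentially the paper's (Lemma \ref{Lem:Pencil-1} combined with Lemma \ref{Lem:AltPenc}): invariance of $P$ and $\Phi$ under the commuting $\CC^\times$-actions gives $[P,\xi_a\wedge\xi_b]=0$, $[\xi_a\wedge\xi_b,\xi_c\wedge\xi_d]=0$, and the vanishing of $(\xi_a\wedge\xi_b)^\sharp$ on pullbacks of $1$-forms from $\Gl_{\nfat}$; the order count is also the same. (You should still record that each $\xi_a\wedge\xi_b$ is $\Gl_{\nfat}$-invariant, which is part of the definition of a quasi-Poisson bivector, but that is immediate.)

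The genuine gap is the second half of the theorem. First, the ``quadratic correction'' you leave unspecified is in fact zero: the correct $2$-form is exactly $\omega+\sum_{a<b}z_{a,b}\,\alpha_a\wedge\alpha_b$ with $\alpha_a=\tr\left[(\Id+\Xtt_a\Xtt_{a^\ast})^{-1}\,\dd(\Id+\Xtt_a\Xtt_{a^\ast})\right]$, cf. \eqref{Eq:varpiMQV}. Second, neither the claim that this form inverts $P_{\underline t}$ in the sense of \eqref{Eq:corrPOm} nor the assertion that ``non-degeneracy propagates because the perturbation mixes only directions in the span of the $\xi_a$'' is substantiated; the latter is not an argument (non-degeneracy is the surjectivity \eqref{Eq:nondeg}, and in the paper it is \emph{deduced} from Lemma \ref{Lem:Compat} once the compatibility \eqref{Eq:corrPOm} is verified, not checked separately). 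The actual content is Proposition \ref{Pr:Pencil-Cact}: with $\psi=\sum z_{a,b}\,\xi_a\wedge\xi_b$ and $\varpi=\sum z_{a,b}\,\alpha_a\wedge\alpha_b$ one must show $\psi^\sharp\circ\varpi^\flat=0$ and $\psi^\sharp\circ\omega^\flat=-P^\sharp\circ\varpi^\flat$, and the two identities that make this work are $\omega^\flat(\xi_a)=\alpha_a$ and $P^\sharp(\alpha_a)=\xi_a$. Crucially, the first is an instance of axiom \eqref{Eq:B2} only when $\xi_a$ is the infinitesimal action of a \emph{central} element of a factor of the acting group whose moment-map component is $(\Id+\Xtt_a\Xtt_{a^\ast})^{\epsilon(a)}$; this holds for $M_{\overline{Q}}^\circ$ viewed as a $\prod_{a\in\overline{Q}}\Gl_{n_{t(a)}}$-variety \emph{before} fusion (Remark \ref{Rem:MQV-fusion}), not for the fused $\Gl_{\nfat}$-structure you work with. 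So one either runs the argument before fusion and transports it through fusion (Lemma \ref{Lem:FusPenc}), or re-derives $\omega^\flat(\xi_a)=\alpha_a$ by hand for the fused $\omega$ \eqref{Eq:omega-MQV}. Until this is done, the non-degeneracy and the explicit $2$-form --- hence also the symplectic forms on the reduced spaces --- remain unproved.
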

Interestingly, this method can be applied in more general contexts, cf. the key Proposition \ref{Pr:Pencil-Cact}. 
We are able to state variants of Theorem \ref{Thm:Main1} in \ref{s:VarPencil} for generalized multiplicative quiver varieties, character varieties, or (additive) quiver varieties. 
We shall use Theorem \ref{Thm:Main1} in Section \ref{S:ApplRS} to study a family of multiplicative quiver varieties with numerous non-equivalent Poisson structures, and whose motivation we now review.

\subsection*{Mathematical physics perspective} 

In 1995, a spin generalization of the celebrated Ruijsenaars-Schneider (RS) system \cite{RS} was introduced by Krichever and Zabrodin \cite{KrZ}. Loosely speaking, they managed to upgrade a classical integrable system of $n$ relativistic particles by endowing each particle with $d\geq 2$ internal degrees of freedom, the so-called \emph{spin variables}, without breaking integrability of the system. In fact, their approach used an extra reduction of the model to claim integrability, which thus left open the question of describing the phase space of the system as a complex Poisson manifold of (complex) dimension $2nd$. 
In the case of a rational potential, this was quickly solved by Arutyunov and Frolov \cite{AF}. 
However, apart from a conjecture on the form of the Poisson brackets in the trigonometric case formulated in \cite{AF}, and the $n=2$ elliptic case partly studied in \cite{So}, no progress could be achieved for almost 20 years.   

Using the technology of multiplicative quiver varieties, Chalykh and the author \cite{CF} described the phase space of the spin RS system in the trigonometric case from the quiver $Q_d$ made of $1$ loop and $d$ extra arrows. 
This approach is satisfying as it gives a direct multiplicative analogue of Wilson's construction for the (non-relativistic) spin Calogero-Moser system \cite{T15,Wi}. Furthermore, it provided a proof to the conjecture of Arutyunov and Frolov \cite{AF}. 
While this may have closed the study of the trigonometric spin RS system, a preprint by Arutyunov and Olivucci \cite{AO} appeared slightly later, where they derived the phase space through Poisson-Lie reduction. 
This last work created some puzzlement because it constructs another Poisson structure which \emph{does not} satisfy the original conjecture from \cite{AF}. 
It may have been unclear how to resolve that state of affairs without a key observation about a real form of the trigonometric spin RS system recently considered in~\cite{FF23}: the real form admits a pencil of compatible Poisson brackets, which all yield the \emph{same} equations of motion. It seemed therefore natural to conjecture in \cite{FF23} that a similar situation arises in the complex setting, and that it bridges the gap between the approaches of \cite{CF} and \cite{AO}.  
The second main result of this paper meets this expectation as follows.
\begin{thmA} \label{Thm:Main2} 
Conjecture~C.2 in \cite{FF23} is true.
That is, the Poisson structures constructed by Chalykh-Fairon \cite{CF} and Arutyunov-Olivucci \cite{AO} on a local parametrization of the spin RS phase space belong to a Poisson pencil of order $\frac{(d-1)(d-2)}{2} \leq r \leq \frac{(d-1)d}{2}$, hence they are compatible.
Furthermore, any non-degenerate Poisson bracket from this Poisson pencil provides a Hamiltonian formulation for the trigonometric spin RS system.
\end{thmA}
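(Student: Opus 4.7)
The strategy is to apply Theorem \ref{Thm:Main1} to the specific quiver $Q_d$ underlying the Chalykh--Fairon construction, then transport the resulting pencil to the local parametrization of the spin RS phase space used on both sides of the conjectured equivalence. First, I would recall explicitly the quiver $Q_d$ (one loop at the main vertex plus $d$ framing arrows, with dimension vector making the framing vertex one-dimensional) and the multiplicative moment map level/stability $(\qfat,\theta)$ at which the trigonometric spin RS phase space is realized as $\Mcal_{\overline{Q_d},\qfat,\theta}$ in \cite{CF}. Theorem \ref{Thm:Main1} then provides a Hamiltonian quasi-Poisson pencil on $M_{\overline{Q_d}}^\circ$ spanned by Van den Bergh's bivector $P$ together with the bivectors built from pairs of the infinitesimal $\CC^\times$-actions \eqref{Eq:Intro2}, and guarantees that this pencil descends by reduction to a Poisson pencil on the multiplicative quiver variety. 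Van den Bergh's reduced bracket is the Chalykh--Fairon bracket by \cite{CF}, so one member of the pencil is already identified.

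Next I would locate the Arutyunov--Olivucci bracket inside the same pencil. The natural way is to move to the open dense subset of $\Mcal_{\overline{Q_d},\qfat,\theta}$ on which both \cite{CF} and \cite{AO} give explicit coordinates (position variables $x_i$, a distinguished group-like matrix, and the spin variables $c_i^\alpha, c^\alpha_i$, or their analogues), compute the pullback of each generator of the pencil in these coordinates, and match a specific linear combination with the Poisson brackets written down in \cite{AO}. Because the arrow-wise $\CC^\times$-actions \eqref{Eq:Intro2} act diagonally on the spin variables, their associated correction bivectors only add bilinear terms in the $c$'s to the brackets, which is exactly the type of discrepancy between the two presentations highlighted in \cite{FF23}. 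I expect the verification to reduce to solving a finite linear system expressing the AO bracket on spin variables as a combination of the basis pencil elements; this bookkeeping, including fixing signs and weight conventions for \eqref{Eq:Intro2}, is where most of the actual work lies and is the main technical obstacle.

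The order bounds in the statement are then almost a corollary of the setup. The upper bound $r \leq (d-1)d/2$ comes from Theorem \ref{Thm:Main1} after observing that, at the chosen dimension vector, the $\CC^\times$-action along any framing arrow coincides (modulo the action coming from the moment map condition or from gauge transformations at the framing vertex) with the action along another arrow, so that only $d-1$ framing actions survive as linearly independent on the reduced space and contribute $(d-1)(d-2)/2$ pairwise bivectors, to which one adds the actions paired with the loop to reach at most $(d-1)d/2$. The lower bound $(d-1)(d-2)/2$ is established by exhibiting that many linearly independent elements of the pencil explicitly in coordinates; in fact, the CF and AO brackets themselves already account for two distinct non-zero classes and the remaining ones follow from the $S_{d-1}$-symmetry permuting the framing arrows.

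Finally, for the Hamiltonian formulation claim, the key observation is that the trigonometric spin RS Hamiltonian (and the full commuting family) is invariant under each of the $\CC^\times$-actions \eqref{Eq:Intro2}, because these actions merely rescale the spin variables in dual pairs while the RS Hamiltonian depends on such rescaling-invariant combinations. Consequently, the Hamiltonian vector fields computed with respect to any member of the pencil differ from those computed with $P$ by terms of the form $\iota_{dH}(v \wedge w)$ where $v,w$ are the infinitesimal generators of $\CC^\times$-actions annihilating $H$; these terms vanish identically on $H$, so every non-degenerate Poisson bracket in the pencil reproduces the same equations of motion, yielding the advertised Hamiltonian formulation and completing the proof of Conjecture~C.2 of \cite{FF23}.
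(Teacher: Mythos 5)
Your overall strategy is the paper's: apply Theorem \ref{Thm:Main1} to the quiver $Q_d$, descend to the local coordinates $(x_i,a_i^\alpha,b_i^\alpha)$ on $\h_\reg/S_n$ shared by \cite{CF} and \cite{AO}, identify the Arutyunov--Olivucci bracket as $\br{-,-}_0+\psi^{\loc}_{\zbar}$ for a specific parameter choice (the paper finds $z_{\alpha\beta}=\tfrac12$ for $\alpha<\beta$, Proposition \ref{Prop:pencilRS}), and prove the Hamiltonian claim by showing the added bivectors annihilate $\dd h$. You correctly locate the main computational burden in the matching step, though you defer it.

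There is, however, one genuine inconsistency in your plan. For the order bounds you build the pencil out of \emph{all} pairs of arrow-wise $\CC^\times$-actions, including the pairs involving the loop $x$, and you count $(d-1)(d-2)/2$ framing--framing pairs plus $d-1$ loop--framing pairs to reach $(d-1)d/2$. But in the last paragraph you assert that the spin RS Hamiltonian is invariant under \emph{each} of the actions \eqref{Eq:Intro2}. That is false for the loop action: $h=2(q^{-1}-1)\tr(Z)$ is homogeneous of nonzero degree under $\mathcal{A}_x$ (which rescales $\Xtt_x$ and $\Xtt_{x^\ast}$), so for a pencil member containing a term $z_{x,v_\alpha}\,\mathcal{A}_x(1)\wedge\mathcal{A}_{v_\alpha}(1)$ the contraction with $\dd h$ produces $\pm\,z_{x,v_\alpha}\,\tr(Z)\,\mathcal{A}_{v_\alpha}(1)$, which does not vanish unless the $z_{x,v_\alpha}$ are all equal (in which case the loop pairs contribute nothing on the reduced space, since $\sum_\alpha \mathcal{A}_{v_\alpha}(1)$ is a gauge vector field there). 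This is exactly why the paper sets $z_{x,v_\alpha}=0$ from the outset in \S\,\ref{ss:RS-pencil}, obtains the upper bound $d(d-1)/2$ simply as the number of framing--framing parameters, and records in the remark after Proposition \ref{Prop:HamRS} that enlarging the pencil by the loop pairs destroys the uniform Hamiltonian formulation. Relatedly, your lower bound via ``$S_{d-1}$-symmetry'' is too vague to be conclusive: the paper's argument (Corollary \ref{Cor:RS-rank}) evaluates $\psi^{\loc}_{\zbar}$ at a point with $a_i^\alpha=\delta_{\alpha,1}$ and shows the kernel of $\zbar\mapsto\psi^{\loc}_{\zbar}$ there is exactly $(d-1)$-dimensional (cut out by $z_{\alpha\beta}=z_{\alpha 1}+z_{1\beta}$), which is what yields $(d-1)(d-2)/2$; some such pointwise rank computation is unavoidable because the reduced bivectors $\psi^{\loc}_{\zbar}$ do degenerate (e.g.\ identically for $d=2$).
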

The first part of the statement is obtained in Proposition \ref{Prop:pencilRS}, while the second part is established in Proposition \ref{Prop:HamRS}. To derive the pencil, we explicitly rely on Theorem \ref{Thm:Main1} to construct a quasi-Poisson pencil before quasi-Hamiltonian reduction on an open representation space $M_{\overline{Q}_d}^\circ$ associated with the quiver $Q_d$.  

We should warn the reader that we are working in a local coordinate system to prove Theorem \ref{Thm:Main2} because the reduction techniques are different: for multiplicative quiver varieties we use quasi-Poisson reduction, while the approach of Arutyunov-Olivucci uses Poisson-Lie reduction. In particular, the master phase space $M_{\overline{Q}_d}^\circ$ and its moment map do \emph{not} appear in \cite{AO}; only the slice $\Phi^{-1}(q\Id_n)$ with $\Phi$ given by \eqref{Eq:Momap-RS-2} can be realized in \cite{AO} using the ($2^n$ to $1$) local diffeomorphism $\Gl_n(\CC)^\ast \to \Gl_n(\CC)$, $(h_+,h_-)\mapsto h_+ h_-^{-1}$, where $\Gl_n(\CC)^\ast$ is the Poisson-Lie dual of $\Gl_n(\CC)$. This slice does not carry an induced Poisson structure, for the bracket is only closed with respect to invariant functions on the slice.   
Thus, one is forced to compare the 2 models only after reduction.

\medskip 

\noindent \textbf{Layout.} In Section~\ref{S:Rem}, we recall the basics about algebraic quasi-Poisson and quasi-Hamiltonian geometries, and their use for constructing multiplicative quiver varieties following Van den Bergh and Yamakawa \cite{VdB1,VdB2,Ya}. 
We introduce the notion of quasi-Poisson pencils in Section~\ref{S:Pencil}, and we describe their elementary properties and ways to build these. Such constructions are applied to the open representation spaces $M_{\overline{Q}}^\circ$
that lead to multiplicative quiver varieties, hence proving Theorem~\ref{Thm:Main1}. We also consider this formalism for related families of moduli spaces, such as character varieties and quiver varieties. 
We investigate a universal formulation of the pencil in Section~\ref{Sec:NC}, in the spirit of Van den Bergh's noncommutative Poisson geometry \cite{VdB1}.  
Section~\ref{S:ApplRS} contains the parts of the paper related to the spin Ruijsenaars-Schneider phase space where, in particular, we derive Theorem~\ref{Thm:Main2}. 

\medskip 

\noindent \textbf{Acknowledgments.}
It is my pleasure to thank Oleg Chalykh, Laszlo Feh\'{e}r, David Fern\'{a}ndez, Anne Moreau and Olivier Schiffmann for stimulating discussions. I am especially indebted to Laszlo Feh\'{e}r for comments on a first version of this paper, and to Anton Alekseev for encouraging me to develop Remark \ref{Rem:MixedPoi} and Section \ref{Sec:NC}.
This project received funding from the European Union's Horizon 2020 research and innovation programme under the Marie 
Sk\l{}odowska-Curie grant agreement No.~101034255, while affiliated to \emph{Laboratoire de Mathématiques d'Orsay}.
It was completed at IMB which is supported by the EIPHI Graduate School (contract ANR-17-EURE-0002).


\section{Original Poisson structure on multiplicative quiver varieties}  \label{S:Rem}

We recall the construction of multiplicative quiver varieties and their known Poisson structure.

\subsection{Notation}  \label{ss:Not}

We work over the field of complex numbers $\CC$ and set $\otimes := \otimes_\CC$. 

A quiver $(Q,I,h,t)$, or $Q$ for short, is the data of a set $Q$ of arrows, a set $I$ of vertices, and the head or tail maps $h,t:Q\to I$. Quivers are assumed to be finite, i.e. $|Q|,|I|<\infty$. We may depict an arrow $a\in Q$ as $a:t(a)\to h(a)$ or $t(a) \stackrel{a}{\longrightarrow} h(a)$. 
The path algebra $\CC Q$ of $Q$ is the $\CC$-algebra generated by $\{a\in Q\}\cup \{e_s \mid s\in I\}$, where the latter are a complete set of orthogonal idempotents (i.e. $e_r e_s=\delta_{rs} e_s$ for $r,s\in I$ and $\sum_{s\in I} e_s = 1$), subject to the mixed relations $a=e_{t(a)}ae_{h(a)}$ for each $a\in Q$. 
This means that we write path from left to right as in \cite{VdB1,VdB2}. 
We put $B:=\oplus_{s\in I} \CC e_s$ and see $\CC Q$ as a $B$-algebra.

Let $Q$ be a quiver. 
Denote by $\overline{Q}$ its double, obtained by adding an arrow $a^\ast:h(a)\to t(a)$ for each $a\in Q$. 
This endows $\overline{Q}$ with an involution $(-)^\ast:\overline{Q}\to \overline{Q}$ if we put $(a^\ast)^\ast:=a$ for each $a\in Q$.
Thus $h(a^\ast)=t(a)$ for all $a\in \overline{Q}$.
To distinguish between arrows originally in $Q$ and those in $\overline{Q}\setminus Q$, we have the orientation map $\epsilon : \overline{Q}\to \{\pm 1\}$ given by $\epsilon(a)=+1$, $\epsilon(a^\ast)=-1$ for any $a\in Q$.

\subsection{Quasi-Poisson and quasi-Hamiltonian geometry} 

The complex algebraic treatment of quasi-Hamiltonian \cite{AMM} and quasi-Poisson \cite{AKSM} geometry can be found in Boalch \cite{Bo07}, Van den Bergh \cite{VdB1,VdB2} and Yamakawa \cite{Ya}. The recent work of Huebschmann \cite{Hu} provides a more general setup. 

\subsubsection{Basics} \label{ss:Setup}
Hereafter, we assume that $G$ is a complex reductive algebraic subgroup of $\Gl_N(\CC)$ for some $N\geq 1$, and its Lie algebra  $\g$, seen as a subspace of $\gl_N(\CC)$, inherits the $\Ad$-invariant non-degenerate symmetric trace form 
\begin{equation*}
 (-,-)_{\g} : \g\times\g\to \CC\,, \quad (\xi,\xi')_{\g}:= \tr(\xi \xi')\,.
\end{equation*}
Fix dual bases $(E_a)_{a}$ and $(E^a)_a$ of $\g$ with respect to the trace form, i.e. $(E_a,E^b)_{\g}=\delta_{ab}$.
We define the $\Ad$-invariant Cartan trivector  $\phi \in\wedge^3\g$ by
\begin{equation} \label{Eq:Cartan3}
 \phi :=\frac{1}{12} \sum_{a,b,c} (E^a,[E^b,E^c])_{\g} \, E_a\wedge E_b \wedge E_c\,.
\end{equation}
Introduce for any $\xi \in \g$ the left- and right-invariant vector fields $\xi^L$ and $\xi^R$ on $G$.
They act by derivation according to  
$\xi^L(F)(g)=\left.\frac{d}{dt}\right|_{t=0} F(g \,e^{t \xi})$ and 
$\xi^R(F)(g)=\left.\frac{d}{dt}\right|_{t=0} F( e^{t \xi}\, g)$, 
for any function $F$ in an open neighborhood of $g\in G$. 
(We shall freely use analytic notation.) 
They allow to define a $\g$-valued differential operator $\mathcal{D}$ by $\mathcal{D}(F):=\frac12 \sum_a (E_a^L+E_a^R)(F)\, E^a$ for any function $F$ on $G$. 

Let $\theta^L=g^{-1}\dd g$ and $\theta^R=\dd g\, g^{-1}$ be the left- and right-invariant Maurer-Cartan elements in matrix notation. 
These are $\g$-valued $1$-forms, from which we define the closed $\Ad$-invariant $3$-form
\begin{equation} \label{Eq:eta3}
 \eta:=\frac{1}{12} (\theta^R\stackrel{\wedge}{,}[\theta^R\stackrel{\wedge}{,}\theta^R])_{\g}
 =\frac{1}{6} \tr(\dd g \, g^{-1} \wedge \dd g\, g^{-1} \wedge \dd g\, g^{-1})\,.
\end{equation}

Let $M$ be an affine complex algebraic variety with a left action of $G$ denoted by $(g,x)\mapsto g\cdot x$ for any $g\in G$, $x\in M$. 
The infinitesimal action of $\g$ on $M$ assigns to each  $\xi\in \g$ a vector field $\xi_M$ on $M$ satisfying 
\begin{equation} \label{EqinfVectM}
 \xi_M(f)(x)=\left.\frac{d}{dt}\right|_{t=0} f(\exp(-t\xi)\cdot x)\,,
\end{equation}
for any function $f$ in an open neighborhood of $x\in M$.
The map $\xi\mapsto \xi_M$ can be extended equivariantly to return a $k$-vector field $\psi_M$ for any $\psi\in\wedge^k\g$.  
We denote by $Q_x\in \bigwedge^\bullet T_xM$ the evaluation of a multivector field $Q$ on $M$ at a point $x\in M$ and use a similar notation for $k$-forms.

\subsubsection{Definitions and standard results} 

Let $G$ and $M$ be as above. We view $G$ as acting on itself by the adjoint action. 
Given a bivector $P\in \Gamma(M,\bigwedge^2 TM)$, we let $P^\sharp : T^\ast M \to TM$ be defined by\footnote{Our convention is $\langle X_1\otimes X_2, \dd f_1 \otimes \dd f_2 \rangle:= X_1(f_1)\, X_2(f_2)$ for vector fields $X_1,X_2$ and functions $f_1,f_2$ on $M$.} $P^\sharp(\alpha_1)(\alpha_2)=\langle P, \alpha_1 \otimes \alpha_2 \rangle$ for $1$-forms $\alpha_{1,2}$ on $M$. 
\begin{definition} \label{def:HamqP}
A $G$-invariant bivector $P$ on $M$ is \emph{quasi-Poisson} if $[P,P]=\phi_M$, where $[-,-]$ denotes the Schouten-Nijenhuis bracket on $M$. Then $(M,P)$ is called a \emph{quasi-Poisson variety}. 

\noindent The quasi-Poisson bivector $P$ is \emph{non-degenerate} if, for any $x\in M$, the following map is surjective~:  
\begin{equation} \label{Eq:nondeg}
 T^\ast_x M\times \g\to T_xM, \quad (\alpha, \xi)\mapsto P^\sharp_x(\alpha)+\xi_{M,x}\,.
\end{equation}
 
\noindent The quasi-Poisson variety $(M,P)$ is \emph{Hamiltonian} if it admits a \emph{moment map}, that is a $G$-equivariant morphism $\Phi:M\to G$ satisfying for any function $F$ on $G$, 
\begin{equation} \label{Eq:momap}
  P^\sharp(\dd (\Phi^\ast F))= (\Phi^\ast \mathcal{D}(F))_M\,. 
\end{equation} 
\end{definition}

Given a $2$-form $\omega$ on $M$, we let $\omega^\flat:TM\to T^\ast M$ be defined by $\omega^\flat(X_1)(X_2) = \langle X_1\otimes X_2,\omega \rangle$.  
\begin{definition}  \label{def:qHam}
Fix a $G$-invariant $2$-form $\omega$ on $M$ and a $G$-equivariant morphism $\Phi:M \to G$. 
We say that the triple $(M,\omega,\Phi)$ is a \emph{quasi-Hamiltonian variety} (with \emph{moment map} given by $\Phi$)  if the following three conditions are satisfied:
\begin{subequations}
 \begin{align}
  &\dd\omega=\Phi^\ast \eta\,, \label{Eq:B1} \\
  &\omega^{\flat}(\xi_M)=\frac12 \Phi^\ast (\xi,\,\theta^L+\theta^R\,)_{\g} \,, \quad \forall \xi \in \g\,, \label{Eq:B2} \\
  &T_xM \to T^\ast_x M\times \g, \,\, X\mapsto (\omega_x^\flat(X)\,, \langle X,(\Phi^\ast \theta^L)_x\rangle ), \text{ is injective } \quad \forall x\in M\,. \label{Eq:B3}
 \end{align}
\end{subequations} 
\end{definition}

\begin{remark} \label{Rem:Conv-qH}
We follow \cite{AKSM,VdB1} for the rule \eqref{Eq:B1}, which is taken with an opposite sign in \cite{AMM,Ya}. 
The rule \eqref{Eq:B3} is stated in that form in \cite{VdB2}, which is equivalent to explicitly characterizing $\ker \omega_x$ as in \cite{AKSM,AMM}, see \cite[Lemma 4.1]{VdB2} or \cite[\S4.2]{Hu}.  
\end{remark}

\begin{proposition} \label{Pr:Corr}
 There is a $1$-$1$ correspondence between structures of non-degenerate Hamiltonian quasi-Poisson variety and those of quasi-Hamiltonian variety on $M$ admitting the (same) moment map $\Phi:M\to G$ for which the associated quasi-Poisson bivector $P$ and $2$-form $\omega$ are related by: 
\begin{equation} \label{Eq:corrPOm}
 P^\sharp \circ \omega^\flat = \Id_{TM}- \frac14 \sum_{a} (E_a)_M \otimes \Phi^\ast(E^a,\theta^L-\theta^R)_{\g}\,.
\end{equation}
\end{proposition}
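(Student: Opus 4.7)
The plan is to adapt the standard correspondence between quasi-Hamiltonian and non-degenerate Hamiltonian quasi-Poisson structures due to Alekseev--Kosmann-Schwarzbach--Meinrenken \cite{AKSM} to the complex algebraic setting, following Van den Bergh \cite{VdB1,VdB2} and Yamakawa \cite{Ya}. We take \eqref{Eq:corrPOm} as the defining relation between $P^\sharp$ and $\omega^\flat$ at each point $x \in M$, and show that the axioms of Definitions~\ref{def:HamqP} and~\ref{def:qHam} translate into one another.

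The first task is pointwise linear algebra: I would show that the non-degeneracy hypothesis \eqref{Eq:nondeg} on the quasi-Poisson side is equivalent, under \eqref{Eq:corrPOm}, to the injectivity \eqref{Eq:B3} on the quasi-Hamiltonian side, and that in both cases \eqref{Eq:corrPOm} determines each of $P^\sharp$ and $\omega^\flat$ uniquely from the other. Skew-symmetry of the resulting operator is then a direct verification using the moment-map condition and $G$-equivariance of $\Phi$, which ensure that the correction term $-\tfrac14 \sum_a (E_a)_M \otimes \Phi^\ast(E^a,\theta^L-\theta^R)_\g$ contributes in a skew-compatible way.

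Next, the two moment-map identities \eqref{Eq:momap} and \eqref{Eq:B2} are matched by evaluating \eqref{Eq:corrPOm} on a fundamental vector field $\xi_M$ (respectively on a pulled-back $1$-form $\Phi^\ast \dd F$). Using the expression of $\mathcal{D}$ in terms of the invariant vector fields $\xi^L, \xi^R$, the standard identities $\theta^L(\xi^L) = \theta^R(\xi^R) = \xi$, and invariance of the trace form on $\g$, a short calculation shows that each of \eqref{Eq:momap} and \eqref{Eq:B2} implies the other.

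The main obstacle is the equivalence: $[P,P] = \phi_M$ if and only if $\dd\omega = \Phi^\ast\eta$. I would expand $[P,P]$ by contracting with three $1$-forms, each split via \eqref{Eq:corrPOm} into a horizontal piece handled by $\omega^\flat$ and a vertical piece pulled back from $G$ through $\Phi$. The moment-map condition established in the previous step rewrites the cross-terms as pullbacks, while the Maurer--Cartan equation $\dd\theta^L = -\tfrac12[\theta^L,\theta^L]$ isolates the discrepancy as $\Phi^\ast\eta - \dd\omega$, with the factor $\tfrac1{12}$ appearing in both \eqref{Eq:Cartan3} and \eqref{Eq:eta3} ensuring the comparison is numerically exact. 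This last computation is the bulk of the proof; since it is entirely local, one can alternatively reduce to the differential-geometric statement of \cite{AKSM} via the underlying complex analytic structure on $M$ and $G$.
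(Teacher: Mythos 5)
The paper does not actually prove Proposition \ref{Pr:Corr}: it is recalled as a standard result whose complex algebraic versions are attributed to \cite{Bo07,VdB2,Ya,Hu}, the original differential-geometric statement being \cite[Thm.~10.3]{AKSM}. Your outline reconstructs precisely that standard argument --- pointwise linear algebra matching \eqref{Eq:nondeg} with \eqref{Eq:B3}, the exchange of the moment-map conditions \eqref{Eq:momap} and \eqref{Eq:B2}, and the equivalence of $[P,P]=\phi_M$ with $\dd\omega=\Phi^\ast\eta$ as the substantial computation --- so in substance it is the same proof the paper is implicitly invoking. One claim should be corrected rather than merely expanded: it is not true that \eqref{Eq:corrPOm} alone ``determines each of $P^\sharp$ and $\omega^\flat$ uniquely from the other,'' since $P^\sharp$ need not be injective and $\omega^\flat$ need not be surjective. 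Going from $P$ to $\omega$, one must prescribe $\omega^\flat$ on $\operatorname{im}P^\sharp_x$ by skew-symmetry together with \eqref{Eq:corrPOm}, prescribe it on the fundamental vector fields by \eqref{Eq:B2}, and then verify consistency on $\operatorname{im}P^\sharp_x\cap\g_{M,x}$ using the moment-map condition; it is exactly here that the non-degeneracy hypothesis \eqref{Eq:nondeg}, i.e.\ that these two subspaces span $T_xM$, does its work. With that well-definedness step made explicit, your sketch is a faithful rendering of the argument of \cite{AKSM} and its algebraic adaptations, and the remaining verifications proceed as you describe.
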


\begin{lemma} \label{Lem:Compat}
Let $(M,P,\Phi)$ be a Hamiltonian quasi-Poisson variety. 
Assume that there exists a $2$-form $\omega$ such that $(M,\omega,\Phi)$ satisfies \eqref{Eq:B1}, \eqref{Eq:B2},
and the compatibility condition \eqref{Eq:corrPOm} holds for $P$ with $\omega$. 
Then $P$ is non-degenerate and $(M,\omega,\Phi)$ is a quasi-Hamiltonian variety. 
\end{lemma}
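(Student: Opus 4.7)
The plan is to extract both required properties directly from the compatibility identity \eqref{Eq:corrPOm}, without having to revisit \eqref{Eq:B1} or \eqref{Eq:B2} (which are assumed). The core observation is that \eqref{Eq:corrPOm} is an endomorphism identity on $TM$, so evaluating it on an arbitrary tangent vector at a point gives a decomposition formula that drives everything.

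First, for non-degeneracy of $P$, I would fix $x\in M$ and $X\in T_xM$ and apply the right-hand side of \eqref{Eq:corrPOm} to $X$. This yields
\[
X = P^\sharp_x(\omega^\flat_x(X)) + \tfrac14 \sum_a \bigl\langle X, \Phi^\ast(E^a,\theta^L-\theta^R)_\g\bigr\rangle_x (E_a)_{M,x}.
\]
Setting $\alpha:=\omega^\flat_x(X)\in T^\ast_xM$ and $\xi:=\tfrac14 \sum_a \langle X,\Phi^\ast(E^a,\theta^L-\theta^R)_\g\rangle_x\, E_a\in\g$, we express $X = P^\sharp_x(\alpha)+\xi_{M,x}$, so the map in \eqref{Eq:nondeg} is surjective at every point.

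Next, for condition \eqref{Eq:B3}, I would suppose $X\in T_xM$ satisfies $\omega^\flat_x(X)=0$ and $\langle X,(\Phi^\ast\theta^L)_x\rangle=0$. Plugging into the same decomposition, the first term drops out and we get
\[
X = \tfrac14 \sum_a \bigl\langle X, \Phi^\ast(E^a,\theta^L-\theta^R)_\g\bigr\rangle_x (E_a)_{M,x}.
\]
The key small step is to argue that the coefficients here vanish: since $\theta^L_g:T_gG\to\g$ is a pointwise linear isomorphism, $\langle X,\Phi^\ast\theta^L\rangle=0$ forces $\dd\Phi(X)=0$, hence $\langle X,\Phi^\ast\theta^R\rangle=0$ as well, and then each $\langle X,\Phi^\ast(E^a,\theta^L-\theta^R)_\g\rangle$ is zero. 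Therefore $X=0$, proving injectivity of the map in \eqref{Eq:B3}.

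Combining the two steps, $P$ is non-degenerate and $(M,\omega,\Phi)$ satisfies all three conditions \eqref{Eq:B1}--\eqref{Eq:B3}, so it is a quasi-Hamiltonian variety. There is no real obstacle here beyond bookkeeping; the only conceptual point worth flagging is the pointwise invertibility of the Maurer-Cartan form, which is what converts the one hypothesis $\langle X,\Phi^\ast\theta^L\rangle=0$ into the stronger statement $\dd\Phi(X)=0$ used to kill the correction terms. One may note in passing that this lemma provides exactly the direction of Proposition~\ref{Pr:Corr} which says that a $2$-form compatible with a Hamiltonian quasi-Poisson structure in the sense of \eqref{Eq:corrPOm} automatically lifts to a quasi-Hamiltonian structure, so it will be the convenient tool for producing the ``corresponding quasi-Hamiltonian $2$-form'' promised in Theorem~\ref{Thm:Main1}.
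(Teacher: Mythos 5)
Your proof is correct: rearranging \eqref{Eq:corrPOm} to write $X = P^\sharp(\omega^\flat(X)) + \xi_{M}$ gives surjectivity in \eqref{Eq:nondeg}, and the pointwise invertibility of $\theta^L$ correctly converts $\langle X,\Phi^\ast\theta^L\rangle=0$ into $\dd\Phi(X)=0$, killing the correction term and yielding \eqref{Eq:B3}. The paper states this lemma without proof (it is the standard argument underlying the correspondence of Proposition~\ref{Pr:Corr}, going back to \cite{AKSM}), and your reasoning is exactly the expected one.
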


\begin{proposition}[Fusion] \label{Pr:Fus}
Let $(M,P,\Phi)$ be a Hamiltonian quasi-Poisson variety for an action of $G\times G\times H$. 
Write $\Phi=(\Phi_1,\Phi_2,\Phi_H)$ componentwise with $\Phi_H:M\to H$, and $\Phi_j:M\to G$ valued in the $j$-th factor of $G$ for $j=1,2$. 
Then, for the diagonal action of  $G\times H \hookrightarrow G\times G \times H$, $(g,h)\mapsto (g,g,h)$, on $M$, 
the triple $(M,P-\psi_{\mathrm{fus}},(\Phi_1\Phi_2,\Phi_H))$ is a Hamiltonian quasi-Poisson variety for   
\begin{equation}\label{Eq:Fus-Psi}
 \psi_{\mathrm{fus}}:=\frac12 \sum_{a} (E_a,0)_M\wedge (0,E^a)_M\,,  
\end{equation}
where we use dual bases $(E_a)_a$ and $(E^a)_a$ of $\g$ in the infinitesimal action of $\g\times \g$ on $M$. 

Furthermore, assume that $P$ is non-degenerate and denote by $\omega$ the quasi-Hamiltonian $2$-form corresponding to the action of $G\times G\times H$. Then  $P-\psi_{\mathrm{fus}}$ is non-degenerate, and $\omega- \omega_{\mathrm{fus}}$ is the quasi-Hamiltonian $2$-form corresponding to the action of $G\times H$ for 
\begin{equation} \label{Eq:Fus-omeg}
\omega_{\mathrm{fus}}:= \frac12 (\Phi_1^{-1} \dd\Phi_1 \stackrel{\wedge}{,} \dd\Phi_2\, \Phi_2^{-1})_\g\,.
\end{equation} 
\end{proposition}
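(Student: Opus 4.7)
The statement splits into a quasi-Poisson assertion (that $P - \psi_{\mathrm{fus}}$ together with $(\Phi_1\Phi_2, \Phi_H)$ defines a Hamiltonian quasi-Poisson structure for the diagonal action) and a quasi-Hamiltonian assertion (that the corresponding $2$-form is $\omega - \omega_{\mathrm{fus}}$). My overall strategy is to prove them in that order: the first by direct verification of the axioms of Definition \ref{def:HamqP}, the second via Lemma \ref{Lem:Compat}, which bypasses any separate non-degeneracy check. The common technical ingredient is the use of $\Ad$-invariance of $(-,-)_\g$ and of the moment map condition \eqref{Eq:momap} for $P$ to reorganise the cross-terms produced by passing from the pair $(\Phi_1, \Phi_2)$ to the product $\Phi_1\Phi_2$.

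For the first part, $(G\times H)$-invariance of $P - \psi_{\mathrm{fus}}$ is immediate from the $(G\times G\times H)$-invariance of $P$ and the $\Ad$-invariance of the element $\sum_a E_a\otimes E^a\in \g\otimes\g$. The heart of the argument is the Schouten identity $[P - \psi_{\mathrm{fus}}, P - \psi_{\mathrm{fus}}] = \phi_M^{\Delta}$, where $\phi_M^\Delta$ denotes the infinitesimal action of the Cartan trivector for the diagonal $\g\oplus \h$. Expanding and using $[P,P]=\phi_M$ for the product structure on $\g\oplus \g\oplus \h$, I reduce to showing that $-2[P, \psi_{\mathrm{fus}}] + [\psi_{\mathrm{fus}}, \psi_{\mathrm{fus}}]$ reproduces exactly the cross-factor terms $\phi_M^\Delta - \phi_M$. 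Here $[P,\psi_{\mathrm{fus}}]$ is computed by applying \eqref{Eq:momap} to each $(E_a,0)_M$ and $(0,E^a)_M$ appearing in $\psi_{\mathrm{fus}}$, while $[\psi_{\mathrm{fus}}, \psi_{\mathrm{fus}}]$ follows from the Lie bracket of infinitesimal actions; $\Ad$-invariance of $(-,-)_\g$ then reassembles the resulting expressions into the cubic structure constants of $\phi$. For the moment map condition, I verify \eqref{Eq:momap} for $\Phi_1\Phi_2$ by using the differential of multiplication in $G$: the chain rule produces $(\Phi_1\Phi_2)^\ast\mathcal{D}(F)$ as a sum of pullbacks along $\Phi_1$ and $\Phi_2$ plus cross-terms which, when contracted with $P$, are precisely cancelled against the contractions with $-\psi_{\mathrm{fus}}$.

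For the second part, Lemma \ref{Lem:Compat} reduces the task to checking that $\tilde\omega := \omega - \omega_{\mathrm{fus}}$ satisfies \eqref{Eq:B1}, \eqref{Eq:B2}, and the compatibility \eqref{Eq:corrPOm} relative to $P - \psi_{\mathrm{fus}}$ and $(\Phi_1\Phi_2,\Phi_H)$. Condition \eqref{Eq:B1} follows from $\dd\omega = \Phi^\ast \eta$ and the classical identity
\begin{equation*}
(\Phi_1\Phi_2)^\ast \eta \;=\; \Phi_1^\ast \eta + \Phi_2^\ast \eta + \dd\omega_{\mathrm{fus}} ,
\end{equation*}
proved by direct differentiation using the Maurer--Cartan formulas $(\Phi_1\Phi_2)^\ast \theta^L = \Ad_{\Phi_2^{-1}}\Phi_1^\ast\theta^L + \Phi_2^\ast\theta^L$ and $(\Phi_1\Phi_2)^\ast \theta^R = \Phi_1^\ast\theta^R + \Ad_{\Phi_1}\Phi_2^\ast\theta^R$ together with the Cartan structure equation. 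Condition \eqref{Eq:B2} is verified by pairing with a diagonal vector field $(\xi,\xi,0)_M$: the $\Ad$-shifted pieces on the right of the above Maurer--Cartan formulas are precisely matched by $-\omega_{\mathrm{fus}}^\flat(\xi_M^\Delta)$, thanks to $\Ad$-invariance of the trace form. Finally, \eqref{Eq:corrPOm} is a linear-algebraic identity: substituting the explicit formulas for $\psi_{\mathrm{fus}}$ and $\omega_{\mathrm{fus}}$, both sides decompose into contributions matched term by term using the same pullback identities.

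The main obstacle is the bookkeeping in the Schouten computation of Part~1 and, dually, in the compatibility check \eqref{Eq:corrPOm} of Part~2: both involve a proliferation of cross-terms indexed by the two copies of $\g$, and showing that they combine with the Cartan trivector and moment-map contributions to yield the diagonal structure requires careful use of the Jacobi identity in $\g$ together with the symmetry of the structure constants produced by the trace form. Conceptually, these are the classical fusion arguments of Alekseev--Kosmann-Schwarzbach--Meinrenken and Alekseev--Malkin--Meinrenken, so the combinatorics is well-understood and the main work is to keep track of signs and diagonal versus off-diagonal infinitesimal actions.
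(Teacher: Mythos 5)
The paper itself gives no proof of Proposition \ref{Pr:Fus}: it appears in the subsection ``Definitions and standard results'' and is recalled from the classical fusion theorems of Alekseev--Kosmann-Schwarzbach--Meinrenken and Alekseev--Malkin--Meinrenken (adapted to the algebraic setting by Van den Bergh and Yamakawa), so there is no in-paper argument to compare against. Your proposal follows exactly that classical route, and the overall architecture (direct verification of the quasi-Poisson axioms, then Lemma \ref{Lem:Compat} to get the $2$-form and non-degeneracy in one stroke) is the right one. Two concrete points need repair, though.

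First, in Part 1 you propose to evaluate $[P,\psi_{\mathrm{fus}}]$ ``by applying \eqref{Eq:momap} to each $(E_a,0)_M$ and $(0,E^a)_M$.'' The moment map condition computes $P^\sharp$ on exact $1$-forms pulled back from $G$; it says nothing about Schouten brackets with fundamental vector fields, so this step as described does not go through. The correct (and much simpler) observation is that $[P,\xi_M]=L_{\xi_M}P=0$ for every $\xi$ in the Lie algebra, by $G\times G\times H$-invariance of $P$; hence $[P,\psi_{\mathrm{fus}}]=0$ and the entire discrepancy $\phi^\Delta_M-\phi_M$ must be produced by $[\psi_{\mathrm{fus}},\psi_{\mathrm{fus}}]$ alone, which is a purely Lie-algebraic computation in $\wedge^\bullet(\g\oplus\g)$. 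If you insist on a nonzero contribution from $[P,\psi_{\mathrm{fus}}]$ your bookkeeping will not close. Second, in Part 2 the displayed identity should read
\begin{equation*}
(\Phi_1\Phi_2)^\ast \eta \;=\; \Phi_1^\ast \eta + \Phi_2^\ast \eta \,-\, \dd\omega_{\mathrm{fus}}\,,
\end{equation*}
with a minus sign: with the paper's conventions ($\dd\omega=\Phi^\ast\eta$ in \eqref{Eq:B1}, opposite to AMM, cf.\ Remark \ref{Rem:Conv-qH}, and $\omega_{\mathrm{fus}}$ as in \eqref{Eq:Fus-omeg}) a direct computation with $\dd\theta^L=-\theta^L\wedge\theta^L$, $\dd\theta^R=\theta^R\wedge\theta^R$ gives the minus sign, and only then does $\dd(\omega-\omega_{\mathrm{fus}})=(\Phi_1\Phi_2)^\ast\eta+\Phi_H^\ast\eta$ hold. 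As you wrote it, the identity is inconsistent with the conclusion you are proving (it would force the fused form to be $\omega+\omega_{\mathrm{fus}}$). The remaining steps --- the product rule for $\mathcal{D}$ with cross-terms cancelled by $-\psi_{\mathrm{fus}}$, the verification of \eqref{Eq:B2} via the $\Ad$-shifted Maurer--Cartan pullbacks, and the term-by-term check of \eqref{Eq:corrPOm} --- are correctly identified, although the compatibility check is the heaviest computation and deserves more than one sentence in a complete write-up.
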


\subsection{Multiplicative quiver variety}  \label{ss:MQV}

\subsubsection{Quiver representations}

Fix a quiver $Q$ and a dimension vector $\nfat=(n_s)\in \Z_{\geq 0}^{I}$. 
A representation of dimension $\nfat$ of the path algebra $\CC Q$ is an element  $\rho\in \Hom_B(\CC Q,\End(\CC^\nfat))$ such that, for each $s\in I$, $\rho(e_s)$ is the projection on the summand $\CC^{n_s}$ of $\CC^\nfat:=\oplus_{r\in I} \CC^{n_r}$.
Using the convention of \ref{ss:Not}, we see that the matrix $\rho(a)$, $a\in Q$, may only have a nonzero block of size $n_{t(a)}\times n_{h(a)}$ corresponding to a linear map\footnote{We warn the reader that we use Van den Bergh's convention  \cite{VdB1,VdB2} for quiver representations, which is the opposite of the convention taken in \cite{CBShaw,Ya}.} 
$\CC^{n_{h(a)}}\to \CC^{n_{t(a)}}$.
Therefore the space of all representations of dimension $\nfat=(n_s)\in \Z_{\geq 0}^{I}$, denoted $M_Q:=M_Q(\nfat)$, is an affine space $\mathbb{A}^d_\CC$ of dimension $d:=\sum_{a\in Q} n_{t(a)}n_{h(a)}$. This turns $M_Q$ into an affine variety.
Rather than working with points as representations of $\CC Q$, we view a point $\Xtt\in M_Q$ as being parametrized as $\Xtt=(\Xtt_a)_{a\in Q}$ with $\Xtt_a:=\rho(a)$ for $\rho$ the corresponding representation.

There is a natural left action of $\Gl_{\nfat}:=\prod_{s\in I}\Gl_{n_s}(\CC)$ on $M_Q$ given by the morphism
\begin{equation} \label{Eq:Act-Gln}
 \Gl_{\nfat}\times M_Q \to M_Q, \quad (g=(g_s)_{s\in I}, \Xtt) \mapsto g\cdot \Xtt:= (g_{t(a)}\Xtt_a g^{-1}_{h(a)})_{a\in Q}\,.
\end{equation}
This induces a left action on functions given by $(g\cdot f)(\Xtt)=f(g^{-1}\cdot \Xtt)$ for any function $f$ on $M_Q$.
Abusing notation, we introduce for each $a\in Q$ the morphism $\Xtt_a:M_Q\to \End(\CC^\nfat)$ returning the matrix representing $a\in Q$ at each point.
The $\Gl_{\nfat}$-action can then be written as $g\cdot \Xtt_a=g^{-1}\Xtt_a g$.
Differentiating this, we get an infinitesimal action of $\gl_{\nfat}:=\prod_{s\in I}\gl_{n_s}(\CC)$ on functions that satisfies $\xi_{M_Q}(\Xtt_a)=[\Xtt_a,\xi]$, cf. \eqref{EqinfVectM}.

\medskip

Take a stability parameter $\theta:=(\theta_s)\in \Q^I$ such that $\theta\cdot \nfat:=\sum_{s\in I} \theta_s n_s = 0$. 
We say that $\Xtt\in M_Q$ is $\theta$-semistable (resp. $\theta$-stable) if $\theta\cdot \dim(V)\leq 0$ (resp. $\theta\cdot \dim(V)< 0$) for any nonempty vector subspace $V=\oplus_{s\in I} V_s \subsetneq \CC^\nfat$ such that $\Xtt_a(V_{h(a)})\subset V_{t(a)}$. We let 
\begin{equation}
\begin{aligned}
  M_Q^{\theta.s}:=\{\Xtt \in M_Q \mid \Xtt \text{ is } \theta\text{-stable}\}\,, \qquad
   M_Q^{\theta.ss}:=\{\Xtt \in M_Q \mid \Xtt \text{ is } \theta\text{-semistable}\}\,. 
\end{aligned}
\end{equation}

\subsubsection{Definition} 

Introduce the smooth open affine subvariety 
$M_{\overline{Q}}^\circ:=\{\Xtt \mid \det (\Id+\Xtt_a \Xtt_{a^\ast}) \neq 0 \, \forall a \in \overline{Q}\}\subset M_{\overline{Q}}$. 
Fix a total ordering $<$ on $\overline{Q}$.
We reproduce from \eqref{Eq:Intro1} the $\Gl_{\nfat}$-equivariant morphism 
\begin{equation} \label{Eq:momapMQV}
\Phi:M_{\overline{Q}}^\circ\to \Gl_{\nfat},\quad 
\Phi(\Xtt):=\prod_{a \in \overline{Q}} (\Id+\Xtt_a \Xtt_{a^\ast})^{\epsilon(a)}\,,
\end{equation}
where factors are taken from left to right with respect to the ordering $<$ on the arrows.
Similarly, set
$\Phi_a:=\prod_{b<a} (\Id+\Xtt_b \Xtt_{b^\ast})^{\epsilon(b)}$ for all $a\in \overline{Q}$. 
Finally, fix $\qfat:=(q_s)\in (\CC^\times)^I$ and denote in the same way the central element $(q_s\Id_{n_s})_s\in \Gl_{\nfat}$.
Note that $\Phi^{-1}(\qfat)$ is a $\Gl_{\nfat}$-invariant closed subvariety of $M_{\overline{Q}}^\circ$, which is empty if $\qfat^\nfat:=\prod_{s\in I} q_s^{n_s}\neq 1$ because $\det \circ \,\Phi= 1$. 
This subvariety parametrizes representation of the multiplicative preprojective algebra $\Lambda^{\qfat}(Q)$, cf.~\cite{CBShaw}. 
We are in position to define the main objects at stake, see \cite[\S2.2]{Ya} for the relevant geometric invariant theory of quivers. 

\begin{definition}[\cite{CBShaw,Ya}] \label{Def:MQV}
The multiplicative quiver variety associated with $\qfat$ and stability parameter $\theta$ is the good quotient 
\begin{equation}
 \Mcal_{\overline{Q},\qfat,\theta}:=(M_{\overline{Q}}^{\theta.ss}\cap \Phi^{-1}(\qfat))/\!/ \Gl_{\nfat}\,.
\end{equation}
Its smooth locus is given by the geometric quotient $\Mcal_{\overline{Q},\qfat,\theta}^s:=(M_{\overline{Q}}^{\theta.s}\cap \Phi^{-1}(\qfat))/ \Gl_{\nfat}$. 
\end{definition}

\subsubsection{(Quasi-)Poisson structure and corresponding \texorpdfstring{$2$-}{2-}form}

Set $\partial_a\in \mathcal{X}^1(M_{\overline{Q}},\gl_\nfat)$ with $(i,j)$ entry given by the vector field   
$(\partial_a)_{ij}:=\partial/\partial (\Xtt_a)_{ji}$, i.e. 
\begin{equation*}
((\partial_a)_{ij}f)(\Xtt)=\left.\frac{d}{dt}\right|_{t=0} f((\Xtt_b+t \delta_{ab} E_{ji})_{b\in \overline{Q}}) \,,
\end{equation*}
for any function $f$ around $\Xtt\in M_{\overline{Q}}$. 
In particular, as a matrix $\partial_a$ only consists of a nonzero block of size $n_{h(a)}\times n_{t(a)}$, and $\langle (\partial_a)_{ij} , \dd(\Xtt_a)_{kl} \rangle=\delta_{jk} \delta_{il}$ with indices $i,j,k,l$ taken with respect to the nonzero blocks of $\partial_a$ and $\Xtt_a$. 
Note that the infinitesimal action of $\xi \in \gl_{\nfat}$ on $M_{\overline{Q}}$ can be written as the vector field
\begin{equation} \label{Eq:Act-inf}
 \xi_{M_{\overline{Q}}} = \sum_{a\in \overline{Q}}\tr((\partial_{a} \Xtt_{a} -  \Xtt_a \partial_a)\xi)
 =\sum_{a\in \overline{Q}}\tr((\partial_{a^\ast} \Xtt_{a^\ast} -  \Xtt_a \partial_a)\xi) \,.
\end{equation}

\begin{theorem}[\cite{VdB1,VdB2,Ya}]  \label{Thm:qPqHam-MQV}
The smooth complex variety $M_{\overline{Q}}^\circ$ is endowed with a structure of Hamiltonian quasi-Poisson variety for 
the moment map $\Phi$ \eqref{Eq:momapMQV} and the quasi-Poisson bivector 
\begin{equation} \label{Eq:qP-MQV}
\begin{aligned}
 P:=&\, \frac12 \sum_{a\in \overline{Q}} \epsilon(a)\, 
 \tr\left[(\Id+\Xtt_{a^\ast}\Xtt_a) \, \partial_a \wedge \partial_{a^\ast} \right]  \\
& -\frac12 \sum_{\substack{a<b \\ a,b\in \overline{Q}}} 
 \tr\left[(\partial_{a^\ast} \Xtt_{a^\ast} - \Xtt_{a}\, \partial_a) \wedge (\partial_{b^\ast} \Xtt_{b^\ast} - \Xtt_{b} \, \partial_b) \right] \,. 
\end{aligned}
\end{equation} 
Moreover, $P$ is non-degenerate, with corresponding $2$-form 
\begin{equation} \label{Eq:omega-MQV}
\begin{aligned}
  \omega:=&\, -\frac12 \sum_{a\in \overline{Q}} \epsilon(a)\, 
 \tr\left[(\Id+\Xtt_a \Xtt_{a^\ast})^{-1} \, \dd\Xtt_a \wedge \dd\Xtt_{a^\ast} \right]  \\
& -\frac12 \sum_{ a\in \overline{Q}} 
 \tr\left[ \Phi_a^{-1} \, \dd\Phi_a \wedge \dd(\Id+\Xtt_a \Xtt_{a^\ast})^{\epsilon(a)}\,\, (\Id+\Xtt_a \Xtt_{a^\ast})^{-\epsilon(a)}  \right] \,,
\end{aligned}
\end{equation} 
that makes $(M_{\overline{Q}}^\circ,\omega,\Phi)$ a quasi-Hamiltonian variety. 
The structures are independent of the choice of ordering or the orientation of arrows, up to isomorphism. 
\end{theorem}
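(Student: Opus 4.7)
The plan is to build the quasi-Hamiltonian structure (and hence, via Proposition~\ref{Pr:Corr}, the Hamiltonian quasi-Poisson structure) one arrow at a time, and to glue the elementary pieces together using the fusion procedure of Proposition~\ref{Pr:Fus}. For a single arrow $a\in Q$, I would start with the representation space of the two-arrow sub-quiver $\{a,a^\ast\}$ of $\overline{Q}$, restricted to the open locus $U_a$ defined by $\det(\Id+\Xtt_a\Xtt_{a^\ast})\neq 0$ and equipped with the $\Gl_{n_{t(a)}}\times\Gl_{n_{h(a)}}$-action, together with the candidate elementary structure $(P_a,\omega_a,\Phi_a)$ obtained by extracting the single-arrow summands of~\eqref{Eq:qP-MQV} and~\eqref{Eq:omega-MQV} and the two-component moment map $\Xtt\mapsto((\Id+\Xtt_a\Xtt_{a^\ast})^{\epsilon(a)},(\Id+\Xtt_{a^\ast}\Xtt_a)^{\epsilon(a^\ast)})$. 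The elementary claim is that this triple is a quasi-Hamiltonian variety, which reduces to finite matrix identities: $[P_a,P_a]=\phi_{U_a}$ for the Schouten bracket, the moment-map equation~\eqref{Eq:momap}, and the axioms~\eqref{Eq:B1}--\eqref{Eq:B2}; once the compatibility~\eqref{Eq:corrPOm} between $P_a$ and $\omega_a$ is verified, Lemma~\ref{Lem:Compat} delivers~\eqref{Eq:B3} and the non-degeneracy of $P_a$ for free.

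Next, I would assemble the global structure by iterated fusion. As a $\prod_{a\in Q}(\Gl_{n_{t(a)}}\times\Gl_{n_{h(a)}})$-variety, $M_{\overline{Q}}^\circ$ is the external product $\prod_{a\in Q} U_a$, and the product quasi-Hamiltonian structure has bivector $\sum_a P_a$, $2$-form $\sum_a\omega_a$, and moment map the tuple of the $2|Q|$ elementary moment maps. Passage to the diagonal $\Gl_\nfat\hookrightarrow \prod_{a\in Q}(\Gl_{n_{t(a)}}\times\Gl_{n_{h(a)}})$ is achieved by applying Proposition~\ref{Pr:Fus} repeatedly, following the total order $<$ on $\overline{Q}$: each fusion combines two $\Gl_{n_s}$-factors attached to a common vertex $s$, multiplies the corresponding moment-map entries in the prescribed order, subtracts $\psi_{\mathrm{fus}}$ from~\eqref{Eq:Fus-Psi} from the bivector and $\omega_{\mathrm{fus}}$ from~\eqref{Eq:Fus-omeg} from the $2$-form. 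At the end of this procedure the composite moment map is exactly $\Phi$ of~\eqref{Eq:momapMQV}. Rewriting the infinitesimal actions with the aid of~\eqref{Eq:Act-inf} converts the accumulated $\psi_{\mathrm{fus}}$ contributions into the cross-term (second) sum in~\eqref{Eq:qP-MQV}, while a telescoping computation using the definition of $\Phi_a$ converts the accumulated $\omega_{\mathrm{fus}}$ contributions into the cross-term (second) sum in~\eqref{Eq:omega-MQV}. Non-degeneracy of $P$ and the quasi-Hamiltonian axioms for $\omega$ are preserved under each fusion step and hence descend to the full structure.

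Independence of the chosen ordering will then follow because adjacent transpositions in the total order on $\overline{Q}$ induce $\Gl_\nfat$-equivariant isomorphisms of the underlying Hamiltonian quasi-Poisson variety, in the spirit of the associativity of fusion already used in~\cite{AKSM,VdB1}; independence of the orientation $\epsilon$ reduces to the observation that the exchange $\Xtt_a\leftrightarrow\Xtt_{a^\ast}$ preserves each elementary pair $(P_a,\omega_a)$ once the compensating sign flip $\epsilon(a)\mapsto -\epsilon(a)$ is tracked. The main obstacle lies in the single-arrow verification of the first step: although each identity is a finite matrix computation, the factors $(\Id+\Xtt_a\Xtt_{a^\ast})^{\pm 1}$ appearing in $P_a$ and $\omega_a$, together with the Cartan trivector $\phi$ on $\Gl_{n_{t(a)}}\times\Gl_{n_{h(a)}}$ entering $[P_a,P_a]=\phi_{U_a}$, make the bookkeeping delicate. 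The efficient route, which I would follow, is to prove~\eqref{Eq:corrPOm} directly using the identity $(\Id+\Xtt_a\Xtt_{a^\ast})\Xtt_a=\Xtt_a(\Id+\Xtt_{a^\ast}\Xtt_a)$ and then to deduce the remaining $\omega_a$-axioms via Lemma~\ref{Lem:Compat}, thereby cutting down the number of independent computations needed.
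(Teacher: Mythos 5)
Your proposal is correct and follows essentially the same route as the paper: the paper (in Remark~\ref{Rem:MQV-fusion} and Appendix~\ref{App:Corr}, deferring the elementary computations to \cite{VdB1,VdB2,Ya}) likewise reduces to the totally separated (single-arrow) case, verifies the compatibility \eqref{Eq:corrPOm} there so that Lemma~\ref{Lem:Compat} yields \eqref{Eq:B3} and non-degeneracy, and then assembles the global structure by iterated fusion via Proposition~\ref{Pr:Fus}. The only caveat is notational: your elementary moment map should not be denoted $\Phi_a$, which the paper reserves for the partial product $\prod_{b<a}(\Id+\Xtt_b\Xtt_{b^\ast})^{\epsilon(b)}$.
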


\begin{corollary}[\cite{VdB1,VdB2,Ya}]  \label{Cor:qPqHam-redMQV}
The multiplicative quiver variety $\Mcal_{\overline{Q},\qfat,\theta}$, if not empty, is a Poisson variety with Poisson bivector given by \eqref{Eq:qP-MQV}. The Poisson structure is non-degenerate on $\Mcal_{\overline{Q},\qfat,\theta}^s$ where the corresponding symplectic form is given by \eqref{Eq:omega-MQV}. 
\end{corollary}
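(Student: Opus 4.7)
The plan is to deduce the corollary directly from Theorem~\ref{Thm:qPqHam-MQV} by applying the standard quasi-Poisson and quasi-Hamiltonian reduction procedures (cf.~\cite{AKSM,AMM,VdB1,VdB2,Ya}) to the Hamiltonian quasi-Poisson / quasi-Hamiltonian variety $(M_{\overline{Q}}^\circ, P, \omega, \Phi)$. The crucial enabling fact is that $\qfat = (q_s \Id_{n_s})_s$ lies in the center of $\Gl_{\nfat}$, so its conjugacy class is a single point and the whole group $\Gl_{\nfat}$ acts on the fiber $\Phi^{-1}(\qfat)$; there is no need to first quotient by a stabilizer as in reduction at a non-central value. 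Since the theorem already provides both structures and their compatibility via Proposition~\ref{Pr:Corr}, the corollary will follow essentially formally.

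For the Poisson structure, I would argue as follows. The moment map condition \eqref{Eq:momap}, applied to any $\Gl_{\nfat}$-invariant function $F$ on $\Gl_{\nfat}$, yields that $P^\sharp(\dd\Phi^\ast F) = (\Phi^\ast \mathcal{D}(F))_{M_{\overline{Q}}^\circ}$ annihilates $\Gl_{\nfat}$-invariant functions on $M_{\overline{Q}}^\circ$. Taking $F$ to be a function vanishing at $\qfat$ and combining with $\Gl_{\nfat}$-equivariance of $P$, one deduces that the bracket on $\CC[M_{\overline{Q}}^\circ]^{\Gl_{\nfat}}$ preserves the ideal of functions vanishing on $\Phi^{-1}(\qfat)$. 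Hence it descends to a Poisson bracket on $\CC[\Phi^{-1}(\qfat)\cap M_{\overline{Q}}^{\theta.ss}]^{\Gl_{\nfat}}$, i.e.\ on $\Mcal_{\overline{Q},\qfat,\theta}$. To handle a non-trivial stability parameter $\theta$ one covers the semistable locus by $\Gl_{\nfat}$-invariant affines obtained by inverting $\theta$-semi-invariant sections and observes that the local reduced brackets glue, since they all arise by restriction from the same global bracket on $M_{\overline{Q}}^\circ$; this is the GIT-compatible Poisson reduction used implicitly in \cite{Ya}.

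For the non-degeneracy on the stable locus, I would invoke quasi-Hamiltonian reduction. On $\Phi^{-1}(\qfat)$ the pullback $\Phi^\ast \eta$ is zero because $\Phi$ is constant there, so \eqref{Eq:B1} forces $\omega|_{\Phi^{-1}(\qfat)}$ to be closed; \eqref{Eq:B2} identifies its kernel (modulo tangent vectors to the fiber) with the tangent spaces to $\Gl_{\nfat}$-orbits; and on $M_{\overline{Q}}^{\theta.s}$ the action of $\Gl_{\nfat}/Z$ is free (the central torus $Z$ acting trivially by the constraint $\qfat^\nfat=1$). Combining these inputs with \eqref{Eq:B3}, the standard reduction theorem yields a symplectic $2$-form on $\Mcal^s_{\overline{Q},\qfat,\theta}$ induced by $\omega$. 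The compatibility with the reduced Poisson bracket is automatic: the identity \eqref{Eq:corrPOm} from Proposition~\ref{Pr:Corr} is preserved by restriction and descent, so the reduced symplectic form is the inverse of the reduced Poisson bivector on the stable locus.

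The only genuinely non-formal point is the interaction with the GIT quotient when $\theta \neq 0$, namely checking that the quasi-Poisson bracket is compatible with the graded ring of $\theta$-semi-invariants used to construct $\Mcal_{\overline{Q},\qfat,\theta}$. This reduces to verifying that the Hamiltonian vector field of a $\Gl_{\nfat}$-semi-invariant function of weight $\chi$ is itself a derivation of weight $\chi$ modulo the moment map ideal, which follows from equivariance of $P$ and the moment map identity \eqref{Eq:momap}; I would regard this as the main technical obstacle to make fully rigorous, but it is well-known to specialists and treated in the references above.
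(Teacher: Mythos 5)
The paper gives no independent proof of this corollary: it is quoted from \cite{VdB1,VdB2,Ya} and rests on exactly the standard quasi-Poisson/quasi-Hamiltonian reduction you describe, so your route is the intended one and is essentially correct. One step is skipped, though, and it is the crux of why quasi-Poisson reduction produces a genuine \emph{Poisson} bracket: you show that the bracket of invariant functions preserves the ideal of the fibre, but you never address the Jacobi identity. Since $[P,P]=\phi_{M}$ is generally nonzero, Jacobi holds on $\CC[M_{\overline{Q}}^\circ]^{\Gl_{\nfat}}$ only because $\phi_{M}$ is built from the infinitesimal vector fields $\xi_{M}$, each of which annihilates invariant functions, so the trivector vanishes when evaluated on triples of invariants; this one-line observation should be stated explicitly before you can say the bracket ``descends to a Poisson bracket.'' Two minor points: the kernel of the ideal-preservation argument also needs the (standard, characteristic-zero) remark that a derivation preserving an ideal preserves its radical, so that one may pass to the reduced fibre; and your parenthetical that the central torus acts trivially ``by the constraint $\qfat^{\nfat}=1$'' conflates two things --- the diagonal $\CC^\times\subset\Gl_{\nfat}$ acts trivially on $M_{\overline{Q}}$ unconditionally (it is the generic stabilizer of a stable point by Schur's lemma), whereas $\qfat^{\nfat}=1$ is only the non-emptiness condition for $\Phi^{-1}(\qfat)$.
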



\begin{remark}
 Let us emphasize that Yamakawa \cite{Ya} uses a convention equivalent to writing paths from right to left in $\CC\overline{Q}$ and follows the definition of quasi-Hamiltonian space from \cite{AMM} (cf. Remark \ref{Rem:Conv-qH}). This explains the minor difference between \eqref{Eq:omega-MQV} and the $2$-form  in \cite[Prop. 3.2]{Ya}.
\end{remark}

\begin{remark} \label{Rem:MQV-fusion}
 Recall that Theorem \ref{Thm:qPqHam-MQV} is obtained by applying the method of fusion \cite{AKSM,AMM}. 
Therefore it suffices to prove the result for $M_{\overline{Q}}^\circ$ seen as a $\prod_{a\in \overline{Q}}\Gl_{n_{t(a)}}$-variety where for each $a\in \overline{Q}$, the action of the corresponding subgroup is by 
$\Xtt \mapsto g_{(a)}\cdot \Xtt:=(g_{(a)} \Xtt_a, \Xtt_{a^\ast} g_{(a)}^{-1},\Xtt_b)_{b\neq a,a^\ast}$. 
In that case only the first sum in \eqref{Eq:qP-MQV} and \eqref{Eq:omega-MQV} is considered, while the moment map consists of the morphisms  
$\Phi_{(a)}:M_{\overline{Q}}^\circ \to \Gl_{n_{t(a)}}$, $\Xtt\mapsto (\Id+ \Xtt_a \Xtt_{a^\ast})^{\epsilon(a)}$. 
This amounts to proving the result for a totally separated quiver, cf. \cite[\S6.7]{VdB1} and \cite{VdB2,Ya}. 
\end{remark}


\section{The (quasi-)Poisson pencil and its variants} \label{S:Pencil}

We introduce the notion of a quasi-Poisson pencil and derive some elementary properties. 
We construct such pencils in the presence of $\CC^\times$-actions and we apply this formalism to the open representation spaces $M_{\overline{Q}}^\circ$ that lead to multiplicative quiver varieties, as well as related spaces.  

\subsection{General framework} \label{s:PenGen}

Recall that on a variety $M$, a family of Poisson bivectors $(P_j)_{j\in J}$ forms a \emph{Poisson pencil} if the linear combination $\sum_{j\in J_0} c_j P_j$ is a Poisson bivector for any finite subset $J_0\subset J$ and  $(c_j)_j\in \CC^{J_0}$. 
The pencil has \emph{order} $r$ if $r\geq 1$ is the largest integer for which there exist $j_1,\ldots,j_r\in J$ and a point $x\in M$ such that $P_{j_1,x},\ldots,P_{j_r,x} \in \bigwedge^2 T_xM$ are independent. 
Pencils of order $r=2$ play a central role in the bihamiltonian approach to integrability \cite{KSM}.  

\begin{definition} \label{def:Pencil}
Assume that $M$ is a $G$-variety and let $P_0$ be a quasi-Poisson bivector on $M$. 
A family of bivectors $(P_j)_{j\in J}$ forms a \emph{quasi-Poisson pencil centered at} $P_0$ if, for any finite subset $J_0\subset J$ and $(c_j)_j\in \CC^{J_0}$, the linear combination $P_0+\sum_{j\in J_0} c_j P_j$ is also a quasi-Poisson bivector. 
The pencil has order $r$ if $r\geq 1$ is the largest integer for which there exist $j_1,\ldots,j_r\in J$ and a point $x\in M$ such that\footnote{As opposed to \cite{FF23}, we do not require the $r$ bivectors to be independent from $P_{0,x}$ at $x\in M$. This last independence condition may fail in some cases, although it is automatic when the induced trivector $\phi_{M,x}$ is nonzero due to part 1 of Lemma \ref{Lem:AltPenc}.} $P_{j_1,x},\ldots,P_{j_r,x} \in \bigwedge^2 T_xM$ are independent. 

\noindent If there exists a morphism $\Phi:M\to G$ which is a moment map for any quasi-Poisson bivector $P_0+\sum_{j\in J_0} c_j P_j$ as above, we say that the pencil is \emph{Hamiltonian with moment map} $\Phi$. 
\end{definition}

\subsubsection{First properties} 
Without loss of generality, we assume that the index set $J$ is finite. 
The bivector fields in a quasi-Poisson pencil are far from being arbitrary. 
\begin{lemma} \label{Lem:AltPenc}
For a $G$-variety $M$, 
Let $(P_j)_{j\in J}$ be a finite family of bivectors on $M$, and $P_0$ be a quasi-Poisson bivector on $M$. 
Then:
\begin{enumerate}
 \item $(P_j)_{j\in J}$ defines a quasi-Poisson pencil of order $r$ centered at $P_0$ if and only if 
 $(P_j)_{j\in J}$ defines a $G$-invariant Poisson pencil of order $r$ and, for any $j\in J$, $P_j$  satisfies $[P_0,P_j]=0$. 
\item When $P_0$ is Hamiltonian for the moment map $\Phi:M\to G$, the quasi-Poisson pencil is Hamiltonian for $\Phi$ if and only if $P_j^\sharp(\dd(\Phi^\ast F))=0$ for any $j\in J$ and function $F$ on $G$. 
\end{enumerate}
\end{lemma}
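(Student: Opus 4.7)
The plan is to expand both sides of the quasi-Poisson condition using bilinearity of the Schouten--Nijenhuis bracket, and to use the analogous bilinearity of the sharp map for part (2). Write a generic element of the pencil as $P_c:=P_0+\sum_{j\in J_0} c_j P_j$, for any finite $J_0\subset J$ and $(c_j)\in \CC^{J_0}$.

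For part (1), $G$-invariance of $P_c$ for every $(c_j)$ is equivalent, by varying one coefficient at a time, to $G$-invariance of each $P_j$ separately, since $P_0$ is already $G$-invariant. Expanding the Schouten square,
\[ [P_c,P_c]=[P_0,P_0]+2\sum_{j\in J_0} c_j\,[P_0,P_j]+\sum_{j,k\in J_0} c_j c_k\, [P_j,P_k], \]
and using $[P_0,P_0]=\phi_M$, the quasi-Poisson identity $[P_c,P_c]=\phi_M$ becomes the polynomial identity
\[ 2\sum_{j\in J_0} c_j\,[P_0,P_j]+\sum_{j,k\in J_0} c_j c_k\, [P_j,P_k]=0 \]
in the indeterminates $(c_j)$. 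Since $\CC$ has characteristic zero, matching coefficients monomial-by-monomial is equivalent to $[P_0,P_j]=0$ for every $j$ together with $[P_j,P_k]=0$ for every $j,k$; the latter (taking $j=k$ gives $[P_j,P_j]=0$, the off-diagonal cases give Schouten-commutativity) is precisely the statement that $(P_j)_{j\in J}$ forms a Poisson pencil. The order condition carries over verbatim between the two notions, since it is a pointwise linear-independence statement about the same collection of bivectors. The converse direction is immediate by reversing the expansion.

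For part (2), bilinearity of the sharp map gives
\[ P_c^\sharp(\dd(\Phi^\ast F))=P_0^\sharp(\dd(\Phi^\ast F))+\sum_{j\in J_0} c_j\,P_j^\sharp(\dd(\Phi^\ast F)). \]
Since $P_0$ admits $\Phi$ as moment map, $P_0^\sharp(\dd(\Phi^\ast F))=(\Phi^\ast\mathcal{D}(F))_M$ by \eqref{Eq:momap}, so the moment map identity for $P_c$ is equivalent to $\sum_{j\in J_0} c_j\,P_j^\sharp(\dd(\Phi^\ast F))=0$ for every $(c_j)$ and every function $F$ on $G$; extracting coefficients gives $P_j^\sharp(\dd(\Phi^\ast F))=0$ for every $j$ and every such $F$. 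The $G$-equivariance of $\Phi$, which is also part of the moment map axiom, is inherited from the hypothesis on $P_0$ and does not need to be re-checked.

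Since the argument is just a polarization identity together with bilinearity of $P\mapsto P^\sharp$, no single step is a genuine obstacle. The only point to be careful about is the passage from a polynomial identity in the $(c_j)$ to the term-by-term vanishing of its coefficients, which uses the characteristic-zero hypothesis; this is automatic here.
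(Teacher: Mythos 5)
Your proof is correct and takes essentially the same route as the paper: both arguments polarize the quadratic identity $[P_c,P_c]=\phi_M$ in the coefficients $(c_j)$ to extract $[P_0,P_j]=0$ and the Poisson-pencil condition, and part (2) is the same linearity argument for $P^\sharp$. The only cosmetic difference is that you match all monomial coefficients at once, while the paper first reduces to $[P(\zbar),P(\zbar)]=-2[P_0,P(\zbar)]$ and then specializes one parameter at a time; the content is identical.
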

\begin{proof}
For part (1), assume that the pencil is quasi-Poisson. Given parameters $\zbar:=(z_j)_j\in \CC^J$, 
write $P(\zbar)=\sum_{j\in J} z_j P_j$. 
Taking the Schouten bracket of the bivector $P(\zbar)$ with itself yields 
\begin{equation} \label{Eq:Pf-penc}
[P(\zbar),P(\zbar)]=[P_0+P(\zbar),P_0+P(\zbar)] - 2 [P_0,P(\zbar)] - [P_0,P_0] = - 2 [P_0,P(\zbar)]\,, 
\end{equation}
since $P_0$ and $P_0+P(\zbar)$ are quasi-Poisson. 
Fix $j\in J$, and take $z_k=0$ for all $k\neq j$ so that \eqref{Eq:Pf-penc} yields 
$z_j^2 [P_j,P_j]=-2z_j [P_0,P_j]$; varying $z_j\in \CC$ implies that $[P_0,P_j]=0$. Thus  \eqref{Eq:Pf-penc} is identically zero and each bivector $P(\zbar)$ is Poisson. 
By definition, quasi-Poisson bivectors are $G$-invariant, so that $P_j=(P_0+P_j)-P_0$ is itself $G$-invariant for any $j\in J$.  
The fact that the order is the same is direct from the definition.  
The converse is proved similarly.

For part (2), assume that $\Phi$ is a moment map for the whole pencil. Then for any $j\in J$,
\begin{equation} \label{Eq:Pf-penc-2}
P_j^\sharp(\dd(\Phi^\ast F))=(P_0+P_j)^\sharp(\dd(\Phi^\ast F)) - P_0^\sharp(\dd(\Phi^\ast F))=0\,, 
\end{equation}
as both terms cancel out since they satisfy \eqref{Eq:momap}. The converse is proved similarly.  
\end{proof}

Fusion as in Proposition \ref{Pr:Fus} is compatible with pencils in the following way. 
\begin{lemma}  \label{Lem:FusPenc}
 Let $(M,P_0,\Phi)$ be a Hamiltonian quasi-Poisson variety for an action of $G\times G\times H$. 
Write $\Phi=(\Phi_1,\Phi_2,\Phi_H)$ componentwise with $\Phi_H:M\to H$ and $\Phi_i:M\to G$ valued in the $i$-th factor of $G$ for $i=1,2$. Then:
\begin{enumerate}
 \item Assume that $(P_j)_{j\in J}$ defines a Hamiltonian quasi-Poisson pencil centered at $P_0$ with moment map $\Phi$. 
 Then, after fusion, it defines a Hamiltonian quasi-Poisson pencil centered at $P_0-\psi_{\mathrm{fus}}$ with moment map $(\Phi_1\Phi_2,\Phi_H)$, where $\psi_{\mathrm{fus}}$ is given by \eqref{Eq:Fus-Psi}.  
 \item For $\zbar:=(z_j)_j\in \CC^J$, assume that before fusion $Q(\zbar):=P_0+\sum_j z_j P_j$ is non-degenerate with corresponding quasi-Hamiltonian $2$-form $\omega(\zbar)$. 
 Then $Q(\zbar)-\psi_{\mathrm{fus}}$ is non-degenerate with corresponding  quasi-Hamiltonian $2$-form $\omega(\zbar)-\omega_{\mathrm{fus}}$ where $\omega_{\mathrm{fus}}$ is given by \eqref{Eq:Fus-omeg}. 
 \item The order of the quasi-Poisson pencil stays the same after fusion. 
\end{enumerate}
\end{lemma}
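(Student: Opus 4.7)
The overall strategy is to apply Proposition \ref{Pr:Fus} elementwise to each bivector in the family $\{P_0 + \sum_j z_j P_j : \zbar \in \CC^J\}$, and then exploit the linearity of the fusion correction in $P_0$.

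For part (1), fix $\zbar = (z_j)_j \in \CC^J$ and set $P(\zbar) := P_0 + \sum_j z_j P_j$. The hypothesis that $(P_j)_{j\in J}$ forms a Hamiltonian quasi-Poisson pencil centered at $P_0$ with moment map $\Phi$ means that $P(\zbar)$ is itself a Hamiltonian quasi-Poisson bivector on $M$ for the $G \times G \times H$-action, with moment map $\Phi$. Applying Proposition \ref{Pr:Fus} to $P(\zbar)$ shows that the bivector
\begin{equation*}
P(\zbar) - \psi_{\mathrm{fus}} = (P_0 - \psi_{\mathrm{fus}}) + \sum_j z_j P_j
\end{equation*}
is a Hamiltonian quasi-Poisson bivector for the diagonal $G \times H$-action, with moment map $(\Phi_1 \Phi_2, \Phi_H)$. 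As this holds for every $\zbar \in \CC^J$, the family $(P_j)_{j \in J}$ defines a quasi-Poisson pencil centered at $P_0 - \psi_{\mathrm{fus}}$, and it is Hamiltonian with moment map $(\Phi_1 \Phi_2, \Phi_H)$ since this map serves as a moment map for every element of the pencil.

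Part (2) is then immediate from the second part of Proposition \ref{Pr:Fus}: by hypothesis $Q(\zbar)$ is non-degenerate with corresponding quasi-Hamiltonian $2$-form $\omega(\zbar)$, so the cited proposition asserts that $Q(\zbar) - \psi_{\mathrm{fus}}$ is non-degenerate with quasi-Hamiltonian $2$-form $\omega(\zbar) - \omega_{\mathrm{fus}}$. For part (3), the order is determined solely by the linear span of $\{P_{j,x} : j \in J\} \subset \bigwedge^2 T_x M$ as $x$ ranges over $M$; since fusion leaves the deformation bivectors $P_j$ unchanged and only shifts the center by the fixed bivector $-\psi_{\mathrm{fus}}$, this span, and hence the order, is preserved.

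There is no substantial obstacle: the argument is bookkeeping resting on the fact that the fusion correction $-\psi_{\mathrm{fus}}$ is independent of the parameters $z_j$, so the linear structure of the pencil is preserved verbatim. One may note in passing that the compatibility $[P_0 - \psi_{\mathrm{fus}}, P_j] = 0$ for each $j \in J$ — required by Lemma \ref{Lem:AltPenc}(1) for the fused pencil — follows automatically from the quasi-Poisson property of the family $\{P(\zbar) - \psi_{\mathrm{fus}}\}_{\zbar}$ combined with that same lemma, and a similar remark covers the moment map condition via Lemma \ref{Lem:AltPenc}(2).
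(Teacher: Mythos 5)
Your proposal is correct and follows essentially the same route as the paper: apply Proposition \ref{Pr:Fus} to each fixed bivector $Q(\zbar)=P_0+\sum_j z_j P_j$ (and to its $2$-form for part (2)), then observe that the order depends only on the deformation bivectors $P_j$, which are untouched by fusion. The paper phrases part (3) via the characterization in Lemma \ref{Lem:AltPenc}(1), but this amounts to the same observation you make directly from the definition of the order.
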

\begin{proof}
For fixed $\zbar:=(z_j)_j\in \CC^J$ and $Q(\zbar):=P_0+\sum_j z_j P_j$, any $Q(\zbar)-\psi_{\mathrm{fus}}$ is a Hamiltonian quasi-Poisson bivector for the moment map $(\Phi_1\Phi_2,\Phi_H)$ due to Proposition \ref{Pr:Fus}. Part (1) directly follows. 
For part (2), we make the same reasoning with the $2$-form and apply  Proposition \ref{Pr:Fus} again. 
For part (3), it suffices to use the alternative characterization of a quasi-Poisson pencil given by item (1) in Lemma \ref{Lem:AltPenc}. 
\end{proof}

Finally, we make some observations regarding quasi-Poisson reduction. 
Given a quasi-Poisson pencil as in Definition \ref{def:Pencil},  the family $(P_0,P_j)_{j\in J}$ defines a Poisson pencil on $M/\!\!/G$ of order\footnote{By abusing terminology, we allow the order to be zero; this occurs e.g. if $M/\!\!/G$ is $0$-dimensional.} $r'\leq r+1$. 
In the Hamiltonian case, the family $(P_0,P_j)_{j\in J}$ defines a Poisson pencil of order $r'\leq r+1$ on any reduction $\Phi^{-1}(\mathcal{C})/\!\!/G$ (where $\mathcal{C}$ is the closure of a conjugacy class in $G$); this is also true in the presence of a stability parameter.

\subsubsection{Construction using an abelian group action}

We start by adapting to the algebraic setting an observation for constructing compatible quasi-Poisson bivectors made in \cite[\S3.2]{FF23}. 

Let $M$ be a variety with an action of $G$ as in \ref{ss:Setup}.
Assume that $(M,P,\Phi)$ is a Hamiltonian quasi-Poisson variety. 

\begin{lemma}   \label{Lem:Pencil-1}
Let $H$ be an abelian reductive algebraic group and $\h$ be its Lie algebra. 
Assume that $H$ acts on $M$ such that both $P$ and $\Phi$ are $H$-invariant. 
Given a basis $\chi^{(1)},\ldots,\chi^{(\ell)}$ of $\h$, construct the following bivector on $M$ from their infinitesimal vector fields:
\begin{equation}
 \psi_{\zbar}=\sum_{1\leq i<j\leq \ell} z_{ij}\, \chi^{(i)}_M \wedge \chi^{(j)}_M\,,
\end{equation}
where $\zbar=(z_{ij})_{i<j}\in \CC^{\ell(\ell-1)/2}$. 
Then $(M,P+\psi_{\zbar},\Phi)$ is a Hamiltonian quasi-Poisson $G$-variety. 
\end{lemma}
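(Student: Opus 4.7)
My approach is to apply Lemma~\ref{Lem:AltPenc} to the family indexed by the pairs $(i,j)$ with $i<j$, whose $(i,j)$-th member is the bivector $\chi^{(i)}_M\wedge\chi^{(j)}_M$. It therefore suffices to check four things: (a) each $\chi^{(i)}_M\wedge\chi^{(j)}_M$ is $G$-invariant; (b) the full $\psi_{\zbar}$ satisfies $[\psi_{\zbar},\psi_{\zbar}]=0$; (c) $[P,\psi_{\zbar}]=0$; (d) $\psi_{\zbar}^\sharp(\dd(\Phi^\ast F))=0$ for every $F$ on $G$. Each item will come from one of the invariance or commutativity hypotheses, combined with standard properties of the Schouten--Nijenhuis bracket.

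For (a), the $G$- and $H$-actions are to be understood as commuting (this is implicit in having $P$ simultaneously invariant under both groups), so each $\chi^{(i)}_M$ is a $G$-invariant vector field, whence so is any wedge of them. For (b), the hypothesis that $H$ is abelian yields $[\chi^{(i)}_M,\chi^{(j)}_M]=0$ as vector fields on $M$; a short Schouten computation via the graded Leibniz rule then shows that any linear combination of wedges of pairwise commuting vector fields is a Poisson bivector. For (c), the $H$-invariance of $P$ is equivalent to $\mathcal{L}_{\chi^{(i)}_M}P=0$, i.e.\ $[P,\chi^{(i)}_M]=0$ in the Schouten sense, so by the graded Leibniz rule
\[
[P,\chi^{(i)}_M\wedge\chi^{(j)}_M]=[P,\chi^{(i)}_M]\wedge\chi^{(j)}_M-\chi^{(i)}_M\wedge[P,\chi^{(j)}_M]=0,
\]
and summing against the $z_{ij}$ gives (c).

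For (d), the pairing convention of \ref{ss:Setup} gives $(\chi^{(i)}_M\wedge\chi^{(j)}_M)^\sharp(\dd h)=\chi^{(i)}_M(h)\,\chi^{(j)}_M-\chi^{(j)}_M(h)\,\chi^{(i)}_M$ for any function $h$; taking $h=\Phi^\ast F$ and using $\chi^{(k)}_M(\Phi^\ast F)=0$, which is exactly the $H$-invariance of $\Phi$, yields the vanishing. The proof is essentially mechanical bookkeeping with the Schouten--Nijenhuis bracket, and I do not foresee any substantial obstacle. The only point that must be made explicit is (a), namely that the twin invariance of $P$ under $G$ and $H$ is to be read as literal commutation of the two actions, which is what guarantees $G$-invariance of each bivector $\chi^{(i)}_M\wedge\chi^{(j)}_M$ and hence lets Lemma~\ref{Lem:AltPenc}(1) apply.
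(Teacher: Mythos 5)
Your proof is correct and follows essentially the same route as the paper's: the entire content is the three vanishing statements $[\chi_M,P]=0$, $[\chi_M,\chi'_M]=0$ and $\chi_M(\Phi^\ast F)=0$, which you derive from the same hypotheses, and packaging them through Lemma~\ref{Lem:AltPenc} rather than expanding $[P+\psi_{\zbar},P+\psi_{\zbar}]$ directly is only a cosmetic difference. Your explicit observation that the $G$- and $H$-actions must commute for $\psi_{\zbar}$ to be $G$-invariant is a point the paper's proof leaves implicit; it is indeed needed for the conclusion as stated, and it holds in all of the paper's applications, where $H$ sits inside the centre of $G$.
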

\begin{proof}
 Since $P$ is $H$-invariant and $H$ is abelian, we obtain $[\chi_M,P]=0$ and $[\chi_M,\chi'_M]=0$ for any $\chi,\chi'\in \h$. Thus $[P+\psi_{\zbar},P+\psi_{\zbar}]=[P,P]$ and the bivector is quasi-Poisson. 
 
 By $H$-invariance of $\Phi$, $\langle \chi_M , \dd\Phi^\ast F\rangle=0$ for any function $F$ on $G$ and $\chi \in \h$.  This implies that \eqref{Eq:momap} is satisfied with $P+\psi_{\zbar}$ and $\Phi$. 
\end{proof}

Now, we work with $P$ which is non-degenerate, and there exists a corresponding quasi-Hamiltonian structure whose $2$-form is denoted $\omega$.

\begin{proposition}   \label{Pr:Pencil}
Under the assumptions of Lemma \ref{Lem:Pencil-1}, assume that there exists a closed invariant $2$-form $\varpi_{\zbar}$ such that $\varpi_{\zbar}^\flat(\xi_M)=0$ for all $\xi\in \g$, and the compatibility condition \eqref{Eq:corrPOm} holds for $P+\psi_{\zbar}$ with $\omega+\varpi_{\zbar}$.
Then $(M,P+\psi_{\zbar},\Phi)$ is a non-degenerate Hamiltonian quasi-Poisson $G$-variety and 
$(M,\omega+\varpi_{\zbar},\Phi)$ is the corresponding quasi-Hamiltonian $G$-variety. 
\end{proposition}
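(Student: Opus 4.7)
The plan is to reduce the statement to Lemma~\ref{Lem:Compat} applied to the triple $(M, P+\psi_{\zbar}, \Phi)$, since the proposition supplies exactly what one needs to invoke that lemma: a Hamiltonian quasi-Poisson bivector, a candidate $2$-form, and the compatibility relation \eqref{Eq:corrPOm}.

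First, I would note that $(M, P+\psi_{\zbar}, \Phi)$ is already a Hamiltonian quasi-Poisson $G$-variety by Lemma~\ref{Lem:Pencil-1} (the hypotheses of that lemma are inherited from the statement). What remains is to show it is non-degenerate and that its quasi-Hamiltonian partner is $\omega+\varpi_{\zbar}$. By Lemma~\ref{Lem:Compat}, it suffices to verify that $\omega+\varpi_{\zbar}$ satisfies conditions \eqref{Eq:B1} and \eqref{Eq:B2} (the compatibility with $P+\psi_{\zbar}$ is given by hypothesis).

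For \eqref{Eq:B1}, I compute
\begin{equation*}
\dd(\omega+\varpi_{\zbar}) = \dd\omega + \dd\varpi_{\zbar} = \Phi^\ast \eta,
\end{equation*}
using that $(M,\omega,\Phi)$ is already quasi-Hamiltonian (so $\dd\omega=\Phi^\ast\eta$) and that $\varpi_{\zbar}$ is closed by assumption. For \eqref{Eq:B2}, for any $\xi\in\g$,
\begin{equation*}
(\omega+\varpi_{\zbar})^\flat(\xi_M) = \omega^\flat(\xi_M) + \varpi_{\zbar}^\flat(\xi_M) = \tfrac12 \Phi^\ast(\xi,\theta^L+\theta^R)_{\g},
\end{equation*}
using \eqref{Eq:B2} for $\omega$ together with the hypothesis $\varpi_{\zbar}^\flat(\xi_M)=0$. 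Invariance of $\omega+\varpi_{\zbar}$ under $G$ follows from the $G$-invariance of each summand ($\omega$ by hypothesis as part of the quasi-Hamiltonian structure, and $\varpi_{\zbar}$ by hypothesis).

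With \eqref{Eq:B1}, \eqref{Eq:B2}, and the compatibility \eqref{Eq:corrPOm} all in place, Lemma~\ref{Lem:Compat} yields that $P+\psi_{\zbar}$ is non-degenerate and that $(M,\omega+\varpi_{\zbar},\Phi)$ is the corresponding quasi-Hamiltonian $G$-variety. Honestly, there is no serious obstacle here — the proof is essentially bookkeeping. The content of the proposition lies in the hypotheses: one has engineered the correction terms $\psi_{\zbar}$ on the bivector side and $\varpi_{\zbar}$ on the form side so that they decouple cleanly from the moment-map axioms (through $H$-invariance and the vanishing of $\iota_{\xi_M}\varpi_{\zbar}$), leaving the nontrivial work to be the verification of \eqref{Eq:corrPOm} in concrete applications such as Theorem~\ref{Thm:Main1}.
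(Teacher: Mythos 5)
Your proof is correct and follows exactly the paper's own argument: invoke Lemma~\ref{Lem:Pencil-1} for the Hamiltonian quasi-Poisson structure, verify \eqref{Eq:B1} and \eqref{Eq:B2} for $\omega+\varpi_{\zbar}$ from the hypotheses on $\varpi_{\zbar}$, and conclude via Lemma~\ref{Lem:Compat} using the assumed compatibility \eqref{Eq:corrPOm}. The paper's proof is just a terser version of the same bookkeeping.
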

\begin{proof}
 By Lemma \ref{Lem:Pencil-1}, $(M,P+\psi_{\zbar},\Phi)$ is a Hamiltonian quasi-Poisson $G$-variety. 
 By the assumptions on $\varpi_{\zbar}$, the triple $(M,\omega+\varpi_{\zbar},\Phi)$ satisfies \eqref{Eq:B1} and \eqref{Eq:B2}. 
The compatibility \eqref{Eq:corrPOm} allows to conclude from Lemma \ref{Lem:Compat}. 
\end{proof}

In the situations presented above, we obtain a Hamiltonian quasi-Poisson pencil centered at $P$ with moment map $\Phi$, according to Definition \ref{def:Pencil}. 
If the infinitesimal vector fields $\chi^{(1)}_M,\ldots,\chi^{(\ell)}_M$ are independent, this pencil has order $r=\ell(\ell-1)/2$. 

\subsubsection{The trick: embeddings of $\CC^\times$ in the center of $G$}

We can construct the $2$-form $\varpi_{\zbar}$ that appears in Proposition \ref{Pr:Pencil} in the presence of some $\CC^\times$ actions obtained from the center of $G$.

\begin{proposition}   \label{Pr:Pencil-Cact}
Assume that $(M,P,\Phi)$ is a Hamiltonian quasi-Poisson $G$-variety, where $G$ admits a decomposition 
$G=G'\times \times_{j=1}^\ell G_j$ such that for each $1\leq j \leq \ell$, 
$\CC^\times$ embeds in the center of $G_j$ as diagonal matrices 
(i.e. there is a map $\lambda \mapsto \lambda \Id_{G_j}$). 
For each $1\leq j \leq \ell$, denote by $\Inf^{(j)}$ the infinitesimal action of $1\in \CC$ associated with the embedding of $\CC^\times$ into $G_j$.
Write the moment map as $\Phi=(\Phi',\Phi_1,\ldots,\Phi_\ell)$ with $\Phi':M\to G'$ and $\Phi_j:M\to G_j$, $1\leq j\leq \ell$. 
Furthermore, fix $\zbar=(z_{ij})_{1\leq i<j\leq \ell} \in \CC^{\ell(\ell-1)/2}$. Then: 
\begin{enumerate}
 \item The triple $(M,P+\psi_{\zbar},\Phi)$ is a Hamiltonian quasi-Poisson $G$-variety for 
\begin{equation} \label{Eq:PrPenc-psi}
 \psi_{\zbar}:= \sum_{1\leq i<j\leq \ell} z_{ij}\, \Inf^{(i)} \wedge \Inf^{(j)}\,.
\end{equation}
 \item Assume that $P$ is non-degenerate and $\omega$ is the corresponding $2$-form. Let 
\begin{equation} \label{Eq:PrPenc-varpi}
 \varpi_{\zbar}:= \sum_{1\leq i<j\leq \ell} z_{ij}\, \tr(\Phi_i^{-1} \,\dd\Phi_i) \wedge \tr(\Phi_j^{-1} \,\dd\Phi_j)\,.
\end{equation}
 Then $P+\psi_{\zbar}$ is non-degenerate, and $(M,\omega+\varpi_{\zbar},\Phi)$ defines the corresponding quasi-Hamiltonian variety. 
\end{enumerate}
\end{proposition}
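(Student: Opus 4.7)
The plan is to apply Lemma~\ref{Lem:Pencil-1} and Proposition~\ref{Pr:Pencil} with the abelian reductive group $H=(\CC^\times)^\ell$ acting on $M$ through the $\ell$ central embeddings $\CC^\times \hookrightarrow G_j$. For part~(1), the hypotheses of Lemma~\ref{Lem:Pencil-1} reduce to three quick checks: $H$ is abelian by construction; $P$ is $H$-invariant since it is already $G$-invariant; and $\Phi$ is $H$-invariant because $H$ sits in the center of $G$, so the conjugation action of $H$ on $G$ is trivial and $G$-equivariance forces $H$-invariance. Choosing the standard basis of $\h\cong\CC^\ell$ identifies the associated infinitesimal generators with $\Inf^{(1)},\ldots,\Inf^{(\ell)}$ and returns exactly the bivector $\psi_{\zbar}$ of \eqref{Eq:PrPenc-psi}.

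For part~(2), the plan is to feed $\varpi_{\zbar}$ into Proposition~\ref{Pr:Pencil}, which requires verifying that $\varpi_{\zbar}$ is a closed $G$-invariant $2$-form with $\varpi_{\zbar}^\flat(\xi_M)=0$ for every $\xi\in\g$, together with the compatibility \eqref{Eq:corrPOm}. Setting $\alpha_j:=\tr(\Phi_j^{-1}\dd\Phi_j)=\Phi_j^*\tr(\theta^L)$, each $\alpha_j$ is closed (since $\dd\tr(\theta^L)=-\tr(\theta^L\wedge\theta^L)=0$ by cyclicity) and $G$-invariant (the trace is conjugation-invariant), hence so is $\varpi_{\zbar}=\sum_{i<j} z_{ij}\,\alpha_i\wedge\alpha_j$. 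The vanishing $\varpi_{\zbar}^\flat(\xi_M)=0$ reduces to $\alpha_j(\xi_M)=0$: equivariance of $\Phi_j$ gives $\dd\Phi_j(\xi_M)=[\Phi_j,\pi_j(\xi)]$ (up to the sign dictated by \eqref{EqinfVectM}, where $\pi_j\colon\g\to\g_j$ is the projection), so $\Phi_j^{-1}\dd\Phi_j(\xi_M)$ is a matrix commutator whose trace vanishes.

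The only substantive computation is the compatibility \eqref{Eq:corrPOm}, which after subtracting the already-known identity for $(P,\omega)$ reduces to
\[
P^\sharp\circ\varpi_{\zbar}^\flat + \psi_{\zbar}^\sharp\circ\omega^\flat + \psi_{\zbar}^\sharp\circ\varpi_{\zbar}^\flat = 0\,.
\]
The third term vanishes for free: $\psi_{\zbar}^\sharp$ outputs combinations of $\Inf^{(i)}_M$, while $\varpi_{\zbar}^\flat$ annihilates all infinitesimal $\g$-vectors by the preceding step. The cancellation of the first two terms rests on two identities. First, \eqref{Eq:B2} applied to the element $\iota_i\in\g$ generating $\Inf^{(i)}$ gives $\omega^\flat(\Inf^{(i)}_M)=\alpha_i$, after noting that $(\iota_i,\theta^L+\theta^R)_{\g}$ pulls back via $\Phi$ to $2\alpha_i$ by cyclicity. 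Second, $P^\sharp(\alpha_i)=\Inf^{(i)}_M$: I apply the moment map relation \eqref{Eq:momap} to a local primitive $F_i=\log\det_i$ of $\alpha_i$ on $G$, and compute $\mathcal{D}(F_i)=\iota_i$ directly from the definition since the left- and right-invariant derivatives $E_a^{L/R}(F_i)$ both equal $(\iota_i,E_a)_\g$. Feeding (a) and (b) into the expansion, the antisymmetry of the indices $(i,j)$ in both $\psi_{\zbar}$ and $\varpi_{\zbar}$ matches the two remaining terms up to sign, yielding the cancellation. The main obstacle is exactly identity (b): $\alpha_i$ need not be globally exact on $G$, so \eqref{Eq:momap} must be invoked through a local primitive, with a separate verification that $\mathcal{D}(F_i)$ depends only on $\dd F_i$ and therefore defines a global $\g$-valued function on $G$.
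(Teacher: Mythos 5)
Your proposal is correct and follows essentially the same route as the paper: part (1) via Lemma~\ref{Lem:Pencil-1}, part (2) via Proposition~\ref{Pr:Pencil} after reducing the compatibility \eqref{Eq:corrPOm} to the cancellation $\psi_{\zbar}^\sharp\circ\omega^\flat=-P^\sharp\circ\varpi_{\zbar}^\flat$, established through the same two identities $\omega^\flat(\Inf^{(i)})=\tr(\Phi_i^{-1}\dd\Phi_i)$ and $P^\sharp(\tr(\Phi_i^{-1}\dd\Phi_i))=\Inf^{(i)}$. The only deviation is your derivation of the second identity through a local primitive $\log\det_i$ (which works, but forces you to address non-exactness and to check that $\mathcal{D}(F_i)$ depends only on $\dd F_i$); the paper sidesteps this by writing $\tr(\Phi_j^{-1}\dd\Phi_j)=\sum_{u,v}(\Phi_j^{-1})_{vu}\,\dd(\Phi_j)_{uv}$, applying \eqref{Eq:momap} to the matrix-entry functions $g_{uv}$, and using function-linearity of $P^\sharp$, which stays entirely global and algebraic.
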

\begin{proof}
 Part (1) directly follows from Lemma \ref{Lem:Pencil-1}. 
 
For part (2), note that $\varpi_{\zbar}$ is closed and $\varpi_{\zbar}^\flat(\xi_M)=0$ for any $\xi\in \g$ as 
$\tr(\Phi_j \,\dd\Phi_j)$ is $G$-invariant for $1\leq j \leq k$. 
Thus the statement holds by Proposition \ref{Pr:Pencil} if we can show the compatibility 
\begin{equation*}
 (P+\psi_{\zbar})^\sharp \circ (\omega+\varpi_{\zbar})^\flat = \Id_{TM}- \frac14 \sum_{a} (E_a)_M \otimes \Phi^\ast(E^a,\theta^L-\theta^R)_{\g} \,.
\end{equation*}
By assumption this holds for $\zbar=0$. Since $\psi_{\zbar}^\sharp \circ \varpi_{\zbar}^\flat=0$ by invariance of $\tr(\Phi_j \,\dd\Phi_j)$, we are left to prove that for any $X\in \g$, 
\begin{equation} \label{Eq:Pencil-Cact1}
 \psi_{\zbar}^\sharp \circ \omega^\flat(X) = - P^\sharp \circ \varpi_{\zbar}^\flat(X) \,.
\end{equation}
Using \eqref{Eq:PrPenc-psi}, the left-hand side of \eqref{Eq:Pencil-Cact1} reads 
\begin{equation*}
 \sum_{i<j} z_{ij}\, \big(\omega^\flat(\Inf^{(j)})(X)\,\, \Inf^{(i)} - \omega^\flat(\Inf^{(i)})(X)\,\, \Inf^{(j)}\big)\,.
\end{equation*}
Write $1_{\g_j}$ for the image of $1\in \CC$ obtained by differentiating the embedding $\CC^\times \hookrightarrow G_j$. 
By definition of the vector field $\Inf^{(j)}$ and \eqref{Eq:B2}, we get 
\begin{equation}
 \omega^\flat(\Inf^{(j)})=\frac12 (1_{\g_j},\Phi^{-1}\,\dd\Phi+\dd\Phi\, \Phi^{-1})_\g
 =\tr(\Phi_j^{-1} \,\dd\Phi_j)\,,
\end{equation}
since $\Phi_j$ is the diagonal block of $\Phi$ corresponding to its $G_j$-component. Hence 
\begin{equation}  \label{Eq:Pencil-Cact2}
\eqref{Eq:Pencil-Cact1}_{LHS}=
\sum_{i<j} z_{ij}\, ( \langle X, \tr(\Phi_j^{-1} \,\dd\Phi_j)\rangle \, \Inf^{(i)} - \langle X, \tr(\Phi_i^{-1} \,\dd\Phi_i)\rangle\, \Inf^{(j)}) \,.
\end{equation}
Meanwhile, we use \eqref{Eq:PrPenc-varpi} to expand the right-hand side of \eqref{Eq:Pencil-Cact1} as
\begin{equation} \label{Eq:Pencil-Cact3}
 - \sum_{i<j} z_{ij}\, \Big( \langle X, \tr(\Phi_i^{-1} \,\dd\Phi_i)\rangle\, P^\sharp(\tr(\Phi_j^{-1} \,\dd\Phi_j)) 
 - \langle X, \tr(\Phi_j^{-1} \,\dd\Phi_j)\rangle \, P^\sharp(\tr(\Phi_i^{-1} \,\dd\Phi_i)) \Big)\,.
\end{equation}
We can write $\tr(\Phi_j^{-1} \,\dd\Phi_j)=\sum_{u,v} (\Phi_j^{-1})_{vu} \,\dd(\Phi_j)_{uv}$, and let $g_{uv}:z\mapsto z_{uv}$ be the function on $G_j$ returning the $(u,v)$ entry of an element (recall that $G_j$ embeds in $G$ which is itself embedded in some $\Gl_N$). 
Note from \eqref{Eq:momap} that 
\begin{equation}
 P^\sharp(\dd(\Phi_j)_{uv}) = (\Phi_j^\ast \mathcal{D}(g_{uv}))_M
 =\frac12 \sum_a (E_a \Phi_j + \Phi_j E_a)_{uv}\, (E^a)_M\,, 
\end{equation}
where $(E_a)_a$, $(E^a)_a$ are dual bases of $\g_j$. This implies 
\begin{equation*}
P^\sharp(\tr(\Phi_j^{-1} \,\dd\Phi_j)) = (\sum_a \tr(E_a)\, E^a)_M = (1_{\g_j})_M = \Inf^{(j)}\,. 
\end{equation*}
Plugging this expression in \eqref{Eq:Pencil-Cact3} yields that the right-hand side of \eqref{Eq:Pencil-Cact1} has the same expansion as the left-hand side in \eqref{Eq:Pencil-Cact2}, as desired. 
\end{proof}

When the $\CC^\times$ actions considered in the statement of Proposition \ref{Pr:Pencil-Cact} define independent infinitesimal vector fields, this result yields a quasi-Poisson pencil centered at $P$ of order $\ell(\ell-1)/2$.  

\begin{remark} \label{Rem:Cstar-Fus}
In Proposition \ref{Pr:Pencil-Cact}, if some factors $G_j$ of $G$ are simply given by $\CC^\times$, then the corresponding terms appearing in $\psi_{\zbar}$ \eqref{Eq:PrPenc-psi} and $\varpi_{\zbar}$ \eqref{Eq:PrPenc-varpi} are multiple of the terms obtained by fusion of the corresponding actions. 
To see this, assume e.g. that $G_1=G_2=\CC^\times$ with $\{1\}$ being trivially an orthonormal basis of $\g_i=\CC$, $i=1,2$, then note that $\frac12 \Inf^{(1)} \wedge  \Inf^{(2)}$ is just $\psi_{\mathrm{fus}}$ \eqref{Eq:Fus-Psi}, and $\frac12 \Phi_1^{-1} \dd\Phi_1 \wedge  \Phi_2^{-1} \dd\Phi_2$ equals $\omega_{\mathrm{fus}}$ \eqref{Eq:Fus-omeg}. 
\end{remark}


\begin{remark}
In the real case with a compact Lie group $G$, an analogue of Proposition \ref{Pr:Pencil-Cact} consists in considering the $\operatorname{U}(1)$-actions coming from (the center of) factors $\operatorname{U}(n)$ in $G$. 
The pencil exhibited in \cite{FF23} can be derived as a special instance of this construction.
\end{remark}

\subsection{Case of multiplicative quiver varieties} \label{ss:PencilMQV}

Let $Q$ be a quiver and $\nfat$ a dimension vector. 
We continue with the notation of \ref{ss:MQV}.

For each $b\in \overline{Q}$,
there is a natural left action of $\CC^\times$ on $M_{\overline{Q}}$ given by 
\begin{equation} \label{Eq:Act-Ct}
\mathcal{A}_b : \CC^\times \times M_{\overline{Q}} \to M_{\overline{Q}}, \quad
(\lambda, \Xtt) \mapsto \lambda \cdot_b \Xtt:= (\lambda^{\delta_{ab} \epsilon(a)}\,\Xtt_a)_{a\in \overline{Q}}\,.
\end{equation}
This induces a left action on functions characterized by $\lambda \cdot_b \Xtt_b = \lambda^{-1} \Xtt_b$, $\lambda \cdot_b \Xtt_{b^\ast} = \lambda \Xtt_{b^\ast}$, and $\lambda \cdot_b \Xtt_a = \Xtt_a$ for $a\in \overline{Q}\setminus\{b,b^\ast\}$.
We obtain a corresponding infinitesimal action of $\CC$ on functions that satisfies
$z \cdot_b \Xtt_b = - z \Xtt_b$, $z \cdot_b \Xtt_{b^\ast} = z \Xtt_{b^\ast}$, and $z \cdot_b \Xtt_a = 0$ otherwise for $z\in \CC$.
We write $\mathcal{A}_b(z):=z \cdot_b (-)$ for the corresponding vector field on $M_{\overline{Q}}$. 
The following identity is straightforward:
\begin{equation} \label{Eq:Act-Ct-inf}
 \mathcal{A}_b(z) = z\, \tr(\Xtt_{b^\ast}\,\partial_{b^\ast}  - \Xtt_{b}\, \partial_b)\,.
\end{equation} 
The vector field $\mathcal{A}_b$ restricts to $M_{\overline{Q}}^\circ$ as  the moment map $\Phi$ \eqref{Eq:momapMQV} is invariant under \eqref{Eq:Act-Ct}. 

To state the next result, recall $P,\omega$ and $\Phi$ given by \eqref{Eq:qP-MQV}, \eqref{Eq:omega-MQV} and \eqref{Eq:momapMQV} that define on $M_{\overline{Q}}^\circ$ the structures, presented in Theorem \ref{Thm:qPqHam-MQV}, of non-degenerate Hamiltonian quasi-Poisson and quasi-Hamiltonian variety which correspond to one another.

\begin{theorem}   \label{Thm:Pencil-MQV}
 Fix $\zbar=(z_{a,b})_{a<b}$ where $z_{a,b}\in \CC$ for each $a,b\in Q$ with $a<b$. 
Define the following bivector  and $2$-form on $M_{\overline{Q}}^\circ$: 
\begin{align}
 \psi_{\zbar}&= \sum_{a<b} z_{a,b}\,  \tr \left[\Xtt_{a^\ast}\,\partial_{a^\ast}  - \Xtt_{a}\, \partial_a \right] \wedge 
 \tr \left[\Xtt_{b^\ast}\,\partial_{b^\ast}  - \Xtt_{b}\, \partial_b \right]\,, \label{Eq:psiMQV} \\
 \varpi_{\zbar}&= \sum_{a<b} z_{a,b}\,  \tr \left[(\Id+\Xtt_a \Xtt_{a^\ast})^{-1} \, \dd(\Id+\Xtt_a \Xtt_{a^\ast}) \right] \wedge 
 \tr \left[(\Id+\Xtt_b \Xtt_{b^\ast})^{-1} \, \dd(\Id+\Xtt_b \Xtt_{b^\ast}) \right]\,. \label{Eq:varpiMQV}
\end{align}
Then: 
\begin{enumerate}
 \item the triple $(M_{\overline{Q}}^\circ, P+\psi_{\zbar},\Phi)$ is a non-degenerate Hamiltonian quasi-Poisson variety; 
 \item the triple $(M_{\overline{Q}}^\circ, \omega+\varpi_{\zbar},\Phi)$ is a quasi-Hamiltonian variety;
 \item the two triples correspond to one another via \eqref{Eq:corrPOm}. 
\end{enumerate}
\end{theorem}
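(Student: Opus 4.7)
The plan is to apply Proposition~\ref{Pr:Pencil-Cact} in the ``totally separated'' setting alluded to in Remark~\ref{Rem:MQV-fusion}, and then transfer the resulting pencil through fusion using Lemma~\ref{Lem:FusPenc}, mirroring the strategy by which Theorem~\ref{Thm:qPqHam-MQV} is itself established from the totally separated case.

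First, I view $M_{\overline{Q}}^\circ$ as a $G_{\mathrm{sep}}:=\prod_{a \in \overline{Q}} \Gl_{n_{t(a)}}$-variety, where for each $a \in \overline{Q}$ the corresponding factor acts only on the pair $(\Xtt_a, \Xtt_{a^\ast})$ as in Remark~\ref{Rem:MQV-fusion}, with moment map component $\Phi_{(a)} = (\Id + \Xtt_a \Xtt_{a^\ast})^{\epsilon(a)}$; the unfused quasi-Poisson bivector $P_{\mathrm{sep}}$ and $2$-form $\omega_{\mathrm{sep}}$ then consist only of the first sums appearing in \eqref{Eq:qP-MQV} and \eqref{Eq:omega-MQV} respectively. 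For each $a \in Q \subset \overline{Q}$ I next embed $\CC^\times$ into the center of $\Gl_{n_{t(a)}}$ as scalar matrices. A direct computation from the separated version of \eqref{Eq:Act-Gln} identifies the infinitesimal action $\Inf^{(a)}$ of $1 \in \CC$ with the vector field $\mathcal{A}_a$ of \eqref{Eq:Act-Ct-inf}, and identifies $\tr(\Phi_{(a)}^{-1}\, \dd\Phi_{(a)}) = \epsilon(a)\,\tr[(\Id + \Xtt_a \Xtt_{a^\ast})^{-1}\, \dd(\Id + \Xtt_a \Xtt_{a^\ast})]$, which for $a \in Q$ is exactly the factor appearing in \eqref{Eq:varpiMQV}. (Choosing instead the embedding for $a^\ast$ merely replaces $\Inf^{(a)}$ by $-\Inf^{(a)}$, so it contributes no new pencil element; this is why restricting the summation to $a,b \in Q$ suffices.)

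Applying Proposition~\ref{Pr:Pencil-Cact} with the decomposition $G_{\mathrm{sep}} = G' \times \prod_{a \in Q} \Gl_{n_{t(a)}}$ (where $G'$ collects the remaining factors $\Gl_{n_{t(a^\ast)}}$) then yields, in the separated setting, a non-degenerate Hamiltonian quasi-Poisson structure with bivector $P_{\mathrm{sep}} + \psi_{\zbar}$, together with the matching quasi-Hamiltonian $2$-form $\omega_{\mathrm{sep}} + \varpi_{\zbar}$, where $\psi_{\zbar}$ and $\varpi_{\zbar}$ are as in \eqref{Eq:psiMQV}--\eqref{Eq:varpiMQV} and the correspondence \eqref{Eq:corrPOm} holds. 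Finally, I fuse all factors $\Gl_{n_{t(a)}}$ sharing a common vertex $s \in I$ to recover the $\Gl_{\nfat}$-action and the ordered moment map \eqref{Eq:momapMQV}. By Lemma~\ref{Lem:FusPenc}, fusion preserves the pencil structure: the central element becomes $(P_{\mathrm{sep}} - \psi_{\mathrm{fus}}) + \psi_{\zbar} = P + \psi_{\zbar}$ and the corresponding $2$-form becomes $(\omega_{\mathrm{sep}} - \omega_{\mathrm{fus}}) + \varpi_{\zbar} = \omega + \varpi_{\zbar}$, where the accumulated fusion corrections $-\psi_{\mathrm{fus}}$ and $-\omega_{\mathrm{fus}}$ are precisely the second sums in \eqref{Eq:qP-MQV} and \eqref{Eq:omega-MQV} (exactly as in the proof of Theorem~\ref{Thm:qPqHam-MQV}), while the pencil increments $\psi_{\zbar}$ and $\varpi_{\zbar}$ are unchanged. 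The compatibility \eqref{Eq:corrPOm} is transferred by part~(2) of Lemma~\ref{Lem:FusPenc}, giving all three assertions simultaneously.

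I expect the main obstacle to be purely notational bookkeeping: matching the abstract expressions \eqref{Eq:PrPenc-psi}--\eqref{Eq:PrPenc-varpi} from Proposition~\ref{Pr:Pencil-Cact} to the concrete quiver formulas \eqref{Eq:psiMQV}--\eqref{Eq:varpiMQV}, and tracking how the successive fusion steps over each vertex assemble into the second sum of \eqref{Eq:qP-MQV}--\eqref{Eq:omega-MQV} in the prescribed ordering on $\overline{Q}$. Once these identifications are secured the theorem follows with no further computation, since the heavy lifting (the $[P,P]=\phi_M$ identity, the moment map property, the non-degeneracy, and the correspondence with the quasi-Hamiltonian $2$-form) is already packaged inside Proposition~\ref{Pr:Pencil-Cact} and Lemma~\ref{Lem:FusPenc}.
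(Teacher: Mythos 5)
Your proposal is correct and follows essentially the same route as the paper: apply Proposition~\ref{Pr:Pencil-Cact} to $M_{\overline{Q}}^\circ$ viewed before fusion as a $\prod_{a\in\overline{Q}}\Gl_{n_{t(a)}}$-variety (Remark~\ref{Rem:MQV-fusion}), identify the infinitesimal $\CC^\times$-actions with $\mathcal{A}_a(1)$ and the invariant $1$-forms with the factors of $\varpi_{\zbar}$, then transfer the pencil through fusion. The paper's proof is a one-line version of exactly this argument (citing Lemma~\ref{Lem:AltPenc} where you invoke Lemma~\ref{Lem:FusPenc}, an immaterial difference since the latter is built from the former), so your write-up simply supplies the bookkeeping the paper leaves implicit.
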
 
\begin{proof}
Note that the action $\mathcal{A}_b$ \eqref{Eq:Act-Ct} of $\CC^\times$ on $M_{\overline{Q}}^\circ$ is obtained using the embedding $\CC^\times \hookrightarrow \Gl_{n_{t(b)}}$, $\lambda \mapsto \lambda \Id_{n_{t(b)}}$, from the corresponding action considered in Remark \ref{Rem:MQV-fusion} \emph{before} fusion. 
Therefore it suffices to apply Proposition \ref{Pr:Pencil-Cact} to  $M_{\overline{Q}}^\circ$ before fusion, then perform fusion as in the original works of Van den Bergh and Yamakawa \cite{VdB1,VdB2,Ya}; we conclude by Lemma \ref{Lem:AltPenc}.  
\end{proof}

The range of parameters $\zbar$ in Theorem \ref{Thm:Pencil-MQV} is restricted to all $a<b$ with $a,b\in Q$ instead of $a,b\in \overline{Q}$ to avoid redundancies. 
Indeed, it is clear that $\mathcal{A}_b(z)=-\mathcal{A}_{b^\ast}(z)$ for each $b\in \overline{Q}$ and $z\in \CC$, and similarly using the notation from Remark \ref{Rem:MQV-fusion}, 
$\tr[\Phi_{(b)}^{-1} \dd\Phi_{(b)}]=-\tr[\Phi_{(b^\ast)}^{-1} \dd \Phi_{(b^\ast)}]$ for each $b\in \overline{Q}$. 
The infinitesimal actions $\mathcal{A}_b(1)$ \eqref{Eq:Act-Ct-inf} taken with $b\in Q$ are independent whenever each $X_b$ is not just a point, and we can conclude that Theorem \ref{Thm:Main1} holds.
We can in fact state the following generalization of Corollary \ref{Cor:qPqHam-redMQV}.

\begin{corollary}  \label{Cor:Pencil-redMQV}
The multiplicative quiver variety $\Mcal_{\overline{Q},\qfat,\theta}$, if not empty, is a variety equipped with a Poisson pencil of order $r\leq r_Q+1$, where $r_Q:=|Q|(|Q|-1)/2$.
For any $z_0\in \CC^\times$ and $\zbar\in \CC^{r_Q}$,
the Poisson bivector $z_0 P+\psi_{z_0\zbar}$ (cf. \eqref{Eq:qP-MQV} and \eqref{Eq:psiMQV}) is non-degenerate on $\Mcal_{\overline{Q},\qfat,\theta}^s$ where the corresponding symplectic form is given by $z_0^{-1} \omega + \varpi_{z_0^{-1}\zbar}$  (cf. \eqref{Eq:omega-MQV} and \eqref{Eq:varpiMQV}).
\end{corollary}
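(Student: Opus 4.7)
The plan is to apply quasi-Poisson and quasi-Hamiltonian reduction to each element of the pencil built on $M_{\overline{Q}}^\circ$ in Theorem \ref{Thm:Pencil-MQV}, paralleling exactly the way Corollary \ref{Cor:qPqHam-redMQV} was derived from Theorem \ref{Thm:qPqHam-MQV}. Since $\psi_{\zbar}$ in \eqref{Eq:psiMQV} and $\varpi_{\zbar}$ in \eqref{Eq:varpiMQV} are linear in $\zbar$, one has $z_0 P + \psi_{z_0\zbar} = z_0(P + \psi_{\zbar})$ and $z_0^{-1}\omega + \varpi_{z_0^{-1}\zbar} = z_0^{-1}(\omega + \varpi_{\zbar})$. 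It therefore suffices to treat $z_0 = 1$ and recover the general case by scalar rescaling; this is legitimate because after reduction the Cartan trivector vanishes, so any scalar multiple of a reduced Poisson bivector is again Poisson (with correspondingly rescaled symplectic inverse).

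Fix $\zbar \in \CC^{r_Q}$. Theorem \ref{Thm:Pencil-MQV}(1) states that $(M_{\overline{Q}}^\circ, P + \psi_{\zbar}, \Phi)$ is a Hamiltonian quasi-Poisson $\Gl_\nfat$-variety. Since $\qfat = (q_s\Id_{n_s})$ is central in $\Gl_\nfat$, the level set $\Phi^{-1}(\qfat)$ is $\Gl_\nfat$-invariant, and the GIT quotient with stability parameter $\theta$ defines $\Mcal_{\overline{Q},\qfat,\theta}$. The standard quasi-Poisson reduction of Van den Bergh — already used in Corollary \ref{Cor:qPqHam-redMQV} to produce the reduced Poisson bivector from $P$ — applies verbatim with input $(P + \psi_\zbar, \Phi)$ in place of $(P, \Phi)$, since its only hypotheses are the Hamiltonian quasi-Poisson data together with $\Gl_\nfat$-invariance; all of these are supplied by Theorem \ref{Thm:Pencil-MQV}(1). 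This yields a Poisson bivector $(P + \psi_\zbar)^{\reg}$ on $\Mcal_{\overline{Q},\qfat,\theta}$. To assemble the family into a pencil, I invoke the general observation at the end of \ref{s:PenGen}: reducing a Hamiltonian quasi-Poisson pencil of order $r_Q$ at a central value produces a Poisson pencil of order at most $r_Q + 1$, the ``$+1$'' accounting for the reduced center $P^{\reg}$. Linearity in $\zbar$ and the pencil identities of Theorem \ref{Thm:Pencil-MQV} transfer to the reduced family element by element.

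For non-degeneracy on $\Mcal^s_{\overline{Q},\qfat,\theta}$, I apply quasi-Hamiltonian reduction (\cite{AMM, VdB2, Ya}) to the quasi-Hamiltonian variety $(M_{\overline{Q}}^\circ, \omega + \varpi_{\zbar}, \Phi)$ provided by Theorem \ref{Thm:Pencil-MQV}(2): the pullback of $\omega + \varpi_{\zbar}$ to $\Phi^{-1}(\qfat) \cap M_{\overline{Q}}^{\theta.s}$ descends to a symplectic form on the geometric quotient. That this reduced symplectic form coincides with the inverse of the reduced Poisson bivector follows from the compatibility \eqref{Eq:corrPOm} guaranteed on $M_{\overline{Q}}^\circ$ by Theorem \ref{Thm:Pencil-MQV}(3), because on $\Phi^{-1}(\qfat)$ the $\g$-action correction on the right-hand side of \eqref{Eq:corrPOm} projects out under the quotient. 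The substantive content is already packaged in Theorem \ref{Thm:Pencil-MQV}; what remains is the standard but somewhat delicate bookkeeping of reduction at a central value — checking that the linear parameter structure of the pencil commutes with the reduction maps — and this is identical to the treatment of the case $\zbar = 0$ by Van den Bergh and Yamakawa. The final scalar rescaling by $z_0 \in \CC^\times$ is then trivial.
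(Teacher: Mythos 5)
Your proposal is correct and follows essentially the same route as the paper, which derives the corollary by feeding the pencil of Theorem \ref{Thm:Pencil-MQV} into the same quasi-Poisson/quasi-Hamiltonian reduction that produced Corollary \ref{Cor:qPqHam-redMQV}, together with the general observation at the end of \S\,\ref{s:PenGen} that a Hamiltonian quasi-Poisson pencil of order $r$ descends to a Poisson pencil of order at most $r+1$ on $\Phi^{-1}(\mathcal{C})/\!\!/G$. Your explicit handling of the rescaling by $z_0$ (noting that scalar multiples are only Poisson \emph{after} reduction, once the Cartan trivector has vanished) is a point the paper leaves implicit, and it is handled correctly.
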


\begin{remark}
 Quantizations of multiplicative quiver varieties were considered in \cite{GJS,Jo} based on Van den Bergh's bivector $P$ \eqref{Eq:qP-MQV}. It would be interesting to understand how to generalize these works to encompass the whole pencil of Corollary \ref{Cor:Pencil-redMQV}. 
\end{remark}

Computing the order $r$ of the pencil inherited by $\Mcal_{\overline{Q},\qfat,\theta}$ appears to be a challenging problem. 
If the quiver $Q$ contains loops or multiple arrows between two vertices, 
we expect that $r>1$ (assuming that $\dim \Mcal_{\overline{Q},\qfat,\theta}>0$). We refer to Corollary \ref{Cor:RS-rank} with $d>2$ for a family of quivers where this holds. 
However, the added bivectors $\psi_{\zbar}$ \eqref{Eq:psiMQV} may vanish after quasi-Poisson reduction, leading to trivial cases; this occurs for quivers of $A_l$ type, for example. 
Let us see this in greater generalities for a star-shaped quiver. Following e.g. \cite[\S4]{Ya}, given $k\geq 1$ and $l:=(l_1,\ldots,l_k)\in \Z_{\geq 1}^k$, the quiver $Q_{k;l}$ corresponding to these data has for vertex set 
$I=\{0\}\cup \{(k',l') \mid 1\leq l' \leq l_{k'},\,\, 1\leq k' \leq k\}$ and exactly one arrow 
$(k',1)\to 0$ for each $1\leq k' \leq k$, and one arrow $(k',l')\to (k',l'-1)$ for each $2\leq l' \leq l_{k'}$ and $k'$. 
We call a quiver \emph{star-shaped} if it is of the form $Q_{k;l}$ (for some $k,l$), up to changing the orientation of some arrows. 

\begin{proposition}
 Assume that $Q$ is star-shaped and $\qfat\in (\CC^\times)^I$ is such that $\Mcal_{\overline{Q},\qfat,0}$ (with $\theta=0$) has positive dimension.   
Then the Poisson pencil constructed in Corollary \ref{Cor:Pencil-redMQV} has order $1$, i.e. it only consists of multiples of Van den Bergh's bivector $P$ \eqref{Eq:qP-MQV}. 
\end{proposition}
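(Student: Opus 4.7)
The plan is to exploit the fact that, for a star-shaped (and hence tree-like) quiver, every $\CC^\times$-action $\mathcal{A}_a$ defined in \eqref{Eq:Act-Ct} is already contained in the $\Gl_\nfat$-action on $M_{\overline{Q}}^\circ$. Consequently, the added bivectors $\psi_{\zbar}$ from \eqref{Eq:psiMQV} become combinations of wedges of fundamental $\gl_\nfat$-vector fields, which pair trivially with $\Gl_\nfat$-invariant functions and therefore descend to the zero bivector on the reduction.

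First I would note that since a star-shaped quiver is a tree, for every arrow $a\in Q$ the pair $\{a,a^\ast\}$ is a bridge in $\overline{Q}$: its removal disconnects the vertex set into two components $I = I_a^+ \sqcup I_a^-$ with $t(a)\in I_a^+$ and $h(a)\in I_a^-$. I would then introduce $\xi_a\in\gl_\nfat$ given block-wise by $(\xi_a)_s = \Id_{n_s}$ for $s\in I_a^+$ and $(\xi_a)_s = 0$ for $s\in I_a^-$, and verify via the block-commutator formula $\xi_{M_{\overline{Q}}}(\Xtt_b) = \Xtt_b \xi_{h(b)} - \xi_{t(b)}\Xtt_b$ that $(\xi_a)_{M_{\overline{Q}}}$ coincides with $\mathcal{A}_a(1)$ from \eqref{Eq:Act-Ct-inf}: the scalar factor $(\xi_a)_{h(b)} - (\xi_a)_{t(b)}$ vanishes whenever both endpoints of $b$ lie in the same component, i.e.\ whenever $b\notin\{a,a^\ast\}$, while it evaluates to $-1$ for $b=a$ and $+1$ for $b=a^\ast$, matching the scaling pattern of $\mathcal{A}_a(1)$.

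With this identification in hand, the bivector $\psi_{\zbar}$ of \eqref{Eq:psiMQV} rewrites as $\sum_{a<b,\, a,b\in Q} z_{a,b}\,(\xi_a)_{M_{\overline{Q}}}\wedge (\xi_b)_{M_{\overline{Q}}}$. Since the reduced Poisson bracket on $\Mcal_{\overline{Q},\qfat,0}$ is computed on differentials of $\Gl_\nfat$-invariant functions lifted from the quotient, and these are annihilated by fundamental vector fields of the $\Gl_\nfat$-action, we obtain $\psi_{\zbar}(\dd f,\dd g)=0$ for any pair of invariants. Hence the brackets induced by $P$ and by $P+\psi_{\zbar}$ on $\Mcal_{\overline{Q},\qfat,0}$ coincide, and the whole pencil collapses to $\CC\cdot P$; its order is exactly $1$ because $P$ is nonzero on the positive-dimensional variety $\Mcal_{\overline{Q},\qfat,0}$. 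The only geometric input is the fact that every arrow of a tree is a bridge, and I do not anticipate any serious obstacle beyond bookkeeping, as the rest of the argument follows from the standard description of the quasi-Hamiltonian reduced Poisson structure.
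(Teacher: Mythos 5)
Your argument is correct, and it takes a genuinely different route from the paper. The paper's proof works at the level of the invariant ring: by Le Bruyn--Procesi \cite{LP90}, $\CC[\Mcal_{\overline{Q},\qfat,0}]$ is generated by traces of closed paths, and in the double of a tree every closed path traverses $a$ and $a^\ast$ equally often, so each generator is invariant under every $\CC^\times$-action $\mathcal{A}_b$; hence the infinitesimal vector fields \eqref{Eq:Act-Ct-inf} annihilate all invariants and $\psi_{\zbar}$ descends to zero. You instead observe that for a tree each edge $\{a,a^\ast\}$ is a bridge, and that the resulting two-block central element $\xi_a\in\gl_{\nfat}$ has fundamental vector field $(\xi_a)_{M_{\overline{Q}}}=\mathcal{A}_a(1)$ (your sign bookkeeping via $\xi_M(\Xtt_b)=\Xtt_b\xi_{h(b)}-\xi_{t(b)}\Xtt_b$ is consistent with \eqref{Eq:Act-inf} and \eqref{Eq:Act-Ct-inf}); thus $\psi_{\zbar}$ \eqref{Eq:psiMQV} is a sum of wedges of fundamental vector fields of the $\Gl_{\nfat}$-action and kills invariant functions for free. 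Your version buys a cleaner, more structural statement --- the $\CC^\times$-actions factor through the group one is reducing by, which explains \emph{why} the pencil collapses and avoids invoking Le Bruyn--Procesi --- whereas the paper's computation on trace generators is the one that generalizes to quivers with cycles, where it detects exactly which $\mathcal{A}_b$ survive reduction. One shared loose end, present in both arguments: the final step that the order is exactly $1$ (rather than $0$) rests on the implicit claim that positive dimension of $\Mcal_{\overline{Q},\qfat,0}$ forces $P$ to be nonzero somewhere on it; the paper takes this for granted as well, so it is not a gap relative to the paper's own standard of rigour.
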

\begin{proof}
The coordinate ring $\CC[\Mcal_{\overline{Q},\qfat,0}]$ is generated by elements of the form $\tr(\gamma):=\tr(\Xtt_{a_1} \cdots \Xtt_{a_k})$, where $\gamma:=a_1 \cdots a_k \in \CC\overline{Q}$ is a closed path with $a_1,\ldots,a_k\in \overline{Q}$. (This is a consequence of the Le Bruyn-Procesi's theorem \cite{LP90}.) 
In the double of a star-shaped quiver, any closed path $\gamma$ contains as many factors of $a$ and $a^\ast$ for each fixed $a\in Q$, so that the associated element $\tr(\gamma)$ is invariant under each $\CC^\times$-action $\mathcal{A}_b$ \eqref{Eq:Act-Ct}, $b\in Q$. 
Thus, any infinitesimal action \eqref{Eq:Act-Ct-inf} acts trivially on $\CC[\Mcal_{\overline{Q},\qfat,0}]$, and the bivector $\psi_{\zbar}$ \eqref{Eq:psiMQV} is just zero. 
\end{proof}

\subsection{Variants of the pencil} \label{s:VarPencil}

We present several uses of Proposition \ref{Pr:Pencil-Cact} in situations that are analogous to the one of multiplicative quiver varieties. 

\subsubsection{Deformation of the moment map} \label{ss:DefoMomap}

Fix $\gamma:=(\gamma_a)_{a\in Q}\in \CC^{|Q|}$ and let $\gamma_{a}:=\gamma_{a^\ast}$ for any $a\in \overline{Q} \setminus Q$.
Write $M_{\overline{Q}}^{\gamma}:=\{\Xtt \mid \det (\gamma_a \Id+\Xtt_a \Xtt_{a^\ast}) \neq 0 \, \forall a \in \overline{Q}\}\subset M_{\overline{Q}}$.
This subvariety is empty if there exists $a$ such that $\gamma_a=0$ with $n_{t(a)}\neq n_{h(a)}$.
Using the approach of \cite[\S4.4]{Ya}, the following result is easily adapted from Theorem \ref{Thm:qPqHam-MQV} which corresponds to the case $\gamma=(1,\ldots,1)$.

\begin{theorem}  \label{Thm:qPqHam-MQV-bis}
The smooth complex variety  $M_{\overline{Q}}^{\gamma}$ is endowed with a structure of Hamiltonian quasi-Poisson variety for
the moment map
\begin{equation} \label{Eq:momapMQV-bis}
\Phi^\gamma:M_{\overline{Q}}^{\gamma}\to \Gl_{\nfat},\quad
\Phi^\gamma(\Xtt):=\prod_{a \in \overline{Q}} (\gamma_a \Id+\Xtt_a \Xtt_{a^\ast})^{\epsilon(a)}\,,
\end{equation}
and the quasi-Poisson bivector
\begin{equation} \label{Eq:qP-MQV-bis}
\begin{aligned}
 P^\gamma:=&\, \frac12 \sum_{a\in \overline{Q}} \epsilon(a)\,
 \tr\left[(\gamma_a \Id+\Xtt_{a^\ast}\Xtt_a) \, \partial_a \wedge \partial_{a^\ast} \right]  \\
& -\frac12 \sum_{\substack{a<b \\ a,b\in \overline{Q}}}
 \tr\left[(\partial_{a^\ast} \Xtt_{a^\ast} - \Xtt_{a}\, \partial_a) \wedge (\partial_{b^\ast} \Xtt_{b^\ast} - \Xtt_{b} \, \partial_b) \right] \,.
\end{aligned}
\end{equation}
The quasi-Poisson structure is non-degenerate, and the corresponding quasi-Hamiltonian structure is defined by the $2$-form
\begin{equation} \label{Eq:omega-MQV-bis}
\begin{aligned}
  \omega^\gamma:=&\, -\frac12 \sum_{a\in \overline{Q}} \epsilon(a)\,
 \tr\left[(\gamma_a \Id+\Xtt_a \Xtt_{a^\ast})^{-1} \, \dd\Xtt_a \wedge \dd\Xtt_{a^\ast} \right]  \\
& -\frac12 \sum_{ a\in \overline{Q}}
 \tr\left[ (\Phi_a^\gamma)^{-1} \, \dd\Phi_a^\gamma \wedge \dd(\gamma_a \Id+\Xtt_a \Xtt_{a^\ast})^{\epsilon(a)}\,\, (\gamma_a \Id+\Xtt_a \Xtt_{a^\ast})^{-\epsilon(a)}  \right] \,,
\end{aligned}
\end{equation}
where $\Phi_a^\gamma:=\prod_{b<a} (\gamma_b \Id+\Xtt_b \Xtt_{b^\ast})^{\epsilon(b)}$.
Furthermore, the structure is independent of the choice of ordering or the orientation of arrows, up to isomorphism.
\end{theorem}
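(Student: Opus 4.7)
The plan is to adapt the fusion argument of Remark~\ref{Rem:MQV-fusion}, following the strategy of \cite[\S4.4]{Ya}, and thereby reduce the problem to the case of a \emph{totally separated} quiver consisting of a single pair of arrows $\{a, a^\ast\}$. In this reduced setting only the first sum of each of $P^\gamma$ and $\omega^\gamma$ contributes, the group is $\Gl_{n_{t(a)}} \times \Gl_{n_{h(a)}}$ (or $\Gl_{n_{t(a)}}$ when $t(a)=h(a)$), and the moment map is $(\gamma_a \Id + \Xtt_a \Xtt_{a^\ast})^{\epsilon(a)}$. Once the statement is established for a single arrow, the full Theorem~\ref{Thm:qPqHam-MQV-bis} follows by applying Proposition~\ref{Pr:Fus} with respect to the ordering $<$ on $\overline{Q}$, exactly as in the original proof of Theorem~\ref{Thm:qPqHam-MQV}.

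For a single arrow $a$ with $\gamma_a \neq 0$, I would introduce the $(\Gl_{n_{t(a)}} \times \Gl_{n_{h(a)}})$-equivariant algebraic isomorphism
\[
\phi \colon M_{\overline{Q}}^\gamma \longrightarrow M_{\overline{Q}}^\circ, \qquad \phi(\Xtt_a) = \gamma_a^{-1}\Xtt_a, \quad \phi(\Xtt_{a^\ast}) = \Xtt_{a^\ast},
\]
which is well defined since $\det(\Id + \gamma_a^{-1}\Xtt_a \Xtt_{a^\ast}) = \gamma_a^{-n_{t(a)}}\det(\gamma_a \Id + \Xtt_a \Xtt_{a^\ast})$. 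Using $\phi^\ast \partial_a = \gamma_a \partial_a$ and $\phi^\ast \partial_{a^\ast} = \partial_{a^\ast}$, a short direct computation yields $\phi^\ast \Phi = \gamma_a^{-\epsilon(a)} \Phi^\gamma$, $\phi^\ast P = P^\gamma$, and $\phi^\ast \omega = \omega^\gamma$. The central factor $\gamma_a^{-\epsilon(a)}\in \Gl$ in the first identity does not disturb the moment-map condition \eqref{Eq:momap}, since left (or right) translation of a moment map by a central element of $G$ preserves both its equivariance and the identity \eqref{Eq:momap}. Pulling the structures back through $\phi$ therefore transports everything from Theorem~\ref{Thm:qPqHam-MQV} to $M_{\overline{Q}}^\gamma$ when $\gamma_a \neq 0$.

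The remaining case $\gamma_a = 0$, which forces $n_{t(a)} = n_{h(a)}$ for $M_{\overline{Q}}^\gamma$ to be non-empty, is treated by a Zariski density argument. Regarding $\gamma_a$ as a parameter, the conditions to verify---the quasi-Poisson identity $[P^\gamma, P^\gamma] = \phi_{M_{\overline{Q}}}$, the moment-map identity \eqref{Eq:momap}, the axioms \eqref{Eq:B1} and \eqref{Eq:B2}, and the compatibility \eqref{Eq:corrPOm} (from which non-degeneracy and \eqref{Eq:B3} follow via Lemma~\ref{Lem:Compat} and Proposition~\ref{Pr:Corr})---translate into rational identities in $\gamma_a$ and the matrix entries, with denominators only involving powers of $\det(\gamma_a \Id + \Xtt_a \Xtt_{a^\ast})$. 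Clearing these denominators produces polynomial identities in $\gamma_a$ and the entries of $\Xtt_a, \Xtt_{a^\ast}$; since they hold on the Zariski-open locus $\{\gamma_a \neq 0\}$ by the previous step, they hold identically, in particular at $\gamma_a = 0$.

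The independence of the structures with respect to the ordering $<$ and the orientation $\epsilon$ is obtained by repeating the arguments of \cite[\S6]{VdB1} and \cite{Ya} with $\Id$ replaced by $\gamma_a \Id$ throughout. The main technical point I anticipate in carrying out this plan is keeping track of the central factors $\gamma_b^{-\epsilon(b)}$ through the ordered partial products $\Phi_a^\gamma$ that appear, after fusion, in the second sum of \eqref{Eq:omega-MQV-bis}; this is where careful invocation of Lemma~\ref{Lem:Compat} ensures that no extraneous terms enter the formula for $\omega^\gamma$ and that the pairing of $P^\gamma$ with $\omega^\gamma$ remains consistent under the reassembly.
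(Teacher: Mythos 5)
Your proposal is correct and follows essentially the same route as the paper: reduce by fusion (Remark \ref{Rem:MQV-fusion}, Proposition \ref{Pr:Fus}) to a single pair of arrows, and for $\gamma_a\neq 0$ transport the known $\gamma_a=1$ structure of Theorem \ref{Thm:qPqHam-MQV} through the equivariant rescaling $\Xtt_a\mapsto\gamma_a^{-1}\Xtt_a$, noting that the resulting central factor $\gamma_a^{-\epsilon(a)}$ in the moment map is harmless --- this is exactly the reduction carried out in Appendix \ref{App:Corr}, where the compatibility \eqref{Eq:corrPOm} is then verified by direct computation. The only divergence is the fibre $\gamma_a=0$ (which forces $n_{t(a)}=n_{h(a)}$): the paper identifies it with the quasi-Poisson double of $\Gl_{n_{t(a)}}$ and cites \cite[Ex.~10.5]{AKSM}, whereas you specialize polynomially from the dense locus $\gamma_a\neq 0$; your variant is sound, and you correctly route non-degeneracy --- which is not a closed condition and so does not specialize directly --- through the closed conditions \eqref{Eq:B1}, \eqref{Eq:B2}, \eqref{Eq:corrPOm} and Lemma \ref{Lem:Compat}.
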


While the correspondence \eqref{Eq:corrPOm} between the $2$ structures is known, 
we derive it in Appendix \ref{App:Corr}
as we are not aware of references presenting this computation\footnote{Let us nevertheless refer to \cite{BCS} for a proof in non-commutative geometry. We warn the reader that the reference follows Yamakawa's convention \cite{Ya} for writing paths from right to left.}. 
We arrive at the following generalization of Theorem \ref{Thm:Pencil-MQV}. 

\begin{theorem}   \label{Thm:Pencil-MQV-bis}
 Fix $\zbar=(z_{a,b})_{a<b}$ where $z_{a,b}\in \CC$ for each $a,b\in Q$ with $a<b$. 
Define on $M_{\overline{Q}}^\gamma$ the bivector $\psi_{\zbar}$ \eqref{Eq:psiMQV} and the $2$-form 
\begin{small}
 \begin{align} \label{Eq:varpiMQV-bis}
 \varpi_{\zbar}^\gamma:= \sum_{a<b} z_{a,b}\,  \tr \left[(\gamma_a \Id+\Xtt_a \Xtt_{a^\ast})^{-1} \, \dd(\gamma_a\Id+\Xtt_a \Xtt_{a^\ast}) \right] \wedge 
 \tr \left[(\gamma_b \Id+\Xtt_b \Xtt_{b^\ast})^{-1} \, \dd(\gamma_b \Id+\Xtt_b \Xtt_{b^\ast}) \right]. 
\end{align} 
\end{small}
Then the triple $(M_{\overline{Q}}^\gamma, P^\gamma+\psi_{\zbar},\Phi^\gamma)$ is a non-degenerate Hamiltonian quasi-Poisson variety, which corresponds to the quasi-Hamiltonian variety 
$(M_{\overline{Q}}^\circ, \omega^\gamma+\varpi_{\zbar}^\gamma,\Phi^\gamma)$.  
\end{theorem}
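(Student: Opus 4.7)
The plan is to adapt the proof of Theorem \ref{Thm:Pencil-MQV} by starting from the deformed quasi-Hamiltonian structure of Theorem \ref{Thm:qPqHam-MQV-bis} rather than Theorem \ref{Thm:qPqHam-MQV}. The crucial observation is that the $\CC^\times$-action $\mathcal{A}_b$ from \eqref{Eq:Act-Ct} leaves the product $\Xtt_b \Xtt_{b^\ast}$ unchanged, and hence preserves every factor $\gamma_a \Id + \Xtt_a \Xtt_{a^\ast}$ as well as the full deformed moment map $\Phi^\gamma$ and quasi-Poisson bivector $P^\gamma$ on $M_{\overline{Q}}^\gamma$, irrespective of the values of $\gamma$. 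Thus the whole scheme of ``adding $\CC^\times$-scaling bivectors'' developed in \ref{s:PenGen} still applies.

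Following Remark \ref{Rem:MQV-fusion}, I would first work on $M_{\overline{Q}}^\gamma$ viewed as a $\prod_{a\in \overline{Q}} \Gl_{n_{t(a)}}$-variety \emph{before fusion}, where the quasi-Poisson bivector reduces to the first sum of \eqref{Eq:qP-MQV-bis}, the compatible $2$-form to the first sum of \eqref{Eq:omega-MQV-bis}, and the moment map components are $\Phi^\gamma_{(a)} := (\gamma_a \Id + \Xtt_a \Xtt_{a^\ast})^{\epsilon(a)}$. Each $\mathcal{A}_b$ then arises from the central embedding $\CC^\times \hookrightarrow \Gl_{n_{t(b)}}$, $\lambda \mapsto \lambda \Id_{n_{t(b)}}$, placing us in the setup of Proposition \ref{Pr:Pencil-Cact} with the factors $G_j$ ranging over $\Gl_{n_{t(a)}}$, $a\in \overline{Q}$. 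Applying that proposition yields a Hamiltonian quasi-Poisson pencil centered at the pre-fusion $P^\gamma$, with added bivector of type \eqref{Eq:PrPenc-psi} and compatible $2$-form of type \eqref{Eq:PrPenc-varpi}. Using the identity $\mathcal{A}_a = -\mathcal{A}_{a^\ast}$ deduced from \eqref{Eq:Act-Ct-inf}, together with $\tr(Y^{-\epsilon}\dd Y^\epsilon) = \epsilon \tr(Y^{-1}\dd Y)$ to eliminate the redundancy between $a$ and $a^\ast$, these added terms collapse precisely to $\psi_{\zbar}$ from \eqref{Eq:psiMQV} and $\varpi_{\zbar}^\gamma$ from \eqref{Eq:varpiMQV-bis}, summed now over $a < b$ in $Q$.

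The final step is to transport the resulting pencil through the standard fusion procedure of \cite{VdB1,VdB2,Ya} that promotes $\prod_a \Gl_{n_{t(a)}}$-equivariance to $\Gl_{\nfat}$-equivariance and produces the second (double) sums in \eqref{Eq:qP-MQV-bis} and \eqref{Eq:omega-MQV-bis}. By Lemma \ref{Lem:FusPenc}, this operation preserves both the Hamiltonian quasi-Poisson pencil property (with the bivector $\psi_{\zbar}$ and $2$-form $\varpi_{\zbar}^\gamma$ themselves untouched by fusion, as they are already built from fusion-invariant data) and the correspondence of the non-degenerate pencil with its quasi-Hamiltonian $2$-form. Appealing to Lemma \ref{Lem:AltPenc}(1) then confirms that $(M_{\overline{Q}}^\gamma, P^\gamma + \psi_{\zbar}, \Phi^\gamma)$ is Hamiltonian quasi-Poisson for every $\zbar$. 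I anticipate no conceptual obstruction; the only bookkeeping point requiring care is the handling of the signs $\epsilon(a)$ when translating $(\Phi^\gamma_{(a)})^{-1}\dd \Phi^\gamma_{(a)}$ into $(\gamma_a \Id + \Xtt_a \Xtt_{a^\ast})^{-1}\dd(\gamma_a \Id + \Xtt_a \Xtt_{a^\ast})$, and for this the compatibility \eqref{Eq:corrPOm} at $\zbar = 0$ established in Appendix \ref{App:Corr} provides the indispensable base of the computation.
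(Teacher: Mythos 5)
Your proposal is correct and follows essentially the same route as the paper: the paper states Theorem \ref{Thm:Pencil-MQV-bis} as a direct generalization of Theorem \ref{Thm:Pencil-MQV}, whose proof is exactly your scheme — recognize each $\mathcal{A}_b$ as coming from the central embedding $\CC^\times\hookrightarrow \Gl_{n_{t(b)}}$ before fusion, apply Proposition \ref{Pr:Pencil-Cact}, perform fusion, and conclude via Lemma \ref{Lem:AltPenc}, with the $\zbar=0$ compatibility \eqref{Eq:corrPOm} supplied by Appendix \ref{App:Corr}. Your additional remarks on the $H$-invariance of $P^\gamma$ and $\Phi^\gamma$ and on the $a\leftrightarrow a^\ast$ redundancy match the paper's discussion following Theorem \ref{Thm:Pencil-MQV}.
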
 

By performing reduction, one can state the analogue of Corollary \ref{Cor:Pencil-redMQV} in the obvious way.

\subsubsection{Character varieties} \label{ss:CharVar}

Fix integers $r,g\geq 0$ such that $r+g>0$. For $n\geq1$, define 
$$M_{g,r,n}:=\{ (A_1,A_1^\ast,\ldots,A_g,A_g^\ast,Z_1,\ldots,Z_r) \mid A_i,A_i^\ast,Z_j \in \Gl_n(\CC) \}\,.$$ 
There is a natural action of $\Gl_n$ on $M_{g,r,n}$ by simultaneous conjugation of the $2g+r$ matrices $(A_i,A_i^\ast,Z_j)$. 
For $1\leq i\leq g$, we define as we did for quivers the matrix-valued vector field 
$\partial_{A_i}\in \mathcal{X}^1(M_{g,r,n},\Gl_n)$ with $(k,l)$ entry given by   
$(\partial_{A_i})_{kl}:=\partial/\partial (A_i)_{lk}$. 
We introduce in the exact same way the notations $\partial_{A_i^\ast}$ and $\partial_{Z_j}$, $1\leq j \leq r$. 
To state the next result, we let $(A_i^\ast)^\ast:= A_i$.   

\begin{theorem}  \label{Thm:qPqHam-CharVar}
The smooth complex variety  $M_{g,r,n}$ is endowed with a Hamiltonian quasi-Poisson pencil of order $g(g-1)/2$ centered at the quasi-Poisson bivector
\begin{small}
\begin{equation} \label{Eq:qP-CharVar}
\begin{aligned}
 P_{\mathrm{char}}:=&\, \frac12 \sum_{1\leq i\leq g} \,
 \tr\left[A_i^\ast A_i \, \partial_{A_i} \wedge \partial_{A_i^\ast} - A_i A_i^\ast \partial_{A_i^\ast} \wedge \partial_{A_i} \right] 
 +\frac12 \sum_{1\leq j \leq r}  
 \tr\left[ Z_j^2 \, \partial_{Z_j} \wedge \partial_{Z_j} \right] \\
& -\frac12 \sum_{1\leq i\leq g}
 \tr\left[(\partial_{A_i^\ast} A_i^\ast - A_i\, \partial_{A_i}) \wedge (\partial_{A_i} A_i - A_i^\ast \, \partial_{A_i^\ast}) \right] \\
& -\frac12 \sum_{1\leq i<k\leq g} \sum_{ \substack{C_i\in \{A_i,A_i^\ast\} \\ C_k\in \{A_k,A_k^\ast\} } }
 \tr\left[(\partial_{C_i^\ast} C_i^\ast - C_i\, \partial_{C_i}) \wedge (\partial_{C_k^\ast} C_k^\ast - C_k \, \partial_{C_k}) \right] \\
 & -\frac12  \sum_{ \substack{1\leq i\leq g \\ C_i\in \{A_i,A_i^\ast \} } }   \sum_{1\leq j\leq r}
 \tr\left[(\partial_{C_i^\ast} C_i^\ast - C_i\, \partial_{C_i}) \wedge 
 (\partial_{Z_j} Z_j - Z_j \, \partial_{Z_j}) \right]\\ 
 & -\frac12  \sum_{1\leq j<k \leq r}  
 \tr\left[(\partial_{Z_j} Z_j - Z_j \, \partial_{Z_j}) \wedge (\partial_{Z_k} Z_k - Z_k \, \partial_{Z_k}) \right]
 \,,
\end{aligned}
\end{equation}
\end{small}
with moment map  
\begin{equation} \label{Eq:momap-CharVar}
\Phi:M_{g,r,n}\to \Gl_{n},\quad
\Phi(A_i,A_i^\ast,Z_j):=\prod_{1\leq i\leq g} A_iA_i^\ast A_i^{-1} (A_i^\ast)^{-1} \,Z_1 \cdots Z_r\,. 
\end{equation}
A bivector from the pencil is given by $P_{\mathrm{char}}+\psi_{\zbar}$ for $\zbar=(z_{i,k})_{i<k}$, $z_{ik}\in \CC$, and  
\begin{equation}
  \psi_{\zbar}= \sum_{i<k} z_{i,k}\,  \tr \left[ A_i^\ast\,\partial_{A_i^\ast}  - A_i \, \partial_{A_i} \right] \wedge 
 \tr \left[ A_k^\ast \,\partial_{A_k^\ast}  - A_k \, \partial_{A_k} \right]\,.  \label{Eq:psiCharVar} 
\end{equation}
\end{theorem}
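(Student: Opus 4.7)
The approach mirrors that of Theorem \ref{Thm:Pencil-MQV}. First, recall that $(M_{g,r,n}, P_{\mathrm{char}}, \Phi)$ is obtained by fusion from standard Hamiltonian quasi-Poisson blocks: $g$ copies of the non-fused double $\mathbf{D}(\Gl_n) = \Gl_n \times \Gl_n$, each carrying a $\Gl_n \times \Gl_n$-action with moment map $(A_i, A_i^\ast) \mapsto (A_i A_i^\ast, (A_i^\ast A_i)^{-1})$, together with $r$ copies of $\Gl_n$ acting on itself by conjugation with moment map the identity. The summations in \eqref{Eq:qP-CharVar} indexed by pairs of distinct factors are the cumulative fusion corrections $\psi_{\mathrm{fus}}$ of Proposition \ref{Pr:Fus}, while the first-line summands are the pre-fusion bivectors intrinsic to each double and to each puncture factor.

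The key step is to apply Proposition \ref{Pr:Pencil-Cact} at the pre-fusion level. For each $1 \le i \le g$, select the first of the two $\Gl_n$-factors acting on the $i$-th handle; it contains $\CC^\times$ embedded as its center via $\lambda \mapsto \lambda \Id_n$. With our conventions, the corresponding $\CC^\times$-action on $\mathbf{D}(\Gl_n)$ reads $(A_i, A_i^\ast) \mapsto (\lambda^{-1} A_i, \lambda A_i^\ast)$ (which manifestly preserves the commutator $A_i A_i^\ast A_i^{-1} (A_i^\ast)^{-1}$ appearing in $\Phi$), and its infinitesimal vector field $\Inf^{(i)}$ is exactly $\tr[A_i^\ast \partial_{A_i^\ast} - A_i \partial_{A_i}]$. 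Proposition \ref{Pr:Pencil-Cact}(1) then yields, before fusion, a Hamiltonian quasi-Poisson pencil centered at the pre-fusion bivector with pencil elements $\Inf^{(i)} \wedge \Inf^{(k)}$ for $1 \le i < k \le g$. One then transports this pencil through the fusion steps by applying Lemma \ref{Lem:FusPenc} repeatedly: first the $g$ internal fusions producing each internally-fused double, then the external fusions identifying all $\Gl_n$-factors with one diagonal $\Gl_n$ while absorbing the punctures. The crucial observation is that each selected $\CC^\times$-action is central and therefore commutes with every subsequent fusion (the center of $\Gl_n$ acts trivially under conjugation on other factors), so the pencil bivectors $\Inf^{(i)} \wedge \Inf^{(k)}$ pass through each fusion unchanged and descend to the $\psi_{\zbar}$ of \eqref{Eq:psiCharVar}, while the centered bivector accumulates all $\psi_{\mathrm{fus}}$ corrections to yield exactly $P_{\mathrm{char}}$ and moment map $\Phi$ \eqref{Eq:momap-CharVar}.

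To confirm that the order is exactly $g(g-1)/2$, it suffices to check that at a generic point the $g$ vector fields $\Inf^{(i)}$ are linearly independent in $T_x M_{g,r,n}$: evaluating $\Inf^{(i)}$ against the coordinate $(A_j)_{kl}$ returns $-\delta_{ij} (A_j)_{kl}$, so independence holds whenever no $A_j$ vanishes, and the $\binom{g}{2}$ wedge products $\Inf^{(i)} \wedge \Inf^{(k)}$ are independent in $\bigwedge^2 T_x M_{g,r,n}$. The main obstacle is the fusion bookkeeping: one must track which $\psi_{\mathrm{fus}}$ contributions arise between each pair of distinct factors — including the cross-terms between two handles and between a handle and a puncture that produce the third, fourth and fifth summations of \eqref{Eq:qP-CharVar} — and verify that they reproduce $P_{\mathrm{char}}$ term-by-term under a fixed ordering of the factors. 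This parallels the analogous computation in the proof of Theorem \ref{Thm:Pencil-MQV}, cf. Remark \ref{Rem:MQV-fusion}.
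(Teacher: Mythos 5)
Your argument is correct and is essentially the paper's proof unpacked: the paper simply notes that the $r=0$ case is the $g$-loop quiver instance of Theorems \ref{Thm:qPqHam-MQV-bis} and \ref{Thm:Pencil-MQV-bis} with all deformation parameters $\gamma_a=0$ (whose own proof is exactly your scheme of applying Proposition \ref{Pr:Pencil-Cact} before fusion and then transporting via Lemma \ref{Lem:FusPenc}), and then fuses with $r$ conjugation copies of $\Gl_n$ to handle the punctures. Your identification of $\Inf^{(i)}$ with $\tr\left[A_i^\ast\,\partial_{A_i^\ast}-A_i\,\partial_{A_i}\right]$ and the independence check giving order $g(g-1)/2$ (automatic since each $A_i\in\Gl_n$ is nonzero) match the discussion surrounding Theorem \ref{Thm:Pencil-MQV}.
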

\begin{proof}
 If $r=0$, this is the case of a $g$-loop quiver considered in Theorem \ref{Thm:qPqHam-MQV-bis} where all deformation parameters $\gamma_a$ are set to zero. 
If $r>0$, we perform fusion with $r$ copies of $\Gl_n$ endowed with its Hamiltonian quasi-Poisson structure for the conjugation action and for which the moment map is the identity \cite[Prop.~3.1]{AKSM}.  
\end{proof}

\begin{remark}
 The extra terms for a bivector in the pencil do not depend on the elements $Z_1,\ldots,Z_r$, cf. \eqref{Eq:psiCharVar}. 
This is because the general construction of a pencil as in Proposition \ref{Pr:Pencil-Cact} depends on actions of $\CC^\times$, and in the present situation the center of $\Gl_n$ acts trivially under the actions $Z_j\mapsto g Z_j g^{-1}$ for $g\in \Gl_n$.   
\end{remark}

Character varieties are obtained by performing reduction of $M_{g,r,n}$ with respect to its $\Gl_n$-action. 
In general, the smooth loci of these varieties do not carry a symplectic form; for the Poisson structure to be generically non-degenerate, one needs to consider subvarieties of character varieties obtained by fixing the factors $Z_1,\ldots,Z_r$ to closures of conjugacy classes. 
One can prepare the passage to fixing conjugacy classes before reduction, and the corresponding subvariety of $M_{g,r,n}$ is of the form $M_{\overline{Q}}^{\gamma}$ considered in \ref{ss:DefoMomap} where $Q$ is a ``comet-shaped quiver'', an extension of a $g$-loop quiver by adding $r$ legs. We refer to \cite{CB13,ST,Ya} for more details on this construction. 
The upshot is that, for this last type of varieties, the pencil is made of non-degenerate bivectors and we can write the corresponding quasi-Hamiltonian structures as stated in Theorem \ref{Thm:qPqHam-MQV-bis}. 

\subsubsection{Generalized MQV}  \label{ss:GenMQV}

Following Boalch \cite{Bo15}, we consider quivers endowed with a color function, i.e. pairs $(Q,\mathfrak{c})$ where $\mathfrak{c}:Q\to C$ for some finite set $C$ is a map on the arrows. 
We assume that for each $c\in C$, the subquiver $Q_c=\mathfrak{c}^{-1}(c)$ (if not empty) is a multipartite graph: we can partition the vertices $I_c=\{s\in I \mid s=t(a) \text{ or }s=h(a) \text{ for some }a\in Q_c\}$ into disjoint subsets $I_{c,1},\ldots, I_{c,k_c}$ ($k_c \geq 2$) such that there is exactly one arrow $v_{rs}:r\to s$ for all $r\in I_{c,\ell}$ and $s\in I_{c,\ell'}$ with $1\leq \ell' < \ell \leq k_c$. 
(Any quiver can be seen as a colored quiver by taking $C=Q$ and $\mathfrak{c}=\id_Q$.)

Fix a pair  $(Q,\mathfrak{c})$ and $\nfat \in \Z^I_{\geq 0}$. 
We consider the double $\overline{Q}$ of $Q$ and $M_{\overline{Q}}:=M_{\overline{Q}}(\nfat)$ as in \ref{ss:MQV} (these are independent of $\mathfrak{c}$). 
At $\Xtt\in M_{\overline{Q}}$, we can form for each $c\in C$ the matrices 
\begin{equation}
 v_{c,+}:=\Id + \sum_{\substack{r\in I_{c,\ell},\,s\in I_{c,\ell'}\\ \text{with}\,\ell > \ell'}} \Xtt_{v_{rs}}\,, \quad 
 v_{c,-}:=\Id + \sum_{\substack{r\in I_{c,\ell},\,s\in I_{c,\ell'}\\ \text{with}\,\ell > \ell'}} \Xtt_{v_{rs}^\ast}\,,
\end{equation}
which, as elements of $\End(\oplus_{\ell=1}^{k_c} \oplus_{s\in I_{c,\ell}} \CC^{n_s})$, are regarded as an upper block-triangular and a lower block-triangular matrix, respectively. 
We then let 
\begin{equation}
 M_{\overline{Q}}^{\mathfrak{c},c}:= \{\Xtt \in M_{\overline{Q}} \mid v_{c,-}v_{c,+} \text{ admits an opposite Gauss decomposition}\} \subset  M_{\overline{Q}} \,.
\end{equation}
In other words, at a point of $M_{\overline{Q}}^{\mathfrak{c},c}$, we can factorize $v_{c,-}v_{c,+}=w_{c,+} \Phi_c w_{c,-}$ with $\Phi_c=(\Phi_{c,s})\in \oplus_{\ell=1}^{k_c} \oplus_{s\in I_{c,\ell}} \Gl_{n_s}$ which is block-diagonal, while  $w_{c,+}, w_{c,-}$ are unipotent upper block-triangular and lower block-triangular, respectively. 
The interested reader should consult \cite{Bo15} for precise definitions\footnote{One should bear in mind that Boalch's convention for writing paths and looking at representations are the opposite of those in the present text. A double quiver for us corresponds to a graph for Boalch. We refer to \cite{FFern} for the conventions that we follow.}, as our sole aim is to use the following result.

\begin{theorem}[\cite{Bo15}] \label{Thm:Bo15}
 The smooth complex variety $M_{\overline{Q}}^{\mathfrak{c}}:= \cap_{c\in C} M_{\overline{Q}}^{\mathfrak{c},c}$ is endowed with a structure of quasi-Hamiltonian variety for the moment map 
 \begin{equation} \label{Eq:momap-Bo15}
\Phi^{\mathfrak{c}}:M_{\overline{Q}}^{\mathfrak{c}}\to \Gl_{\nfat},\quad
\Phi^{\mathfrak{c}}(\Xtt)= (\Phi^{\mathfrak{c}}_s(\Xtt))_{s\in I} \quad \text{ for} \quad 
\Phi^{\mathfrak{c}}_s(\Xtt):=\prod_{\substack{c\in C \\ \text{with}\,s\in I_c}} \Phi_{c,s} \in \Gl_{n_s}. 
\end{equation}
\end{theorem}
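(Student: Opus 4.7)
The approach follows Boalch's strategy of constructing the quasi-Hamiltonian structure color by color via \emph{higher fission}, and then assembling across colors via \emph{fusion}; this parallels how Theorem~\ref{Thm:qPqHam-MQV} itself is obtained arrow by arrow. In a first stage, for each color $c\in C$, I would endow $M_{\overline{Q_c}}^{\mathfrak{c},c}$ (the single-color analogue) with a quasi-Hamiltonian structure for the action of $L_c:=\prod_{s\in I_c}\Gl_{n_s}$, whose moment map components are the block-diagonal Gauss factors $(\Phi_{c,s})_{s\in I_c}$. In a second stage, I would fuse these structures across $c\in C$ to obtain the claimed $\Gl_{\nfat}$-quasi-Hamiltonian structure on $M_{\overline{Q}}^{\mathfrak{c}}=\bigcap_c M_{\overline{Q}}^{\mathfrak{c},c}$.

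For the first stage, observe that $v_{c,+}$ and $v_{c,-}$ are, by construction, upper and lower unipotent block-triangular matrices in $\Gl_{N_c}$ with $N_c:=\sum_{s\in I_c}n_s$, and that the open condition defining $M_{\overline{Q}}^{\mathfrak{c},c}$ is exactly the condition that $v_{c,-}v_{c,+}$ lie in the big cell of the opposite Gauss decomposition relative to the Levi $L_c\subset \Gl_{N_c}$. The data $(v_{c,-},v_{c,+})$ identifies $M_{\overline{Q_c}}^{\mathfrak{c},c}$ with an open piece of Boalch's \emph{higher fission space} associated to the pair $(L_c,\Gl_{N_c})$; the $L_c$-quasi-Hamiltonian structure of the latter, established in \cite{Bo15} by iterated fission starting from the standard quasi-Hamiltonian structure on $\Gl_{N_c}$, restricts to the desired structure with moment map $(\Phi_{c,s})$. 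As a consistency check, one verifies that in the bipartite case $k_c=2$ this reproduces an instance of Theorem~\ref{Thm:qPqHam-MQV} applied to the complete bipartite subquiver $Q_c$ with all deformation parameters equal to $1$.

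For the second stage, note that the arrow sets $Q_c$ for different $c\in C$ are disjoint, so $M_{\overline{Q}}^{\mathfrak{c}}$ is naturally a quasi-Hamiltonian variety for the product group $\prod_{c\in C} L_c$, with the $2$-form and moment map obtained additively from the structures produced in the first stage. The $\Gl_{\nfat}$-action appearing in the statement is obtained by diagonally embedding $\Gl_{n_s}$ into $\prod_{c:\,s\in I_c}\Gl_{n_s}$ for each vertex $s$. Applying Proposition~\ref{Pr:Fus} iteratively (once per repeated occurrence of a vertex among the colors), one obtains a quasi-Hamiltonian structure for the $\Gl_{\nfat}$-action whose $s$-component of the moment map is the ordered product $\prod_{c:\,s\in I_c}\Phi_{c,s}$, matching \eqref{Eq:momap-Bo15}.

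The main obstacle is the first stage: producing the explicit quasi-Hamiltonian $2$-form on $M_{\overline{Q_c}}^{\mathfrak{c},c}$ in the quiver variables $(\Xtt_a,\Xtt_{a^\ast})_{a\in Q_c}$ and verifying the axioms \eqref{Eq:B1}--\eqref{Eq:B3}. Equivalently, one must check carefully that the Maurer-Cartan pullbacks arising from the iterated fission description correspond to the expected expressions once everything is rewritten in terms of the block entries of $v_{c,\pm}$; this is essentially the bulk of the analysis in \cite{Bo15}. Once this is in place, the fusion in the second stage proceeds in the same formal manner as in the construction of ordinary multiplicative quiver varieties recalled in Remark~\ref{Rem:MQV-fusion}.
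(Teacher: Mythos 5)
Your outline is essentially the paper's own treatment: the paper does not prove this statement but imports it from Boalch, noting precisely that the structure arises from the quasi-Hamiltonian higher fission spaces of \cite{Bo14} combined with \cite[Prop.~5.3]{Bo15} and that the moment map \eqref{Eq:momap-Bo15} is assembled by fusion over the colors meeting each vertex. Your two-stage sketch (per-color fission space with moment map the Gauss factors $(\Phi_{c,s})$, then iterated fusion via Proposition~\ref{Pr:Fus} along the diagonal embeddings $\Gl_{n_s}\hookrightarrow\prod_{c:\,s\in I_c}\Gl_{n_s}$) is exactly that route, with the genuinely hard part correctly located in Boalch's analysis of the fission spaces.
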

The moment map in \eqref{Eq:momap-Bo15} depends on an ordering of the colors at each vertex $s\in I$, which is irrelevant up to isomorphism because the structure is obtained by fusion. 
We shall not need to write down explicitly the $2$-form $\omega_B$ giving the quasi-Hamiltonian structure in Theorem \ref{Thm:Bo15}, but it can be obtained by combining \cite[Prop.~5.3]{Bo15} with the quasi-Hamiltonian structure of the higher fission spaces given in \cite{Bo14}.  
By the correspondence of Proposition \ref{Pr:Corr}, there is an associated quasi-Poisson structure for a non-degenerate quasi-Poisson bivector\footnote{It should be possible to write $P_B$ using the results of Li-Bland and \v{S}evera \cite{LBS1,LBS2}, although we are unaware of a precise formula for $P_B$ in the above parametrization of $M_{\overline{Q}}^{\mathfrak{c}}$.} 
$P_B\in \Gamma(M_{\overline{Q}}^{\mathfrak{c}},\bigwedge^2 TM_{\overline{Q}}^{\mathfrak{c}})$.  
For the next result, we extend the color function as $\mathfrak{c}:\overline{Q}\to C$ by putting $\mathfrak{c}(a^\ast)=\mathfrak{c}(a)$ for any $a\in Q$. 

\begin{theorem}  \label{Thm:Bo15-Pencil} 
Fix an ordering on pairs $\mathcal{C}:=\{(c,s) \mid c\in C, \, s\in I_c\}$. 
 Fix $\zbar=(z_{c,r;c',s})_{(c,r)<(c',s)}$ where $z_{c,r;c',s}\in \CC$ for each $(c,r),(c',s)\in \mathcal{C}$ 
 with $(c,r)<(c',s)$. 
Define the following bivector  and $2$-form on $M_{\overline{Q}}^{\mathfrak{c}}$: 
\begin{align}
 \psi_{\zbar}&= \sum_{(c,r)<(c',s)} z_{c,r;c',s}\,  \Inf^{(c,r)} \wedge \Inf^{(c',s)}\,, \label{Eq:psiBo15} \\
 \varpi_{\zbar}&= \sum_{(c,r)<(c',s)} z_{c,r;c',s}\,  \tr \left[\Phi_{c,r}^{-1} \, \dd\Phi_{c,r} \right] \wedge 
 \tr \left[ \Phi_{c',s}^{-1} \, \dd\Phi_{c',s} \right]\,. \label{Eq:varpiBo15}
\end{align}
where for $(c,r)\in \mathcal{C}$ we have 
\begin{align} \label{Eq:Inf-cr}
 \Inf^{(c,r)}:=\sum_{\substack{a\in \overline{Q}\, \text{with}\\ \mathfrak{c}(a)=c,\, t(a)=r}} \tr \left[\Xtt_{a^\ast}\,\partial_{a^\ast}  - \Xtt_{a}\, \partial_a \right] \,. 
\end{align}
Then: 
\begin{enumerate}
 \item the triple $(M_{\overline{Q}}^{\mathfrak{c}}, P_B+\psi_{\zbar},\Phi^{\mathfrak{c}})$ is a non-degenerate Hamiltonian quasi-Poisson variety;
 \item the triple $(M_{\overline{Q}}^{\mathfrak{c}}, \omega_B+\varpi_{\zbar},\Phi^{\mathfrak{c}})$ is a quasi-Hamiltonian variety;
 \item the two triples correspond to one another via \eqref{Eq:corrPOm}. 
\end{enumerate}
\end{theorem}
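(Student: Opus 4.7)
The strategy mirrors that of Theorem~\ref{Thm:Pencil-MQV}: we apply the master construction of Proposition~\ref{Pr:Pencil-Cact} to $M_{\overline{Q}}^{\mathfrak{c}}$ \emph{before} the final fusion step in Boalch's construction, then propagate the resulting pencil through fusion by means of Lemma~\ref{Lem:FusPenc}.

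First, we unravel Boalch's construction. For each color $c\in C$, the structure on the higher fission space $\mathcal{B}_c$ associated to the multipartite subquiver $Q_c$ (see~\cite{Bo14}) carries a quasi-Hamiltonian action of the product group $G_c := \prod_{s\in I_c}\Gl_{n_s}$ with moment map components $\Phi_{c,s}$, $s\in I_c$. Gluing along common vertices (the precise operation is described in \cite[Prop.~5.3]{Bo15}), the variety $M_{\overline{Q}}^{\mathfrak{c}}$ carries a quasi-Hamiltonian structure $(\tilde\omega_B,\tilde\Phi^{\mathfrak{c}})$ for the action of the larger group $\tilde G := \prod_{(c,s)\in\mathcal{C}}\Gl_{n_s}$, with moment map $\tilde\Phi^{\mathfrak{c}} = (\Phi_{c,s})_{(c,s)\in\mathcal{C}}$. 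By Proposition~\ref{Pr:Corr} this corresponds to a non-degenerate Hamiltonian quasi-Poisson bivector $\tilde P_B$ on $M_{\overline{Q}}^{\mathfrak{c}}$. Boalch's structure $(\omega_B,\Phi^{\mathfrak{c}})$ is recovered by fusing, at each vertex $s\in I$, the factors $\Gl_{n_s}$ of $\tilde G$ associated to all colors $c$ meeting $s$, which leaves the original group $\Gl_{\nfat}$.

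Next, for each $(c,s)\in\mathcal{C}$ we use the embedding $\CC^\times \hookrightarrow \Gl_{n_s}$, $\lambda\mapsto \lambda\,\Id_{n_s}$, into the centre of the corresponding factor of $\tilde G$. Since the factors of $\tilde G$ are independent before fusion, these embeddings satisfy the hypotheses of Proposition~\ref{Pr:Pencil-Cact}. To identify the infinitesimal action, note that before fusion the $\Gl_{n_s}$-factor indexed by $(c,s)$ only acts on the arrows of color $c$ incident to $s$; applying formula~\eqref{Eq:Act-inf} restricted to this family of arrows (using that $Q_c$ has no loops because it is multipartite) yields exactly the vector field $\Inf^{(c,s)}$ of~\eqref{Eq:Inf-cr}. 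Then Proposition~\ref{Pr:Pencil-Cact} directly gives a Hamiltonian quasi-Poisson pencil on $M_{\overline{Q}}^{\mathfrak{c}}$ centered at $\tilde P_B$, with moment map $\tilde\Phi^{\mathfrak{c}}$, whose added bivector is $\psi_{\zbar}$ of~\eqref{Eq:psiBo15}; moreover, the corresponding non-degenerate quasi-Hamiltonian $2$-form is $\tilde\omega_B + \varpi_{\zbar}$ with $\varpi_{\zbar}$ as in~\eqref{Eq:varpiBo15}, since the $(c,s)$-component of the moment map is precisely $\Phi_{c,s}$.

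Finally, we fuse the $\Gl_{n_s}$-factors of $\tilde G$ at each vertex $s$ in turn. By Lemma~\ref{Lem:FusPenc} (applied iteratively), fusion preserves the quasi-Poisson pencil: the centre $\tilde P_B$ shifts by $-\psi_{\mathrm{fus}}$ terms at each step to become $P_B$, the corresponding $2$-form shifts by $-\omega_{\mathrm{fus}}$ terms to become $\omega_B$, and the moment map becomes $\Phi^{\mathfrak{c}}$ as in~\eqref{Eq:momap-Bo15}; crucially, the additions $\psi_{\zbar}$ and $\varpi_{\zbar}$ are untouched by this process (only the centre is modified by fusion in Lemma~\ref{Lem:FusPenc}). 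Non-degeneracy is preserved by fusion (Proposition~\ref{Pr:Fus}), and the correspondence~\eqref{Eq:corrPOm} between $(P_B + \psi_{\zbar})$ and $(\omega_B + \varpi_{\zbar})$ is inherited from the same correspondence before fusion, itself guaranteed by Proposition~\ref{Pr:Pencil-Cact}. This establishes items (1)--(3) simultaneously.

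The main obstacle is conceptual rather than computational: because no explicit formula for $P_B$ is available, the proof must work abstractly through the fusion construction of $(\tilde\omega_B,\tilde\Phi^{\mathfrak{c}})$, and rely on the compatibility of our $\CC^\times$-actions with that construction. The key observations that unlock the argument are that multipartiteness of each $Q_c$ rules out loops within a single color (ensuring the identification of $\Inf^{(c,s)}$ with the central infinitesimal action), and that the various $\CC^\times$-actions indeed sit in distinct central factors of $\tilde G$ before fusion, so that Proposition~\ref{Pr:Pencil-Cact} can be applied without further verification.
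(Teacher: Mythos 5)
Your proposal is correct and follows essentially the same route as the paper: apply Proposition~\ref{Pr:Pencil-Cact} to $M_{\overline{Q}}^{\mathfrak{c}}$ viewed before fusion as a variety for the group $\prod_{s\in I}\Gl_{n_s}^{C_{(s)}}$ (your $\tilde G$), identify the central $\CC^\times$-infinitesimal actions with the vector fields $\Inf^{(c,s)}$ and the moment map components with $\Phi_{c,s}$, and then transport the pencil through fusion. The paper's write-up is terser (it differentiates the explicit pre-fusion action formula to get $\Inf^{(c,r)}$ and concludes via Lemma~\ref{Lem:AltPenc} rather than invoking Lemma~\ref{Lem:FusPenc} by name), but the substance, including your observation that multipartiteness of each $Q_c$ guarantees the central actions are the expected ones, is the same.
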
 
\begin{proof}
 This is similar to the proof of Theorem \ref{Thm:Pencil-MQV}: we use Proposition \ref{Pr:Pencil-Cact} on the variety $M_{\overline{Q}}^{\mathfrak{c}}$ considered before fusion of the components for different colors, then conclude with Lemma \ref{Lem:AltPenc}.  
 Before fusion, we see $M_{\overline{Q}}^{\mathfrak{c}}$  as a $G$-variety for $G=\prod_{s\in I} \Gl_{n_s}^{C_{(s)}}$, where $C_{(s)} = \{c\in C \mid s\in I_c\}$.
For fixed $s$ and $c\in C_{(s)}$, the action of the component $\Gl_{n_s}$ of $\Gl_{n_s}^{C_{(s)}}$ corresponding to $c$ is given on a morphism $\Xtt_a:M_{\overline{Q}}^{\mathfrak{c}} \to \End(\CC^{\nfat})$, $a\in \overline{Q}$, by
\begin{equation} \label{Eq:Act-BoPf}
g\cdot \Xtt_a=\left\{
\begin{array}{ll}
\Xtt_a & \mathfrak{c}(a)\neq c  \\
g^{-\delta_{t(a),s}}\Xtt_a g^{\delta_{h(a),s}} & \mathfrak{c}(a)= c
\end{array}
\right. \,, \qquad g \in \Gl_{n_s}\,,
\end{equation}
where we denote by $g$ its embedding under $\Gl_{n_s}\hookrightarrow \Gl_{\nfat}=\prod_{s\in I} \Gl_{n_s}$ (without repeated factors for $c\in C_{(s)}$).
The component of the moment map corresponding to this $c$-th component $\Gl_{n_s}$ of $\Gl_{n_s}^{C_{(s)}}$ is $\Phi_{c,s}$ due to the construction that yields Theorem \ref{Thm:Bo15}, cf. \cite{Bo15}.
Moreover, differentiating \eqref{Eq:Act-BoPf} gives the infinitesimal action of $1\in \CC$ corresponding to the embedding of $\CC^\times$ in the $c$-th component $\Gl_{n_s}$ of $\Gl_{n_s}^{C_{(s)}}$, which coincides with $\Inf^{(c,r)}$ \eqref{Eq:Inf-cr}.
\end{proof}

In analogy with the original theory of Crawley-Boevey and Shaw \cite{CBShaw} given in \ref{ss:MQV}, Boalch defines the \emph{generalized multiplicative quiver variety} $\Mcal_{\overline{Q},\qfat,\theta}^{\mathfrak{c}}$ at a parameter $\qfat\in (\CC^\times)^I$ with stability $\theta\in \Q^I$ by reduction of $M_{\overline{Q}}^{\mathfrak{c}}$ as in Definition \ref{Def:MQV}.
One can then spell out the obvious analogue of Corollary \ref{Cor:Pencil-redMQV} as a consequence of Theorem \ref{Thm:Bo15-Pencil}.

Finally, if $C=Q$ and $\mathfrak{c}=\id_Q$, we get back to the original construction presented in \ref{ss:MQV} as discussed in \cite[\S5]{Bo15}.
Thus, Theorem \ref{Thm:Bo15-Pencil} gives back Theorem \ref{Thm:Pencil-MQV} in that situation.
Remark that the pencil obtained from the generalized theory depends on $\mathcal{C}(\mathcal{C}-1)/2=|Q|(2|Q|-1)$ parameters instead of $r_Q=|Q|(|Q|-1)/2$;
this is because infinitesimal vector fields appear twice with opposite signs, cf. the discussion after Theorem \ref{Thm:Pencil-MQV}.

\subsubsection{Quiver varieties} \label{ss:QuiVar}

For the interested reader, let us note the ``additive'' version of Proposition \ref{Pr:Pencil-Cact}, and how it applies to quiver varieties. 
Recall that a triple $(M,P,\mu)$ is a Hamiltonian Poisson $G$-variety if $P$ is a Poisson bivector and $\mu:M\to \g$ is a $G$-equivariant morphism satisfying $\xi_M=P^\sharp(\dd(\mu,\xi)_\g)$ for all $\xi\in \g$. 
If $P^\sharp_x$ defines an isomorphism $T^\ast_x M\to T_x M$ at each $x\in M$, there corresponds a unique symplectic form $\omega$ such that $P^\sharp\circ \omega^\flat=\Id_{TM}$; in particular $\omega^\flat(\xi_M)=\dd(\mu,\xi)_\g$.  

\begin{proposition}   \label{Pr:Pencil-Ham}
Assume that $(M,P,\mu)$ is a Hamiltonian Poisson $G$-variety, where $G$ admits a decomposition 
$G=G'\times \times_{j=1}^\ell G_j$ such that for each $1\leq j \leq \ell$, 
$\CC^\times$ embeds in the center of $G_j$ as diagonal matrices. 
For each $1\leq j \leq \ell$, denote by $\Inf^{(j)}$ the infinitesimal action of $1\in \CC$ associated with the embedding of $\CC^\times$ into $G_j$.  
Write the moment map as $\mu=(\mu',\mu_1,\ldots,\mu_\ell)$ with 
$\mu':M\to \g'$ and $\mu_j:M\to \g_j$, $1\leq j\leq \ell$. 
Furthermore, fix $\zbar=(z_{ij})_{1\leq i<j\leq \ell} \in \CC^{\ell(\ell-1)/2}$. 
\begin{enumerate} 
 \item The triple $(M,P+\psi_{\zbar},\mu)$ is a Hamiltonian Poisson $G$-variety for $\psi_{\zbar}$ defined by \eqref{Eq:PrPenc-psi}.  
 \item Assume that $P$ is non-degenerate and $\omega$ is the corresponding symplectic form. Let 
\begin{equation} \label{Eq:PrPenc-Omeg}
 \omega_{\zbar}:= \sum_{1\leq i<j\leq \ell} z_{ij}\, \tr(\dd\mu_i) \wedge \tr(\dd\mu_j)\,.
\end{equation}
 Then $P+\psi_{\zbar}$ is non-degenerate, and $(M,\omega+\omega_{\zbar},\mu)$ defines the corresponding (Hamiltonian) symplectic variety. 
\end{enumerate}
\end{proposition}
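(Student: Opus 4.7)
The strategy is to mirror the proof of Proposition~\ref{Pr:Pencil-Cact}, with the simplifications afforded by the linear (rather than multiplicative/quasi-Poisson) setting: no Cartan trivector $\phi_M$, no closed $3$-form $\eta$, and the moment map valued in $\g$ rather than $G$. The whole argument rests on two elementary observations: (i)~each $\Inf^{(j)}$ is the infinitesimal $G$-action attached to the central element $1_{\g_j}\in\g_j\subset\g$, and (ii)~each function $\tr\mu_k$ on $M$ equals $\langle \mu,1_{\g_k}\rangle_\g$ under the identification provided by the trace form.

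For part~(1), I first check the modified Jacobi identity $[P+\psi_{\zbar},P+\psi_{\zbar}]=0$. Expanding yields $[P,P]+2[P,\psi_{\zbar}]+[\psi_{\zbar},\psi_{\zbar}]$. The first term vanishes by assumption. The second vanishes because $P$ is $G$-invariant, hence $[P,\Inf^{(j)}]=-\mathcal{L}_{\Inf^{(j)}}P=0$ for all $j$. The third vanishes because the $\CC^\times$-actions pairwise commute (they sit in the abelian group $\times_j\CC^\times$ inside the center of $\times_j G_j$), so $[\Inf^{(i)},\Inf^{(j)}]=0$. Next I verify that $\mu$ remains a moment map for $P+\psi_{\zbar}$. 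Since $P$ already satisfies the moment map equation, it is enough to show $\psi_{\zbar}^\sharp(\dd\langle\mu,\xi\rangle_\g)=0$ for all $\xi\in\g$. Differentiating the $\Ad$-equivariance identity $\mu(g\cdot x)=\Ad_g\mu(x)$ in the direction of a central element $\chi\in\g$ gives $\chi_M(\langle\mu,\xi\rangle_\g)=\langle[\mu,\chi],\xi\rangle_\g=0$; applying this with $\chi=1_{\g_j}$ shows that each $\Inf^{(j)}$ annihilates $\langle\mu,\xi\rangle_\g$, whence $\psi_{\zbar}^\sharp(\dd\langle\mu,\xi\rangle_\g)=0$ as needed.

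For part~(2), I will establish the compatibility $(P+\psi_{\zbar})^\sharp\circ(\omega+\omega_{\zbar})^\flat=\Id_{TM}$; non-degeneracy and the fact that $\omega+\omega_{\zbar}$ is the associated symplectic form follow automatically. Expanding, I must show
\begin{equation*}
P^\sharp\omega_{\zbar}^\flat+\psi_{\zbar}^\sharp\omega^\flat+\psi_{\zbar}^\sharp\omega_{\zbar}^\flat=0,
\end{equation*}
since $P^\sharp\omega^\flat=\Id$ by hypothesis. The third term vanishes because $\tr\mu_k$ is $G$-invariant (it is the composition of $\mu_k$ with an $\Ad$-invariant function on $\g_k$), hence annihilated by each $\Inf^{(i)}$. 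For the remaining two terms, the key identities are
\begin{equation*}
P^\sharp(\tr\dd\mu_k)=\Inf^{(k)},\qquad \omega^\flat(\Inf^{(k)})=\tr\dd\mu_k,
\end{equation*}
obtained by applying the moment map equation and its dual $\omega^\flat(\xi_M)=\dd\langle\mu,\xi\rangle_\g$ to $\xi=1_{\g_k}$ and using $\langle\mu,1_{\g_k}\rangle_\g=\tr\mu_k$. A direct computation of $P^\sharp\omega_{\zbar}^\flat(X)$ and $\psi_{\zbar}^\sharp\omega^\flat(X)$ using these identities, together with the definition $\omega_{\zbar}=\sum_{i<j}z_{ij}\tr(\dd\mu_i)\wedge\tr(\dd\mu_j)$ and $\psi_{\zbar}=\sum_{i<j}z_{ij}\Inf^{(i)}\wedge\Inf^{(j)}$, shows that the two contributions are opposite, which is the desired cancellation.

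There is no real obstacle; the main care goes into tracking sign conventions for $\omega^\flat$ and the wedge-product sharps, and into justifying that $\langle\mu,1_{\g_k}\rangle_\g$ really coincides with $\tr\mu_k$ under the trace form restricted to $\g_k$ (which is immediate from $1_{\g_k}=\Id_{\g_k}$ in the embedding $\g_k\subset\gl_N$). As in the quasi-Poisson case, independence of the vector fields $\Inf^{(1)},\ldots,\Inf^{(\ell)}$ (when assumed) yields a Poisson pencil of order $\ell(\ell-1)/2$ centered at $P$.
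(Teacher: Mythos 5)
Your proposal is correct and follows exactly the route the paper intends: the paper's own proof is the single line ``adapt the proof of Proposition~\ref{Pr:Pencil-Cact} to the present setting,'' and your argument is precisely that adaptation, with the key identities $P^\sharp(\tr\dd\mu_k)=\Inf^{(k)}$ and $\omega^\flat(\Inf^{(k)})=\tr\dd\mu_k$ playing the role of their multiplicative counterparts in the quasi-Poisson proof. The only (harmless) omissions are the explicit remarks that $\psi_{\zbar}$ is $G$-invariant (because each $1_{\g_j}$ is central, so $[\xi_M,\Inf^{(j)}]=0$) and that $\omega_{\zbar}$ is closed (being a sum of wedges of exact forms), both of which are immediate.
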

\begin{proof}
This follows by adapting the proof of Proposition \ref{Pr:Pencil-Cact} to the present setting.  
\end{proof}

Let $Q$ be a quiver and $\nfat$ a dimension vector. 
Rephrasing Nakajima \cite{Nak} with the notation of \ref{ss:MQV}, the $\Gl_{\nfat}$-variety $M_{\overline{Q}}$ admits the following Poisson bivector and symplectic form 
\begin{align}
 P_{\mathtt{qv}}= \sum_{a\in Q} \, \tr[\del_a \wedge \del_{a^\ast}] \,, \quad
 \omega_{\mathtt{qv}}= - \sum_{a\in Q} \, \tr[\dd \Xtt_a \wedge \dd \Xtt_{a^\ast}] \,,
\end{align}
which correspond to one another. Their moment map is given by 
\begin{equation}
 \mu:M_{\overline{Q}}\to \gl_{\nfat}, \quad 
 \mu(\Xtt):= \sum_{a\in Q} (\Xtt_a \Xtt_{a^\ast} - \Xtt_{a^\ast} \Xtt_a)\,.
\end{equation}
We get the following result as a direct application of Proposition \ref{Pr:Pencil-Ham}, even though it is easy to check and certainly known to experts. 

\begin{theorem}   \label{Thm:Pencil-QV}
 Fix $\zbar=(z_{a,b})_{a<b}$ where $z_{a,b}\in \CC$ for each $a,b\in Q$ with $a<b$. 
Define  on $M_{\overline{Q}}$ the bivector $\psi_{\zbar}$ by \eqref{Eq:psiMQV},  and the $2$-form $\omega_{\zbar}$ by   
\begin{equation}
\omega_{\zbar}= \sum_{a<b} z_{a,b}\,  \tr \left[\dd (\Xtt_a \Xtt_{a^\ast})\right] \wedge 
\tr \left[\dd (\Xtt_b \Xtt_{b^\ast})\right] \,. \label{Eq:varomQV}
\end{equation}
Then the triple $(M_{\overline{Q}}, P_{\mathtt{qv}}+\psi_{\zbar},\mu)$ is a non-degenerate Hamiltonian Poisson variety such that  
$(M_{\overline{Q}}, \omega_{\mathtt{qv}}+\omega_{\zbar},\mu)$ is the corresponding Hamiltonian symplectic variety.  
\end{theorem}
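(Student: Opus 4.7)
I plan to apply Proposition~\ref{Pr:Pencil-Ham} in a ``separated'' picture, mirroring the strategy used in the proof of Theorem~\ref{Thm:Pencil-MQV} via Remark~\ref{Rem:MQV-fusion}. For each arrow $a \in Q$, the pair $(\Xtt_a, \Xtt_{a^\ast})$ parametrizes a cotangent bundle of $\Hom(\CC^{n_{h(a)}}, \CC^{n_{t(a)}})$, with symplectic form $-\tr(\dd \Xtt_a \wedge \dd \Xtt_{a^\ast})$ and Poisson bivector $\tr(\del_a \wedge \del_{a^\ast})$; it is a Hamiltonian $(\Gl_{n_{t(a)}} \times \Gl_{n_{h(a)}})$-variety with moment map $(\Xtt_a \Xtt_{a^\ast},\, -\Xtt_{a^\ast}\Xtt_a)$. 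Taking the direct product over $a \in Q$, I view $M_{\overline{Q}}$ as a Hamiltonian Poisson $G$-variety for $G := \prod_{a \in Q}\Gl_{n_{t(a)}} \times \Gl_{n_{h(a)}}$, with bivector $P_{\mathtt{qv}}$, symplectic form $\omega_{\mathtt{qv}}$, and $G$-moment map $\Phi_G = (\Xtt_a \Xtt_{a^\ast},\, -\Xtt_{a^\ast}\Xtt_a)_{a\in Q}$.

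In this set-up, for each $a \in Q$ the scalar embedding $\CC^\times \hookrightarrow \Gl_{n_{t(a)}}$, $\lambda \mapsto \lambda \Id$, into the ``tail'' factor attached to $a$ sits inside the center of one of the $G_j$'s in the notation of Proposition~\ref{Pr:Pencil-Ham}. Since this copy of $\Gl_{n_{t(a)}}$ only acts on $\Xtt_a$ and $\Xtt_{a^\ast}$ before fusion, a short sign-tracking computation using \eqref{Eq:Act-inf} shows its infinitesimal vector field equals $\Inf^{(a)} := \tr[\Xtt_{a^\ast}\del_{a^\ast} - \Xtt_a \del_a]$, exactly the scaling \eqref{Eq:Intro2}. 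The corresponding moment map component is $\mu_a = \Xtt_a \Xtt_{a^\ast} \in \gl_{n_{t(a)}}$, so $\tr(\dd \mu_a) = \tr[\dd(\Xtt_a \Xtt_{a^\ast})]$. Substituting these identifications into \eqref{Eq:PrPenc-psi} and \eqref{Eq:PrPenc-Omeg} reproduces $\psi_{\zbar}$ of \eqref{Eq:psiMQV} and $\omega_{\zbar}$ of \eqref{Eq:varomQV} on the nose.

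Proposition~\ref{Pr:Pencil-Ham} then delivers a Hamiltonian Poisson pencil of non-degenerate bivectors $P_{\mathtt{qv}} + \psi_{\zbar}$, with corresponding symplectic forms $\omega_{\mathtt{qv}} + \omega_{\zbar}$, on the $G$-variety $M_{\overline{Q}}$. It remains to restrict to the diagonal subgroup $\Gl_{\nfat} \hookrightarrow G$; in contrast with the quasi-Poisson setting (cf.\ Proposition~\ref{Pr:Fus}), no ``fusion'' correction is required in ordinary Poisson geometry, so the bivector, symplectic form, and pencil pieces all remain unchanged, and the $\Gl_{\nfat}$-moment map obtained by composing $\Phi_G$ with the dual of the diagonal inclusion $\gl_{\nfat} \hookrightarrow \mathfrak{g}$ is exactly $\mu = \sum_a(\Xtt_a \Xtt_{a^\ast} - \Xtt_{a^\ast}\Xtt_a)$. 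The only genuinely non-routine step I foresee is the sign-and-convention bookkeeping that identifies $\Inf^{(a)}$ with the central $\CC^\times$-infinitesimal action (and checks that $\psi_{\zbar}^\sharp(\dd\tr(\xi\mu))=0$ for $\xi\in\gl_{\nfat}$, which follows from $\Inf^{(a)}(\Xtt_a\Xtt_{a^\ast})=\Inf^{(a)}(\Xtt_{a^\ast}\Xtt_a)=0$); everything else is a direct plug-in into Proposition~\ref{Pr:Pencil-Ham}.
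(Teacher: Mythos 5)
Your proposal is correct and follows essentially the same route as the paper, which states Theorem \ref{Thm:Pencil-QV} as a direct application of Proposition \ref{Pr:Pencil-Ham}; the ``separated'' decomposition $M_{\overline{Q}}=\times_{a\in Q}Y_a$ with $G=\times_{a\in Q}\bigl(\Gl_{n_{t(a)}}\times\Gl_{n_{h(a)}}\bigr)$ and subsequent restriction to the diagonal $\Gl_{\nfat}$ is exactly the picture the paper has in mind (cf. Remark \ref{Rem:MixedPoi}). Your sign-tracking of $\Inf^{(a)}$, the identification $\mu_a=\Xtt_a\Xtt_{a^\ast}$, and the observation that no fusion correction arises in the ordinary Hamiltonian setting all check out.
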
 

By performing Hamiltonian reduction, we obtain a pencil of compatible Poisson brackets on quiver varieties. 

\begin{remark} \label{Rem:MixedPoi}
It was suggested by A. Alekseev that the Poisson structures defined in the previous two results can be obtained as special instances of a construction due to Lu and Mouquin \cite{LuM}.
Namely, for Proposition \ref{Pr:Pencil-Ham}, see $G$ as a Poisson-Lie group for the zero Poisson structure, so that $\g=\g' \oplus \bigoplus_{j=1}^\ell \g_j$ is a Lie bialgebra with cobracket $\delta_\g$ being trivially the sum of the zero cobrackets on $\g', \g_1,\ldots,\g_\ell$.
If $1^{(j)}\in \g$ denotes the image of $1\in \CC$ induced at the level of Lie algebras by the inclusions $\CC^\times \hookrightarrow G_j \hookrightarrow G$, then $\mathtt{t}=\sum_{i<j} z_{ij} 1^{(i)} \wedge 1^{(j)} \in \wedge^2 \g$ is a mixed twisting element \cite[Def.~3.1]{LuM}. It follows from \cite[Lem.~2.6]{LuM} that $(M,P+\mathtt{t}_M)$ is a Poisson $(\g,\delta_{\g,\mathtt{t}})$-variety for the twisting cobracket $\delta_{\g,\mathtt{t}}=\delta_\g + [\mathtt{t},-]$; but the latter vanishes while the action of $\g$ on $M$ sends $\mathtt{t}$ to $\mathtt{t}_M=\psi_{\zbar}$ \eqref{Eq:PrPenc-psi}, hence $(M,P+\psi_{\zbar})$ is a Poisson $G$-variety.

As for Theorem \ref{Thm:Pencil-QV}, consider for each $a\in Q$ the subquiver
$\Gamma_a= t(a) \underset{a^\ast}{\stackrel{a}{\rightleftarrows}}  h(a)$ of $\overline{Q}$.
We have an action of $G_a:=\Gl_{n_{t(a)}}\times \Gl_{n_{h(a)}}$ on
$Y_a:=M_{\Gamma_a}(n_{t(a)},n_{h(a)})$, cf. \eqref{Eq:Act-Gln}, thus $M_{\overline{Q}} = \times_{a\in Q} Y_a$ inherits an action of $\times_{a\in Q} G_a$ (it is ``before fusion'', see Remark \ref{Rem:MQV-fusion}).
Denote the Lie algebra of $G_a$ by $\g_a$, and see $\g=\oplus_{a\in Q} \g_a$ as a Lie bialgebra for the zero cobracket as above. Let $1^{(a)}$ be the image of $(\Id_{n_{t(a)}},0_{n_{h(a)}})$ under the inclusion $\g_a \hookrightarrow \g$.
Again, $\mathtt{t}=\sum_{a<b} z_{ab} 1^{(a)} \wedge 1^{(b)}$ is mixed twisting and induces $\psi_{\zbar}$ \eqref{Eq:psiMQV}.
Hence $P_{\mathtt{qv}}+\psi_{\zbar}$ is a mixed product Poisson structure on $M_{\overline{Q}}$ for the action of $\times_{a\in Q} G_a$, cf. \cite[Prop.~3.4]{LuM}.
Factoring the action of $\Gl_{\nfat}$ through $\times_{a\in Q} G_a$ by taking diagonal embeddings yields the Poisson $\Gl_{\nfat}$-variety structure considered in Theorem \ref{Thm:Pencil-QV}.
\end{remark}


\section{Noncommutative origin of the pencil} \label{Sec:NC}

It was realised by Van den Bergh \cite{VdB1} that the Poisson bracket obtained on any affine multiplicative quiver variety 
$\Mcal_{\overline{Q},\qfat,0}$ through Corollary \ref{Cor:qPqHam-redMQV} has a noncommutative origin which is defined at the level of the quiver $\overline{Q}$.  
We shall explain this statement in the presence of a deformation parameter as in \ref{ss:DefoMomap}, then we describe how to adapt this point of view for the whole Poisson pencil. 

\medskip 

Fix an associative unital $\CC$-algebra $A$, and consider $A\otimes A$ as a $\CC$-algebra for the product satisfying $(f_1\otimes f_2)(f_3\otimes f_4)=f_1 f_3 \otimes f_2 f_4$, $f_1,\ldots,f_4\in A$. Denote by $\tau_{(12)}\in \End(A\otimes A)$ the operation that exchanges the two tensor factors. 
An \emph{outer double bracket}\footnote{This is simply called a \emph{double bracket} by Van den Bergh \cite{VdB1}. 
We dub this operation as \emph{outer} because the derivation rule of the second argument is defined thanks to the outer $A$-bimodule structure on $A\otimes A$ which is determined by $f_1\ast (g\otimes h) \ast f_2 = f_1 g \otimes h f_2$ for any $f_1,f_2,g,h\in A$. 
Below, we shall use \emph{right double brackets} \cite{FMC} based on the right $A$-bimodule structure $f_1\cdot_r (g\otimes h) \cdot_r f_2 = g \otimes f_1 h f_2$; hence qualifying the type of double bracket removes any possible confusion.}  
is a bilinear map $\dgal{-,-}_o:A\times A \to A\otimes A$ such that for $f,g,h\in A$, 
\begin{subequations}
  \begin{align}
    \dgal{f,g}_o&=-\,\tau_{(12)} \dgal{g,f}_o \,, \\
    \dgal{f,gh}_o&=(g\otimes 1) \dgal{f,h}_o + \dgal{f,g}_o (1\otimes h)\,, \label{Eq:dbr-der1}\\
    \dgal{fg,h}_o&=(1\otimes f) \dgal{g,h}_o + \dgal{f,h}_o (g\otimes 1)\,. \label{Eq:dbr-der2}
  \end{align}
\end{subequations}
If $A$ is a $B$-algebra, then we require the outer double bracket to be $B$-bilinear, i.e. $\dgal{A,B}_o=0$. Due to the two derivation rules \eqref{Eq:dbr-der1}--\eqref{Eq:dbr-der2}, it suffices to specify an outer double bracket on generators of $A$. 
For our purpose, take $A=\CC \overline{Q}$ the path algebra of a double quiver $\overline{Q}$ endowed with a total ordering as in \ref{ss:MQV}; the ordering induces a skewsymmetric map $o(-,-):\overline{Q}\times \overline{Q}\to \{-1,0,1\}$ by setting $o(a,b)=+1$ if $a<b$. 
We see $A$ as a $B$-algebra for $B=\oplus_{s\in I} \CC e_s$. We fix constants $(\gamma_a)_{a\in \overline{Q}}$ as in \ref{ss:DefoMomap} (thus $\gamma_{a^\ast}=\gamma_a$ for all $a\in \overline{Q}$). 
Following \cite[\S3.2]{Fa21} (based on \cite[\S6.7]{VdB1}), we consider the $B$-linear outer double bracket that satisfies for $a,b\in \overline{Q}$: 
\begin{equation}
 \begin{aligned} \label{Eq:dgal-VdB}
  \dgal{a,b}_o&= \epsilon(a) \, \delta_{(b=a^\ast)} \left[ 
  \gamma_a e_{h(a)}\otimes e_{t(a)} + \frac12 ba \otimes e_{t(a)} + \frac12 e_{h(a)} \otimes ab \right] \\
&\quad -\frac12 \, o(a,b) \, e_{t(b)}a \otimes e_{t(a)}b - \frac12\,  o(a^\ast,b^\ast) \, b e_{h(a)} \otimes a e_{h(b)} \\
&\quad + \frac12\, o(a,b^\ast)\, ba \otimes e_{t(a)} + \frac12 \, o(a^\ast,b)\, e_{h(a)} \otimes ab\,.
 \end{aligned}
\end{equation}
By localisation \cite[Prop.~2.5.3]{VdB1}, the outer double bracket descends to $A_Q:= \CC \overline{Q}_{S^\gamma}$ obtained by adding inverses to the elements in $S:=\{\gamma_a 1+aa^\ast \mid a \in \overline{Q}\}$. 

An important result is that the pair $(A_Q, \dgal{-,-}_o)$ is a double quasi-Poisson algebra \cite[Theorem~3.3]{Fa21}. Without unravelling the definition (cf. \cite[\S5.1]{VdB1}) let us only give consequences of this fact.
Put $H_0(A_Q)=A_Q/[A_Q,A_Q]$, which is a vector space coming with a linear map $\tr:A_Q\to H_0(A_Q)$ that sends commutators to zero. 
We get from \cite[Lem.~5.1.3]{VdB1} that $H_0(A_Q)$ is equipped with a Lie bracket, denoted $\brVdB{-,-}$, uniquely determined by 
\begin{equation} \label{Eq:dbr-VdB}
 \brVdB{\tr(f),\tr(g)}= \tr ( \mathrm{m}(\dgal{f,g}_o)), \quad f,g\in A_Q,
\end{equation}
where $\mathrm{m}:A_Q\otimes A_Q \to A_Q$ is the multiplication. In particular, \eqref{Eq:dbr-VdB} extends to a Poisson bracket on $\Sym(H_0(A_Q))$, which we also denote by $\brVdB{-,-}$. 
The following result follows from \cite[\S7.7,7.13]{VdB1} and the fact that the representation space $\Rep(A_Q,\nfat)$ coincides with $M_{\overline{Q}}^\gamma(\nfat)$ for any $\nfat\in \Z^I_{\geq 0}$. 

\begin{proposition} \label{Pr:MorRepAQ-1}
 The morphism of algebras 
\begin{equation} \label{Eq:MorRepAQ}
 \Xtt_{\tr}: \Sym(H_0(A_Q)) \to \CC[M_{\overline{Q}}^\gamma/\!/\Gl_{\nfat}], \quad 
 \tr(a_1 \cdots a_k) \mapsto \tr(\Xtt_{a_1} \cdots \Xtt_{a_k}) \,\, \text{ for }a_1,\ldots,a_k\in \overline{Q}\,,
\end{equation}
is a morphism of Poisson algebras if $\Sym(H_0(A_Q))$ is equipped with $\brVdB{-,-}$ \eqref{Eq:dbr-VdB} and $\CC[M_{\overline{Q}}^\gamma/\!/\Gl_{\nfat}]$ is equipped with $P^\gamma$ \eqref{Eq:qP-MQV-bis}. 
\end{proposition}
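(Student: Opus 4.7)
The plan is to rely on Van den Bergh's representation functor \cite[\S7]{VdB1}, which turns any $B$-linear outer double bracket on a $B$-algebra $A$ into a $\Gl_{\nfat}$-invariant bivector on $\Rep(A, \nfat)$. Concretely, the Poisson bracket of matrix entries $(\Xtt_f)_{ij}$ and $(\Xtt_g)_{kl}$ produced by the functor is read off from the tensor components of $\dgal{f, g}_o \in A \otimes A$. Granted this functor, the proof splits into two independent tasks.

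First, I would match the double bracket \eqref{Eq:dgal-VdB} with the bivector \eqref{Eq:qP-MQV-bis} at the level of generators. It suffices to compute $\{(\Xtt_a)_{ij}, (\Xtt_b)_{kl}\}_{P^\gamma}$ directly from \eqref{Eq:qP-MQV-bis} for each pair $a, b \in \overline{Q}$ and to compare it with the image of $\dgal{a, b}_o$ under the functor. The diagonal contribution of \eqref{Eq:dgal-VdB} (the term carrying $\delta_{(b = a^\ast)}$) should reproduce the first sum in \eqref{Eq:qP-MQV-bis}, while the off-diagonal $o(-,-)$-dependent terms should reproduce the fusion-type corrections in the second sum. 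For $\gamma = (1, \ldots, 1)$ this computation is essentially carried out in \cite[\S6.7, \S7.13]{VdB1}; here I would merely track the parameter $\gamma_a$, which only enters the first diagonal term of \eqref{Eq:dgal-VdB} and corresponds precisely to the substitution $\Id \mapsto \gamma_a \Id$ in \eqref{Eq:qP-MQV-bis}.

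Second, I would invoke the compatibility \eqref{Eq:dbr-VdB} between $\brVdB{-,-}$ on $H_0(A_Q)$ and $\mathrm{m} \circ \dgal{-,-}_o$. Under the representation functor this translates (cf.\ \cite[Lem.~7.5.1, Prop.~7.7.1]{VdB1}) into the identity
\begin{equation*}
 \{\tr(\Xtt_f), \tr(\Xtt_g)\}_{P^\gamma} = \tr(\Xtt_{\mathrm{m}(\dgal{f, g}_o)}), \qquad f, g \in A_Q,
\end{equation*}
which says exactly that $\Xtt_{\tr}$ is a Lie algebra morphism on the generating set $\{\tr(f) \mid f \in A_Q\}$ of $\Sym(H_0(A_Q))$; the Leibniz rule on both sides then propagates this identity to the whole symmetric algebra.

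The main obstacle I anticipate is the first step: the identification between \eqref{Eq:dgal-VdB} and \eqref{Eq:qP-MQV-bis} is conceptually transparent but computationally cumbersome due to the ordering-dependent terms on both sides. A subsidiary point concerns the passage from the quasi-Poisson bivector $P^\gamma$ to a genuine Poisson bracket on the target algebra: the Schouten defect $[P^\gamma, P^\gamma] = \phi_{M_{\overline{Q}}^\gamma}$ is built from infinitesimal $\gl_{\nfat}$-actions (cf.\ \eqref{Eq:Act-inf}) and hence annihilates any pairing with $\Gl_{\nfat}$-invariant functions, so the bracket induced on $\CC[M_{\overline{Q}}^\gamma /\!/ \Gl_{\nfat}] = \CC[M_{\overline{Q}}^\gamma]^{\Gl_{\nfat}}$ satisfies the Jacobi identity and $\Xtt_{\tr}$ indeed lands in a Poisson subalgebra.
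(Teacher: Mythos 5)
Your proposal is correct and follows essentially the same route as the paper, which simply derives the statement from Van den Bergh's representation functor \cite[\S7.7, 7.13]{VdB1} (the generator-level match between \eqref{Eq:dgal-VdB} and \eqref{Eq:qP-MQV-bis}, the trace identity $\{\tr(\Xtt_f),\tr(\Xtt_g)\}=\tr(\Xtt_{\mathrm{m}(\dgal{f,g}_o)})$, and the identification $\Rep(A_Q,\nfat)=M_{\overline{Q}}^\gamma(\nfat)$). Your closing observation that the trivector $\phi_{M}$ annihilates $\Gl_{\nfat}$-invariant functions, so the induced bracket on the quotient is genuinely Poisson, is the standard quasi-Poisson reduction argument and is a reasonable point to make explicit.
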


\begin{remark} \label{Rem:RepAQ}
 Note that \eqref{Eq:MorRepAQ} is surjective by Le Bruyn-Procesi's theorem \cite{LP90}. Thus, the pair $(\Sym(H_0(A_Q)),\brVdB{-,-})$ is a ``universal'' Poisson algebra, in the sense that it induces the Poisson structure on each variety $M_{\overline{Q}}^\gamma/\!/\Gl_{\nfat}$ (hence on any affine multiplicative quiver variety).
 By \eqref{Eq:dbr-VdB}, all computations can be performed using the pair $(A_Q,\dgal{-,-}_o)$, which is why it can be seen as a \emph{noncommutative origin} for these Poisson structures. 
\end{remark}

To understand the bivector $\psi_{\zbar}$ \eqref{Eq:varpiMQV} at the level of $\overline{Q}$, we need an analogue of Van den Bergh's outer double brackets. 
Taking an arbitrary $A$ as above, A \emph{right double bracket} \cite{FMC} 
is a bilinear map $\dgal{-,-}_r:A\times A \to A\otimes A$ such that for $f,g,h\in A$, 
\begin{subequations}
  \begin{align}
    \dgal{f,g}_r&=-\,\tau_{(12)} \dgal{g,f}_r \,, \label{Eq:dbrR-skew} \\
    \dgal{f,gh}_r&=(1\otimes g) \dgal{f,h}_r + \dgal{f,g}_r (1\otimes h)\,, \label{Eq:dbrR-der1}\\
    \dgal{fg,h}_r&=(f\otimes 1) \dgal{g,h}_r + \dgal{f,h}_r (g\otimes 1)\,. \label{Eq:dbrR-der2}
  \end{align}
\end{subequations}
As in the outer case, we can assume $B$-bilinearity if $A$ is a $B$-algebra, and it suffices to specify the right double bracket on generators.  
Examples of right double brackets can be easily constructed using the following result. 

\begin{lemma} \label{Lem:DbrR}
 Let $Q'$ be an arbitrary quiver with a total ordering $<'$ on its arrows. 
For any $a,b\in Q'$ with $a<' b$, fix $y_{ab} \in \CC$ and extend these constants to a skewsymmetric matrix $(y_{ab})_{a,b\in Q'}$ by setting $y_{aa}=0$ and $y_{ab}=-y_{ba}$ if $b<'a$. 

\begin{enumerate}
 \item There is a unique $B'$-linear right double bracket on $\CC Q'$, where $B'=\oplus_{s\in I'} \CC e_s$, such that for $a,b\in Q'$, $\dgal{a,b}_r= y_{ab} \, a \otimes b$. 
 \item Given a right double bracket $\dgal{-,-}_r$ on an algebra $A$, introduce the multilinear map
 \begin{equation}
  \begin{aligned}  \label{Eq:Trilin}
\dgal{-,-,-}_r&:    A^{\times 3} \to A^{\otimes 3}\,, \\
\dgal{-,-,-}_r&:= \sum_{k=0,1,2} \tau_{(123)}^{k} \circ (\dgal{-,-}_r \otimes \id_A) \circ (\id_A \otimes \dgal{-,-}_r) \circ \tau_{(123)}^{-k}\,,
  \end{aligned}
 \end{equation}
where, for a permutation $\rho \in S_3$, we denote by $\tau_\rho: A^{\otimes 3} \to A^{\otimes 3}$ the corresponding action by permutation of tensor factors. 
Then the right double bracket from part $\mathrm{(1)}$ on $\CC Q'$  is $(12)$-weak Poisson \cite[Def.~2.7]{FMC}, i.e.
\begin{equation} \label{Eq:TrilinWk}
 \dgal{-,-,-}_r - \tau_{(12)}^{-1} \circ \dgal{-,-,-}_r \circ \tau_{(12)} = 0\,.
\end{equation}
\item The right double bracket from $\mathrm{(1)}$  descends to any localisation $A=\CC Q'_S$ at a set $S\subset \CC Q'$,  where it is $(12)$-weak Poisson. 
\end{enumerate}
\end{lemma}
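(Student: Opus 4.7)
The plan is organized around the three parts. For part (1), I view $\CC Q'$ as the tensor algebra over $B':=\bigoplus_{s\in I'}\CC e_s$ of the $B'$-bimodule $V:=\bigoplus_{a\in Q'}\CC a$ (where $a=e_{t(a)}ae_{h(a)}$). Uniqueness is immediate: the Leibniz rules \eqref{Eq:dbrR-der1}--\eqref{Eq:dbrR-der2} together with $B'$-bilinearity force $\dgal{w_1,w_2}_r$ on any pair of monomials to be determined by the values on pairs of arrows, hence by the constants $y_{ab}$. For existence, I first check that the prescription $\dgal{a,b}_r=y_{ab}\,a\otimes b$ on arrows (together with $\dgal{B',\CC Q'}_r=0=\dgal{\CC Q',B'}_r$) is compatible with $B'$-bilinearity---for instance, expanding $\dgal{a,e_sb}_r$ via \eqref{Eq:dbrR-der1} yields $\delta_{s,t(b)}y_{ab}\,a\otimes b$, matching the direct value since $e_sb=\delta_{s,t(b)}b$---and with \eqref{Eq:dbrR-skew} thanks to the antisymmetry of $(y_{ab})$. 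Standard induction on path length using \eqref{Eq:dbrR-der1}--\eqref{Eq:dbrR-der2} then produces a well-defined $B'$-linear right double bracket on $\CC Q'$, with skewsymmetry propagating automatically.

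For part (2), the first step is to verify \eqref{Eq:TrilinWk} on triples of arrows $a,b,c\in Q'$. Computing the three summands of \eqref{Eq:Trilin}, with careful tracking of the cyclic conjugations by $\tau_{(123)}$, gives
\[
\dgal{a,b,c}_r=(y_{ab}y_{bc}+y_{bc}y_{ca}+y_{ca}y_{ab})\,a\otimes b\otimes c,
\]
while $\tau_{(12)}^{-1}\circ\dgal{-,-,-}_r\circ\tau_{(12)}$ evaluated at $(a,b,c)$ produces the same tensor times the coefficient $y_{ba}y_{ac}+y_{ac}y_{cb}+y_{cb}y_{ba}$. Using $y_{ba}=-y_{ab}$, $y_{ac}=-y_{ca}$, $y_{cb}=-y_{bc}$, the two coefficients agree, so \eqref{Eq:TrilinWk} holds on arrows. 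The main obstacle is then propagating this identity to all of $\CC Q'$. My plan is to invoke the right-double-bracket analogue of Van den Bergh's ``Jacobi propagation'' principle from \cite[\S2.4]{VdB1}, as developed in \cite{FMC}: the map $\Psi:=\dgal{-,-,-}_r-\tau_{(12)}^{-1}\circ\dgal{-,-,-}_r\circ\tau_{(12)}$ is a triple derivation in each of its three arguments with respect to the bimodule structures dictated by \eqref{Eq:dbrR-der1}--\eqref{Eq:dbrR-der2}. Any $B'$-linear such map vanishing on $Q'\times Q'\times Q'$ then vanishes identically, giving $\Psi=0$ on $\CC Q'$.

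For part (3), I extend the right double bracket to $\CC Q'_S$ by the standard localisation formula $\dgal{f,s^{-1}}_r=-(1\otimes s^{-1})\dgal{f,s}_r(1\otimes s^{-1})$ together with the first-slot counterpart, both forced by differentiating the relations $s\cdot s^{-1}=e_{t(s)}$ through \eqref{Eq:dbrR-der1}--\eqref{Eq:dbrR-der2} and $B'$-bilinearity; uniqueness and preservation of \eqref{Eq:dbrR-skew} are automatic. The $(12)$-weak Poisson property then transports to the localisation via the same triple-derivation argument used in part (2): each inverse $s^{-1}$ is governed by the Leibniz framework, so $\Psi=0$ on $\CC Q'$ forces $\Psi=0$ on $\CC Q'_S$.
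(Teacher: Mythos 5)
Your proposal is correct and follows essentially the same route as the paper's proof: part (1) by reducing existence/uniqueness to checking skewsymmetry on generators, part (2) by computing the coefficient $y_{ab}y_{bc}+y_{bc}y_{ca}+y_{ca}y_{ab}$ on triples of arrows and invoking the derivation property of the weak-Jacobiator (the paper cites \cite[Lem.~3.12]{FMC} for exactly this propagation step), and part (3) by the forced localisation formulas plus the same derivation argument. The only difference is that you spell out the tensor-algebra/induction details in part (1) more explicitly than the paper does.
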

\begin{proof}
 For (1), it suffices to check that the skewsymmetry rule \eqref{Eq:dbrR-skew} holds on generators. But this is a direct consequence of the defining identity  $\dgal{a,b}_r= y_{ab} \, a \otimes b$, $a,b\in Q'$, because $(y_{ab})_{a,b\in Q'}$ is skewsymmetric. (Note that, by requiring $B'$-linearity, $\dgal{\CC Q',e_s}_r=0=\dgal{e_s,\CC Q'}_r$ for $s\in I'$.)
 
For (2), we use that the map $\dgal{-,-,-}_r - \tau_{(12)}^{-1} \circ \dgal{-,-,-}_r \circ \tau_{(12)}$ is a derivation in each argument (for specific $A$-bimodule structures on $A^{\otimes 3}$) by \cite[Lem.~3.12]{FMC}. 
Hence we only have to verify \eqref{Eq:TrilinWk} on generators $a,b,c\in Q'$, cf. \cite[Lem.~2.11]{FMC}. 
We first compute 
\begin{equation}
(\dgal{-,-}_r \otimes \id_A) \circ (\id_A \otimes \dgal{-,-}_r) (a,b,c)
=y_{bc} \dgal{a,b}_r \otimes c = y_{ab} y_{bc} \, a\otimes b\otimes c\,.
\end{equation}
Therefore, by \eqref{Eq:Trilin}, we obtain
\begin{equation}
 \dgal{a,b,c}_r= \big( y_{ab} y_{bc} + y_{bc} y_{ca} + y_{ca} y_{ab} \big)\, a\otimes b\otimes c\,.
\end{equation}
This gives at once 
\begin{equation}
\tau_{(12)} \dgal{b,a,c}_r= \big( y_{ba} y_{ac} + y_{ac} y_{cb} + y_{cb} y_{ba} \big)\, a\otimes b\otimes c \,,
\end{equation}
which equals $\dgal{a,b,c}_r$ as $(y_{ab})_{a,b\in Q'}$ is skewsymmetric. Thus \eqref{Eq:TrilinWk} holds on the triple $(a,b,c)$. 

For (3), the right double bracket uniquely descends to $\CC Q'_S$ since we need  
$$\dgal{a,s^{-1}}_r=-(1\otimes s^{-1}) \dgal{a,s}_r(1\otimes s^{-1}), \quad 
\dgal{s^{-1},a}_r=-( s^{-1} \otimes 1) \dgal{s,a}_r(s^{-1} \otimes 1),$$ 
as a consequence of \eqref{Eq:dbrR-der1}--\eqref{Eq:dbrR-der2} for any $a\in \CC Q'$ and $s\in S$. 
Using again that the map on the left-hand side of \eqref{Eq:TrilinWk} is a derivation by \cite[Lem.~3.12]{FMC}, its value on $(\CC Q'_S)^{\times 3}$ can be computed using its value on $(\CC Q')^{\times 3}$, where it vanishes.
\end{proof}

For our purpose, we return to the case of $\overline{Q}$ with a total ordering as in \ref{ss:MQV}. For any $a,b\in Q$ with $a<b$, fix $z_{ab}\in \CC$, and extend these constants to a skewsymmetric matrix $(z_{ab})_{a,b\in \overline{Q}}$, which is symmetric under the involution $a\mapsto a^\ast$, by setting for any $a,b\in Q$: 
\begin{equation}
 \begin{aligned} \label{Eq:zab-Ext}
   &z_{aa}=0, \quad z_{ab}=-z_{ba} \quad \text{ if }b<a\,, \\
 &z_{a^\ast b}=z_{a b^\ast}=z_{a^\ast b^\ast}=z_{ab}\,.
 \end{aligned}
\end{equation}

We can then use Lemma \ref{Lem:DbrR} with $Q'=\overline{Q}$ to get a right double $(12)$-weak Poisson bracket on $A_Q=\CC \overline{Q}_{S^\gamma}$  
which is $B$-linear and uniquely determined by 
\begin{equation} \label{Eq:dbrR-Q}
 \dgal{a,b}_r= \epsilon(a)\, \epsilon(b)\, z_{ab} \, a \otimes b\,, \quad a,b\in \overline{Q}.
\end{equation}
Indeed, this corresponds to taking the constants $y_{ab}= \epsilon(a) \epsilon(b) z_{ab}$. 
Thanks to \cite[Prop.~3.15]{FMC}, the $(12)$-weak Poisson property guarantees that  $\Sym(H_0(A_Q))$ is equipped with a Poisson bracket, denoted $\br{-,-}_{\zbar}$, uniquely determined by 
\begin{equation} \label{Eq:dbr-Z}
 \br{\tr(f),\tr(g)}_{\zbar}= (\tr\otimes \tr) (\dgal{f,g}_r), \quad f,g\in A_Q\,.
\end{equation}
Remark that, as opposed to \eqref{Eq:dbr-VdB}, the bracket \eqref{Eq:dbr-Z} maps $H_0(A_Q)\times H_0(A_Q)$ into $\Sym^2(H_0(A_Q))$, hence it is \emph{not} obtained from a Lie bracket on $H_0(A_Q)$.  
The following result is the analogue of Proposition \ref{Pr:MorRepAQ-1}. 

\begin{proposition} \label{Pr:MorRepAQ-2}
 The morphism of algebras $\Xtt_{\tr}: \Sym(H_0(A_Q)) \to \CC[M_{\overline{Q}}^\gamma/\!/\Gl_{\nfat}]$ \eqref{Eq:MorRepAQ} 
is a morphism of Poisson algebras if $\Sym(H_0(A_Q))$ is equipped with $\br{-,-}_{\zbar}$ \eqref{Eq:dbr-Z} and $\CC[M_{\overline{Q}}^\gamma/\!/\Gl_{\nfat}]$ is equipped with $\psi_{\zbar}$  \eqref{Eq:psiMQV}. 
\end{proposition}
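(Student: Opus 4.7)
The plan is to reduce the Poisson compatibility to a check on generators $\tr(f), \tr(g)$ of $\Sym(H_0(A_Q))$, where $f, g \in A_Q$ are closed paths. Since $\Xtt_{\tr}$ is an algebra morphism and both brackets are biderivations, this reduction is automatic. The guiding observation is that both sides encode the same commuting family of $\CC^\times$-actions: on the noncommutative side the right double bracket $\dgal{-, -}_r$ from \eqref{Eq:dbrR-Q} is diagonal in the arrow grading (each arrow is a joint eigenvector), while on the geometric side $\psi_{\zbar}$ \eqref{Eq:psiMQV} is an antisymmetric combination of the vector fields $\Inf^{(c)}$ generating those $\CC^\times$-actions on $M_{\overline{Q}}^\gamma$.

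First, for each $c \in Q$, I would introduce the $B$-linear derivation $\delta_c$ of $\CC\overline{Q}$ with $\delta_c(a) = \epsilon(a)\, a$ for $a \in \{c, c^\ast\}$ and $\delta_c(a) = 0$ otherwise. Since $\delta_c(c c^\ast) = c c^\ast - c c^\ast = 0$, one has $\delta_c(\gamma_a + a a^\ast) = 0$ for every $a \in \overline{Q}$, so $\delta_c$ extends uniquely to $A_Q$ and descends to $H_0(A_Q)$. The key intermediate claim is the closed formula
\begin{equation} \label{Eq:pf-master}
\dgal{f, g}_r = \sum_{c, d \in Q} z_{cd}\, \delta_c(f) \otimes \delta_d(g), \qquad f, g \in A_Q\,,
\end{equation}
which I would prove on monomials in $\CC\overline{Q}$ by induction on word length using the derivation rules \eqref{Eq:dbrR-der1}--\eqref{Eq:dbrR-der2} and the symmetry \eqref{Eq:zab-Ext}, then transport to $A_Q$ by noting that both sides of \eqref{Eq:pf-master} are right double brackets on $A_Q$ (via Lemma \ref{Lem:DbrR}(3) on the left, and the fact that $\delta_c$ is a derivation on the right) uniquely determined by their values on the generators.

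On the geometric side, a direct computation from \eqref{Eq:Act-Ct-inf} gives the identity $\Inf^{(c)}(\tr(\Xtt_h)) = -\Xtt_{\tr}(\tr(\delta_c(h)))$ for every $h \in A_Q$. Applying $\tr \otimes \tr$ to \eqref{Eq:pf-master} rewrites the noncommutative bracket as $\br{\tr(f), \tr(g)}_{\zbar} = \sum_{c, d \in Q} z_{cd}\, \tr(\delta_c(f))\, \tr(\delta_d(g))$ in $\Sym(H_0(A_Q))$. Meanwhile, expanding $\psi_{\zbar}(\dd \tr(\Xtt_f), \dd \tr(\Xtt_g))$ via \eqref{Eq:psiMQV} and regrouping using $z_{cc} = 0$ and $z_{cd} = -z_{dc}$ yields
\[
\br{\tr(\Xtt_f), \tr(\Xtt_g)}_{\psi_{\zbar}} = \sum_{c, d \in Q} z_{cd}\, \Xtt_{\tr}(\tr(\delta_c(f)))\, \Xtt_{\tr}(\tr(\delta_d(g)))\,,
\]
which is precisely $\Xtt_{\tr}(\br{\tr(f), \tr(g)}_{\zbar})$. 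This establishes the Poisson compatibility on generators, and hence on all of $\Sym(H_0(A_Q))$.

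The main obstacle is the careful bookkeeping of signs among $\epsilon$, the $\CC^\times$-weights, the extension \eqref{Eq:zab-Ext}, and the antisymmetrization built into $\psi_{\zbar}$; beyond this sign accounting, the only delicate point is the transfer of \eqref{Eq:pf-master} from $\CC\overline{Q}$ to the localization $A_Q$, which is granted by Lemma \ref{Lem:DbrR}(3) together with the observation that every $\delta_c$ kills the denominators $\gamma_a + a a^\ast$. All other steps are routine applications of the derivation rules for right double brackets and for vector fields on $M_{\overline{Q}}^\gamma$.
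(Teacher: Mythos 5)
Your proposal is correct, and it takes a genuinely different (more self-contained) route than the paper. The paper's proof first equips $\CC[M_{\overline{Q}}^\gamma]$ with the antisymmetric biderivation determined on matrix entries by \eqref{Eq:PsiExpl}, invokes the representation-functor machinery of \cite{FMC} (Theorem~4.11 and equation~(4.7) there) to conclude both that this is a Poisson bracket descending to invariants and that $\Xtt_{\tr}$ intertwines it with $\br{-,-}_{\zbar}$, and then reduces the whole statement to checking that this biderivation coincides with $\psi_{\zbar}$ \eqref{Eq:psiMQV} on the generators $(\Xtt_a)_{ij}$ via the extension rules \eqref{Eq:zab-Ext}. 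You instead bypass the citation of \cite{FMC} entirely: your closed formula $\dgal{f,g}_r=\sum_{c,d}z_{cd}\,\delta_c(f)\otimes\delta_d(g)$ in terms of the weight derivations $\delta_c$ is easily verified (both sides are right double brackets agreeing on arrows, and $\delta_c$ kills the localizing set since $\delta_c(cc^\ast)=0$), the identity $\Inf^{(c)}(\tr(\Xtt_h))=-\Xtt_{\tr}(\tr(\delta_c(h)))$ follows from \eqref{Eq:Act-Ct-inf}, and the two minus signs cancel in the bilinear pairing, so the match on trace generators is immediate; the reduction to such generators is legitimate because both brackets are biderivations and the $\tr(f)$ generate $\Sym(H_0(A_Q))$. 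What your approach buys is independence from the external results on the representation functor for right double brackets, at the cost of proving the extra structural lemma; what the paper's approach buys is brevity, and it records along the way the explicit formula \eqref{Eq:PsiExpl} for $\psi_{\zbar}$ on matrix entries, which is reused implicitly elsewhere (e.g.\ in Section~\ref{S:ApplRS}). Your formula $\br{\tr f,\tr g}_{\zbar}=\sum_{c,d}z_{cd}\tr(\delta_c f)\tr(\delta_d g)$ is in fact the same weight computation that the paper redoes by hand in the proof of Theorem~\ref{Thm:MorRepAQ}, so your intermediate lemma would streamline that later argument as well.
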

\begin{proof}
We can equip $\CC[M_{\overline{Q}}^\gamma]$ with a unique antisymmetric biderivation satisfying for $a,b\in \overline{Q}$, 
\begin{equation} \label{Eq:PsiExpl}
 \br{(\Xtt_a)_{ij},(\Xtt_b)_{kl}}:=\epsilon(a)\, \epsilon(b)\, z_{ab} \,  (\Xtt_a)_{ij}\, (\Xtt_b)_{kl}\,.
\end{equation}
By \cite[Theorem~4.11]{FMC} and \eqref{Eq:dbrR-Q}, the operation $\br{-,-}$ is a Poisson bracket, 
and it descends to $\CC[M_{\overline{Q}}^\gamma/\!/\Gl_{\nfat}]$. 
We remark in view of \cite[(4.7)]{FMC} that 
 $\Xtt_{\tr}$ is a morphism of Poisson algebras if $\CC[M_{\overline{Q}}^\gamma/\!/\Gl_{\nfat}]$ is equipped with that Poisson bracket and $\Sym(H_0(A_Q))$ is equipped with $\br{-,-}_{\zbar}$~\eqref{Eq:dbr-Z}. 
Thus, we need to verify that \eqref{Eq:PsiExpl} coincides with the expression for $\psi_{\zbar}$  \eqref{Eq:psiMQV} evaluated on generators $(\Xtt_a)_{ij}$ ($a\in \overline{Q}$) of $\CC[M_{\overline{Q}}^\gamma]$. Using  \eqref{Eq:psiMQV}, we compute  
\begin{equation}
\br{(\Xtt_a)_{ij},(\Xtt_b)_{kl}}= \left\{ 
\begin{array}{ll}
z_{ab}\,(\Xtt_a)_{ij} (\Xtt_b)_{kl}& \text{if }a,b\in Q \text{ with }a<b, \\
z_{a^\ast b^\ast}\,(\Xtt_a)_{ij} (\Xtt_b)_{kl}& \text{if }a^\ast,b^\ast\in Q \text{ with }a^\ast<b^\ast, \\
-z_{a^\ast b}\,(\Xtt_a)_{ij} (\Xtt_b)_{kl}& \text{if }a^\ast,b\in Q \text{ with }a^\ast<b, \\
- z_{ab^\ast}\,(\Xtt_a)_{ij} (\Xtt_b)_{kl}& \text{if }a,b^\ast\in Q \text{ with }a<b^\ast.
\end{array}
\right.
\end{equation}
By definition of the skewsymmetric matrix $(z_{ab})_{a,b\in \overline{Q}}$ using \eqref{Eq:zab-Ext}, this is equivalent to \eqref{Eq:PsiExpl}. 
\end{proof}

\begin{theorem} \label{Thm:MorRepAQ}
Fix $z_0\in \CC$. 
Then the morphism of algebras $\Xtt_{\tr}: \Sym(H_0(A_Q)) \to \CC[M_{\overline{Q}}^\gamma/\!/\Gl_{\nfat}]$ \eqref{Eq:MorRepAQ} 
is a morphism of Poisson algebras if $\Sym(H_0(A_Q))$ is equipped with $z_0 \brVdB{-,-}+\br{-,-}_{\zbar}$, cf. \eqref{Eq:dbr-VdB} and \eqref{Eq:dbr-Z}, while $\CC[M_{\overline{Q}}^\gamma/\!/\Gl_{\nfat}]$ is equipped with $z_0 P^\gamma+\psi_{\zbar}$, cf. \eqref{Eq:qP-MQV-bis} and  \eqref{Eq:psiMQV}. 
\end{theorem}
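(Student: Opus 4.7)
The plan is to deduce the theorem from its two immediate predecessors by exploiting $\CC$-linearity, and then to separately verify that both sides genuinely satisfy the Jacobi identity. By $\CC$-linearity of the map $\Xtt_{\tr}$, Propositions~\ref{Pr:MorRepAQ-1} and~\ref{Pr:MorRepAQ-2} together yield
\begin{equation*}
\Xtt_{\tr}\!\left(\{f,g\}_{z_0 \brVdB{-,-}+\br{-,-}_{\zbar}}\right) = \{\Xtt_{\tr}(f),\Xtt_{\tr}(g)\}_{z_0 P^{\gamma}+\psi_{\zbar}}
\end{equation*}
for all $f,g \in \Sym(H_0(A_Q))$. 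Thus $\Xtt_{\tr}$ already intertwines the two biderivations, and the substantive remaining content is that both sides are Poisson brackets.

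For the target, the bivector $z_0 P^{\gamma}+\psi_{\zbar}$ on $M_{\overline{Q}}^{\gamma}$ is Hamiltonian quasi-Poisson with moment map $\Phi^{\gamma}$, as an element of the quasi-Poisson pencil of Theorem~\ref{Thm:Pencil-MQV-bis}. Its reduction by $\Gl_{\nfat}$ descends to a Poisson bracket on $\CC[M_{\overline{Q}}^{\gamma}/\!/\Gl_{\nfat}]$, by the obvious $\gamma$-deformed version of Corollary~\ref{Cor:Pencil-redMQV}. For the source, each summand is individually Poisson (the Van den Bergh side via \eqref{Eq:dbr-VdB} and the noncommutative $H_0$-Poisson formalism of~\cite{VdB1}; the $\zbar$-side by the $(12)$-weak Poisson property of $\dgal{-,-}_r$ combined with \cite[Prop.~3.15]{FMC}). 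What must still vanish is the cross-Jacobiator
\begin{equation*}
J(f,g,h) := \sum_{\mathrm{cyc}} \bigl(\brVdB{f,\br{g,h}_{\zbar}} + \br{f,\brVdB{g,h}}_{\zbar}\bigr).
\end{equation*}

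The route I would take to verify $J \equiv 0$ is indirect: the intertwining property, combined with Jacobi for $z_0 P^{\gamma}+\psi_{\zbar}$ on every $\CC[M_{\overline{Q}}^{\gamma}(\nfat)/\!/\Gl_{\nfat}]$ (established at the previous paragraph for varying $\nfat\in \Z^I_{\geq 0}$), implies that $\Xtt^{\nfat}_{\tr}(J(f,g,h))$ vanishes for every dimension vector $\nfat$. A Procesi-type separation principle (which underpins Le Bruyn--Procesi, cf.~Remark~\ref{Rem:RepAQ}) — namely that $\Sym(H_0(A_Q))$ embeds into $\prod_{\nfat}\CC[M_{\overline{Q}}^{\gamma}(\nfat)/\!/\Gl_{\nfat}]$ via the aggregate of the $\Xtt^{\nfat}_{\tr}$ — then forces $J(f,g,h)=0$ universally. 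The main obstacle is precisely this separation statement, which has to be pinned down carefully in the $\gamma$-deformed, localised setting of $A_Q = \CC\overline{Q}_{S^{\gamma}}$. An alternative, more hands-on route would be to develop a mixed outer/right double-bracket compatibility condition and verify it directly on the generators $a,b,c\in \overline{Q}$ from the explicit formulas \eqref{Eq:dgal-VdB} and \eqref{Eq:dbrR-Q}; this is a finite but tedious computation, whose advantage would be to keep the proof entirely at the noncommutative level.
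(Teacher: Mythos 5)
Your reduction of the theorem to the vanishing of the cross-Jacobiator $J(f,g,h)$ is exactly right, and matches the paper's strategy: the intertwining is immediate from Propositions \ref{Pr:MorRepAQ-1} and \ref{Pr:MorRepAQ-2} by linearity, the target bracket is Poisson by the (reduced, $\gamma$-deformed) pencil result, each summand of the source bracket is Poisson separately, and the only substantive point is $J\equiv 0$. However, neither of your two routes for this key step is actually carried out, and the one you develop furthest has a genuine gap. The indirect route hinges on the claim that $\Sym(H_0(A_Q))$ embeds into $\prod_{\nfat}\CC[M_{\overline{Q}}^{\gamma}(\nfat)/\!/\Gl_{\nfat}]$ via the aggregate of the maps $\Xtt_{\tr}^{\nfat}$. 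This is not a consequence of Le Bruyn--Procesi, which gives surjectivity for each fixed $\nfat$, not injectivity of the product map; and for the \emph{localised} algebra $A_Q=\CC\overline{Q}_{S^{\gamma}}$ one would have to show that an element $f\in A_Q$ whose trace vanishes on every $M_{\overline{Q}}^{\gamma}(\nfat)$ lies in $[A_Q,A_Q]$ --- a separation statement you flag yourself as ``the main obstacle'' and do not prove. As it stands, $J\equiv 0$ is not established.

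The paper takes your ``alternative, hands-on route,'' but executes it at the level of $\Sym(H_0(A_Q))$ rather than at the level of a mixed double-bracket compatibility on generators. Concretely, it evaluates both brackets on trace monomials $f=\tr(a_1\cdots a_n)$, $g=\tr(b_1\cdots b_l)$, $h=\tr(c_1\cdots c_m)$: the special form $\dgal{a,b}_r=\epsilon(a)\epsilon(b)z_{ab}\,a\otimes b$ forces $\br{g,h}_{\zbar}=\mathtt{c}_{g,h}\,g\,h$ with $\mathtt{c}_{g,h}$ a scalar, which collapses $J=0$ to the single identity $\br{h,\brVdB{f,g}}_{\zbar}=(-\mathtt{c}_{g,h}+\mathtt{c}_{h,f})\,\brVdB{f,g}\,h$; this is then checked term by term against \eqref{Eq:dgal-VdB}, with the symmetry $z_{a^{\ast}b}=z_{ab}$ under the involution being exactly what saves the $\delta_{(b_j=a_i^{\ast})}$ term. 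Note also that your generator-level variant would require first proving that the mixed Jacobiator built from one outer and one right double bracket is a derivation in each argument for suitable bimodule structures --- a statement that is not available off the shelf and would itself need justification. Either complete the separation principle for $A_Q$ or carry out the direct computation; without one of these the proof is incomplete.
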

\begin{proof}
If we combine Propositions \ref{Pr:MorRepAQ-1} and \ref{Pr:MorRepAQ-2}, the only part of the statement which is unclear is that the linear combination $z_0 \brVdB{-,-}+\br{-,-}_{\zbar}$ is a Poisson bracket. Since we know that it is true if $z_0=0$ or $\zbar=0$, it suffices to show by rescaling that $\brVdB{-,-}+\br{-,-}_{\zbar}$ is a Poisson bracket on $\Sym(H_0(A_Q))$. 
Using again the known cases $z_0=0$ and $\zbar=0$, we are left to check that 
\begin{equation} \label{Eq:ThmRep1}
 \br{f, \brVdB{g,h}}_{\zbar} + \brVdB{f, \br{g,h}_{\zbar}} + \text{cyclic perm. } = 0\,,
\end{equation}
for arbitrary $f,g,h \in \Sym(H_0(A_Q))$. (Here, cyclic perm. means that we add the $4$ terms obtained by replacing $(f,g,h)$ by $(g,h,f)$ and $(h,f,g)$.) We shall check it for elements of the form 
\begin{align*}
 f=\tr(a_1 \cdots a_n), \quad g=\tr(b_1 \cdots b_l), \quad h=\tr(c_1 \cdots c_m), 
\end{align*}
with $a_i,b_{j},c_{k} \in \overline{Q}$ and $n,l,m\geq 1$, from which the result follows. 

Given an element $d\in A\otimes A$, we use Sweedler's notation and write $d=d'\otimes d''$ (this abuse of notation is not harmful since all the operations considered below are linear). 
To write $\br{g,h}_{\zbar}$, we start by computing using the derivation rules \eqref{Eq:dbrR-der1}--\eqref{Eq:dbrR-der2} and \eqref{Eq:dbrR-Q} 
\begin{align*}
\dgal{b_1 \cdots b_l,c_1 \cdots c_m}_r 
=&\sum_{j=1}^l \sum_{k=1}^m  b_1 \cdots b_{j-1} \dgal{b_j,c_k}_r' b_{j+1} \cdots b_{l} \otimes 
c_1 \cdots c_{k-1} \dgal{b_j,c_k}_r'' c_{k+1} \cdots c_{m} \\ 
=&\sum_{j=1}^l \sum_{k=1}^m \epsilon(b_j) \epsilon(c_k) z_{b_j,c_k} \, b_1 \cdots b_l \otimes c_1 \cdots c_m
\end{align*}
Therefore, we get after applying $\tr \otimes \tr$ to this expression that \eqref{Eq:dbr-Z} yields  
\begin{equation} \label{Eq:ThmRep-brVdB}
 \br{g,h}_{\zbar}= \mathtt{c}_{g,h}\, g\, h\,, \quad 
 \mathtt{c}_{g,h}:=\sum_{j=1}^l \sum_{k=1}^m \epsilon(b_j) \epsilon(c_k) z_{b_j,c_k}\,.
\end{equation}
Let us define similarly $\mathtt{c}_{f_1,f_2}$ for any $f_1,f_2 \in \{f,g,h\}$, 
noting that $\mathtt{c}_{f_1,f_2}=-\mathtt{c}_{f_2,f_1}$ by skewsymmetry of $(z_{ab})_{a,b\in \overline{Q}}$. 
We obtain by the derivation rule of the Poisson bracket $\brVdB{-,-}$ that 
\begin{equation*} 
 \brVdB{f, \br{g,h}_{\zbar}}= \mathtt{c}_{g,h}\left( \brVdB{f,g} h + g \brVdB{f,h} \right) \,,
\end{equation*}
from which we can write \eqref{Eq:ThmRep1} in the equivalent form 
\begin{equation} \label{Eq:ThmRep2}
 \br{h, \brVdB{f,g}}_{\zbar} + \mathtt{c}_{g,h}\brVdB{f,g} \, h + \mathtt{c}_{h,f}  \brVdB{g,f} \, h + \text{cyclic perm. } = 0\,.
\end{equation}
In particular, this identity is satisfied provided that we can show 
\begin{equation} \label{Eq:ThmRep3}
 \br{h, \brVdB{f,g}}_{\zbar} =(-\mathtt{c}_{g,h}+ \mathtt{c}_{h,f}) \brVdB{f,g} \, h \,,
\end{equation}
as the 3 displayed terms in \eqref{Eq:ThmRep2} vanish if \eqref{Eq:ThmRep3} holds, and therefore their cyclic permutations also vanish. 

To derive \eqref{Eq:ThmRep3}, we start by computing using the derivation rules \eqref{Eq:dbr-der1}--\eqref{Eq:dbr-der2},   
\begin{align*}
\dgal{a_1 \cdots a_n,b_1 \cdots b_l}_o 
=&\sum_{i=1}^n \sum_{j=1}^l  b_1 \cdots b_{j-1}  \dgal{a_i,b_j}_o' a_{i+1} \cdots a_{n} \otimes 
a_1 \cdots a_{i-1} \dgal{a_i,b_j}_o'' b_{j+1} \cdots b_{l}\,.
\end{align*}
Thus, we deduce from \eqref{Eq:dbr-VdB} that 
\begin{equation}
 \begin{aligned}  \label{Eq:ThmRep-br-fgF}
  \brVdB{f,g}=& \sum_{i=1}^n \sum_{j=1}^l  \, \mathcal{F}_{i,j}, \qquad \text{ where }\\
 \mathcal{F}_{i,j}:=& \tr(\dgal{a_i,b_j}_o' a_{i+1} \cdots a_{n} a_1 \cdots a_{i-1} \dgal{a_i,b_j}_o'' b_{j+1} \cdots b_{l} b_1 \cdots b_{j-1})\,.
 \end{aligned}
\end{equation}
Note that $\mathcal{F}_{i,j}$ is made of 7 terms, cf. \eqref{Eq:dgal-VdB}, which we label $\mathcal{F}_{i,j}^{(1)}, \ldots, \mathcal{F}_{i,j}^{(7)}$, respectively. (These terms could be zero.) Inside the trace for $\mathcal{F}_{i,j}^{(r)}$, $r\neq 1$, all the factors $a_{i'},b_{j'}$ appearing in $f$ and $g$ are present. Therefore, by an argument similar to the one that led to \eqref{Eq:ThmRep-brVdB}, we get  
\begin{equation}
 \br{h, \mathcal{F}_{i,j}^{(r)}}_{\zbar}= \tilde{c}\, h\, \mathcal{F}_{i,j}^{(r)} \,, \quad 
 \tilde{c}:=\sum_{k=1}^m \epsilon(c_k) \left(\sum_{i'=1}^n \epsilon(a_{i'}) z_{c_k,a_{i'}}+ \sum_{j'=1}^l \epsilon(b_{j'}) z_{c_k,b_{j'}} \right)\,,
\end{equation}
for $r=2,\ldots,7$. 
In particular, observe that $\tilde{c}=\mathtt{c}_{h,f}+\mathtt{c}_{h,g}=-\mathtt{c}_{g,h}+ \mathtt{c}_{h,f}$. For $r=1$, we get that the 2 factors $a_i,b_j$ are missing inside the trace for $\mathcal{F}_{i,j}^{(1)}$, hence   
\begin{equation}
 \br{h, \mathcal{F}_{i,j}^{(1)}}_{\zbar}= \hat{c}\, h\, \mathcal{F}_{i,j}^{(1)} \,, \quad 
 \hat{c}:=\sum_{k=1}^m \epsilon(c_k) \left(\sum_{\substack{i'=1\\i'\neq i}}^n \epsilon(a_{i'}) z_{c_k,a_{i'}}
 + \sum_{\substack{j'=1\\j'\neq j}}^l \epsilon(b_{j'}) z_{c_k,b_{j'}} \right)\,.
\end{equation} 
This time, we can write 
\begin{align*}
 \hat{c}&=\mathtt{c}_{h,f}-\sum_{k=1}^m \epsilon(c_k) \epsilon(a_{i}) z_{c_k,a_{i}}
+\mathtt{c}_{h,g} -\sum_{k=1}^m \epsilon(c_k) \epsilon(b_{j}) z_{c_k,b_{j}} \\
&=-\mathtt{c}_{g,h}+ \mathtt{c}_{h,f} - \sum_{k=1}^m \epsilon(c_k) (\epsilon(b_{j}) z_{c_k,b_j}+\epsilon(a_{i}) z_{c_k,a_{i}})\,.
\end{align*}
Note that $\mathcal{F}_{i,j}^{(1)}$ contains a factor $\delta_{(b_j=a_i^\ast)}$, cf. the first term of \eqref{Eq:dgal-VdB}. As we can write 
$$\delta_{(b_j=a_i^\ast)} \sum_{k=1}^m \epsilon(c_k) (\epsilon(b_{j}) z_{c_k,b_j}+\epsilon(a_{i}) z_{c_k,a_{i}}) 
= \delta_{(b_j=a_i^\ast)} \sum_{k=1}^m \epsilon(c_k) \epsilon(a_{i}) (- z_{c_k,a_i^\ast}+ z_{c_k,a_{i}}) \,,$$
this expression vanishes since $(z_{ab})$ is symmetric in each index under the involution $a\mapsto a^\ast$ by \eqref{Eq:zab-Ext}. 
Therefore, $\hat{c}\, \mathcal{F}_{i,j}^{(1)} = (-\mathtt{c}_{g,h}+ \mathtt{c}_{h,f}) \mathcal{F}_{i,j}^{(1)}$, so that 
$\br{h, \mathcal{F}_{i,j}^{(r)}}_{\zbar}= (-\mathtt{c}_{g,h}+ \mathtt{c}_{h,f})\, h\, \mathcal{F}_{i,j}^{(r)}$ for each $1\leq r\leq 7$. 
In particular, $\br{h, \mathcal{F}_{i,j}}_{\zbar}= (-\mathtt{c}_{g,h}+ \mathtt{c}_{h,f})\, h\, \mathcal{F}_{i,j}$. Combining these observations with \eqref{Eq:ThmRep-br-fgF}, we find 
\begin{equation*}
\br{h,\brVdB{f,g}}_{\zbar}  
= (-\mathtt{c}_{g,h}+ \mathtt{c}_{h,f}) \sum_{i=1}^n \sum_{j=1}^l \, h\,  \mathcal{F}_{i,j} 
= (-\mathtt{c}_{g,h}+ \mathtt{c}_{h,f}) \, h\, \brVdB{f,g}\,,
\end{equation*}
which is precisely \eqref{Eq:ThmRep3}. 
\end{proof}

As a consequence of Theorem \ref{Thm:MorRepAQ}, we can repeat Remark \ref{Rem:RepAQ} and see the family of Poisson brackets $(z_0\brVdB{-,-}+\br{-,-}_{\zbar})$ on $\Sym(H_0(A_Q))$ as the universal Poisson pencil that induces the one on the varieties $M_{\overline{Q}}^\gamma/\!/\Gl_{\nfat}$ defined by taking invariants in Theorem \ref{Thm:Pencil-MQV-bis}. 
In fact, any computation can be performed using the operations $(\dgal{-,-}_o,\dgal{-,-}_r)$ on $A_Q$.

\begin{remark}
An analogue of Theorem \ref{Thm:MorRepAQ} in the case of character varieties (cf.~\ref{ss:CharVar}) can be obtained by replacing Van den Bergh's outer double (quasi-Poisson) bracket on $A_Q$ by the one introduced by Massuyeau-Turaev in \cite{MT14} on 
 $\CC\langle a_1^{\pm1},(a_1^\ast)^{\pm1}, \ldots,a_g^{\pm1},(a_g^\ast)^{\pm1},z_1^{\pm1},\ldots,z_r^{\pm1}\rangle$. 
The case of quiver varieties (cf.~\ref{ss:QuiVar}) is obtained similarly by using Van den Bergh's outer double Poisson bracket on $\CC \overline{Q}$ given in \cite[\S6.3-6.4]{VdB1}. 
Alternatively note that the outer double bracket on $\CC \overline{Q}$ in the Poisson case reduces to the first constant term appearing in \eqref{Eq:dgal-VdB}, thus we can formally derive this case from Theorem \ref{Thm:MorRepAQ} by setting $\gamma_a=\tilde{\gamma}\in \CC^\times$ for each $a\in \overline{Q}$ and considering the family $(z_0\tilde{\gamma}^{-1}\brVdB{-,-}+\br{-,-}_{\zbar})$ in the limit $\tilde{\gamma} \to \infty$.
For generalized MQV (cf.~\ref{ss:GenMQV}), it is conjectured in \cite{FFern} that their Poisson structure is induced by an outer double bracket;
as this problem is still open, exhibiting a universal Poisson pencil is currently beyond reach.
\end{remark}


\section{Application to the spin RS phase space}  \label{S:ApplRS}

In \cite{CF}, the authors constructed the phase space of the spin Ruijsenaars-Schneider (RS) system of Krichever and Zabrodin \cite{KrZ} as a multiplicative quiver variety. We revisit this result with the pencil constructed in Theorem \ref{Thm:Pencil-MQV} in order to bridge the gap with the alternative construction of the phase space from \cite{AO} where a \emph{different} Poisson structure is derived. 

\subsection{General construction}  \label{S:sRS-Gen}

We recall the setting of \cite{CF}. 
Fix $d\geq 2$. 
Consider the quiver $Q_d$ consisting of the vertex set $I=\{0,\infty\}$ and the arrows 
$x:0\to 0$, $v_\alpha:\infty \to 0$ for $1\leq \alpha \leq d$. 
On the double $\overline{Q}_d$, we take the ordering $x<x^\ast<v_1<v_1^\ast <\ldots <v_d < v_d^\ast$. 

Let $\gamma:=(\gamma_x,\gamma_\alpha)\in \CC^{|Q_d|}$ be given by $\gamma_x=0$, $\gamma_\alpha=1$ for $1\leq \alpha \leq d$.
For $n\geq1$ and $\nfat=(n_0,n_\infty)=(n,1)$, 
we form the variety $M_{\overline{Q}_d}^{\gamma}$ as in \ref{ss:DefoMomap}. 
Explicitly, if $X,Z,V_\alpha,W_\alpha$ denote, respectively, the nonzero blocks of the matrices $\Xtt_x,\Xtt_{x^\ast},\Xtt_{v_\alpha},\Xtt_{v_\alpha^\ast}$ (where $1\leq \alpha \leq d$), we have 
\begin{equation}
 \begin{aligned} \label{Eq:Param-VW}
M_{\overline{Q}_d}^{\gamma} :=& \{(X,Z,V_\alpha,W_\alpha) \mid \det(X),\det(Z),\det(\Id_n+W_\alpha V_\alpha)\neq 0,\,\, 1\leq \alpha \leq d\} \\
\subset & \{X,Z\in \Gl_n(\CC),\,\, V_1,\ldots,V_d\in \Mat_{1\times n}(\CC),\,\, W_1,\ldots,W_d \in \Mat_{n\times 1}(\CC)\, \} . 
 \end{aligned}
\end{equation}
The action of the group $\Gl_n(\CC)\times \CC^\times$ on the variety is given by 
\begin{equation} \label{Eq:Act-RS}
 (g,\lambda)\cdot (X,Z,V_\alpha,W_\alpha) = (gXg^{-1},gZg^{-1},\lambda V_\alpha g^{-1},g W_\alpha \lambda^{-1})\,.
\end{equation}
Since the group $\{(\lambda \Id_n,\lambda) \mid \lambda \in \CC^\times\}$ acts trivially, we shall only focus on\footnote{As we omit the action of an abelian group, this does not change the description of the quasi-Poisson structure given subsequently.} the action of $\Gl_n(\CC)$.  
By Theorem \ref{Thm:qPqHam-MQV-bis}, $M_{\overline{Q}_d}^{\gamma}$ is a smooth complex variety of dimension $2n^2+2nd$ 
with the non-degenerate quasi-Poisson bivector $P$ \eqref{Eq:qP-MQV-bis} and the corresponding moment map 
\begin{equation} \label{Eq:Momap-RS}
 \Phi:M_{\overline{Q}_d}^{\gamma}\to \Gl_n,\quad
\Phi(X,Z,V_\alpha,W_\alpha):= XZX^{-1}Z^{-1}\, \prod_{1\leq \alpha \leq d} (\Id+W_\alpha V_\alpha)^{-1}\,,
\end{equation}
where we write factors in the product from left to right with increasing indices $1\leq \alpha\leq d$. 
The quasi-Poisson bracket corresponding to $P$ is explicitly spelled out in \cite[Eq.~(2.8)]{CF}. 
(We shall not use the corresponding $2$-form given in \eqref{Eq:omega-MQV-bis}.)

A useful parametrization of $M_{\overline{Q}_d}^{\gamma}$ is given as follows. 
For $1\leq \alpha \leq d$, introduce 
\begin{equation} \label{Eq:Def-AB}
A_\alpha := W_\alpha\,, \quad 
B_\alpha := V_\alpha (\Id_n+W_{\alpha-1} V_{\alpha-1}) \cdots (\Id_n+W_1V_1)Z\,,
\end{equation}
which are easily seen to satisfy 
\begin{equation} \label{Eq:Rel-VWAB}
 (\Id_n+W_{\alpha} V_{\alpha}) \cdots (\Id_n+W_1V_1)Z = Z+ A_1B_1 + \cdots + A_\alpha B_\alpha\,.
\end{equation}
In particular, the right-hand side $\mathcal{Z}_\alpha:=Z+ A_1B_1 + \cdots + A_\alpha B_\alpha$ is invertible. Thus, 
\begin{equation}
 \begin{aligned} \label{Eq:Param-AB}
M_{\overline{Q}_d}^{\gamma} =& \{(X,Z,A_\alpha,B_\alpha) \mid \det(X),\det(Z),\det(\mathcal{Z}_\alpha)\neq 0,\,\, 1\leq \alpha \leq d\} \\
\subset & \{X,Z\in \Gl_n(\CC),\,\, A_1,\ldots,A_d \in \Mat_{n\times 1}(\CC),\,\,B_1,\ldots,B_d\in \Mat_{1\times n}(\CC) \, \} . 
 \end{aligned}
\end{equation}
With that parametrization, the action of $\Gl_n(\CC)$ is obtained from \eqref{Eq:Act-RS} by substituting $A_\alpha,B_\alpha$ for $W_\alpha,V_\alpha$, respectively, while the moment map reads 
\begin{equation} \label{Eq:Momap-RS-2}
 \Phi:M_{\overline{Q}_d}^{\gamma}\to \Gl_n,\quad
\Phi(X,Z,A_\alpha,B_\alpha):= XZX^{-1}\, (Z+ A_1B_1 + \cdots + A_d B_d)^{-1}\,.
\end{equation}
The quasi-Poisson bracket $P$ can be written in terms of those elements by combining Lemma~3.3 and (3.15) from \cite{CF}. 

\medskip

To perform reduction, choose $q\in \CC^\times$ such that $q^k\neq 1$ for $1\leq k\leq n$. 
Then the action of $\Gl_n(\CC)$ that factors through \eqref{Eq:Act-RS} is free\footnote{This requires the condition on $q$. If $q^k=1$, 
then $\Phi^{-1}(q\Id_n)$ contains the points $(X_{(k)},Z_{(k)},0,0)$ for any $k$-dimensional representation $x\mapsto X_{(k)}$, $z\mapsto Z_{(k)}$, of the quantum torus $\CC\langle x^{\pm1},z^{\pm1}\rangle/(xz-qzx)$, which have stabilizers of dimension $\geq1$ as, e.g., $\lambda \Id_n$ acts trivially. 
This condition is not discussed in \cite{AO}, so that their description works only on the smooth locus of the reduced space \eqref{Eq:Cndq} when $q$ is a $k$-th root of unity for some $1\leq k \leq n$.} on the subvariety $\Phi^{-1}(q\Id_n)$, which is denoted $\mathcal{M}_{n,d,q}^\times$ in \cite{CF}.
The affine GIT quotient $\Phi^{-1}(q\Id_n)/\!/\Gl_n(\CC)$ (in fact geometric quotient) corresponding to taking trivial stability parameter $\theta$ in Definition \ref{Def:MQV} is therefore a smooth complex variety of dimension $2nd$, cf. \cite{CF}. 
We set 
\begin{equation} \label{Eq:Cndq}
 \Ccal_{n,d,q}^\times := \Phi^{-1}(q\Id_n)/\!/\Gl_n(\CC)\,, 
\end{equation}
which we call the \emph{spin Ruijsenaars-Schneider (RS) phase space}. 
We get a non-degenerate Poisson bracket on $\Ccal_{n,d,q}^\times$ defined by quasi-Poisson reduction from the bivector $P$.

\subsubsection{The pencil} \label{ss:RS-pencil}

For $\zbar=(z_{\alpha\beta})\in \CC^{d(d-1)/2}$, we form the bivector $\psi_{\zbar}$ on $M_{\overline{Q}_d}^{\gamma}$ using \eqref{Eq:psiMQV} where we set $z_{v_\alpha,v_\beta}=z_{\alpha\beta}$ and $z_{x,v_\alpha}=0$ for each $1\leq \alpha <\beta \leq d$. 
For convenience, we extend $\zbar$ to an antisymmetric $d\times d$ matrix with entries $(z_{\alpha\beta})_{1\leq \alpha,\beta \leq d}$ by setting $z_{\alpha \beta}=-z_{\beta \alpha}$ if $\alpha>\beta$ and $z_{\alpha\alpha}=0$.  
The corresponding bracket, also denoted $\psi_{\zbar}$ by abusing notation, is explicitly given by 
\begin{subequations} \label{Eq:Psi-RS}
 \begin{align}  
\psi_{\zbar}(X_{ij},-)&=0\,, \qquad \psi_{\zbar}(Z_{ij},-)=0\,, \label{Eq:Psi-RSa} \\
\psi_{\zbar}((V_\alpha)_i,(V_\beta)_j)&=z_{\alpha \beta} \, (V_\alpha)_i (V_\beta)_j\,, \quad 
\psi_{\zbar}((W_\alpha)_i,(W_\beta)_j)=z_{\alpha \beta} \, (W_\alpha)_i (W_\beta)_j\,, \label{Eq:Psi-RSb} \\ 
\psi_{\zbar}((V_\alpha)_i,(W_\beta)_j)&=-z_{\alpha \beta} \, (V_\alpha)_i (W_\beta)_j\,, \quad
\psi_{\zbar}((W_\alpha)_i,(V_\beta)_j)=-z_{\alpha \beta} \, (W_\alpha)_i (V_\beta)_j\,, \label{Eq:Psi-RSc}
 \end{align}
\end{subequations}
where $X_{ij},Z_{ij},(V_\alpha)_i,(W_\alpha)_i$ are the ``matrix entry'' functions defined in the obvious way.  
In terms of the alternative parametrization that uses \eqref{Eq:Def-AB}, we still have \eqref{Eq:Psi-RSa}  as well as 
\begin{subequations} 
 \begin{align}   
\psi_{\zbar}((A_\alpha)_i,(A_\beta)_j)&=z_{\alpha \beta} \, (A_\alpha)_i (A_\beta)_j\,, \quad 
\psi_{\zbar}((B_\alpha)_i,(B_\beta)_j)=z_{\alpha \beta} \, (B_\alpha)_i (B_\beta)_j\,, \label{Eq:Psi-RSg} \\ 
\psi_{\zbar}((A_\alpha)_i,(B_\beta)_j)&=-z_{\alpha \beta} \, (A_\alpha)_i (B_\beta)_j\,, \quad
\psi_{\zbar}((B_\alpha)_i,(A_\beta)_j)=-z_{\alpha \beta} \, (B_\alpha)_i (A_\beta)_j\,.   \label{Eq:Psi-RSh}
 \end{align}
\end{subequations}
Due to Theorem \ref{Thm:Pencil-MQV-bis}, the bivector $P+\psi_{\zbar}$ is always non-degenerate quasi-Poisson on $M_{\overline{Q}_d}^{\gamma}$ for the moment map \eqref{Eq:Momap-RS} (or equivalently \eqref{Eq:Momap-RS-2}). 
In particular, this yields a quasi-Poisson pencil centered at $P$ of order $d(d-1)/2$. 
After performing reduction, we obtain a Poisson pencil on the spin RS phase space $\Ccal_{n,d,q}^\times$ \eqref{Eq:Cndq}. 

\subsection{Local parametrization}  \label{S:sRS-loc}

We continue with the notations of the previous subsection. 
Following \cite[\S4.1]{CF}, consider the affine space $(\CC^\times)^n\times \CC^{2nd}$ with coordinates $(x_i,a_i^\alpha,b_i^\alpha)_{1\leq i\leq n}^{1\leq \alpha \leq d}$, and define the following subspace of dimension $2nd$: 
\begin{equation} \label{Eq:hreg}
 \h_\reg = \Big\{(x_i,a_i^\alpha,b_i^\alpha) \mid x_i\neq x_j,\, x_i\neq q x_j,\, \forall i\neq j;\quad 
 \sum_{1\leq \alpha \leq d} a_i^\alpha=1, \, \forall i; \quad \det(\mathcal{Z}_\alpha)\neq 0,\, \forall \alpha \Big\}\,,
\end{equation}
where for $\alpha\in \{0\}\cup \{1,\ldots,d\}$, we introduce the following matrix: 
\begin{equation*}
 \mathcal{Z}_\alpha\in \Mat_{n\times n}(\CC),\quad (\mathcal{Z}_\alpha)_{ij}:= q\frac{\sum_{\beta=1}^d a_i^\beta b_j^\beta}{x_ix_j^{-1}-q} + \sum_{\beta=1}^\alpha a_i^\beta b_j^\beta\,, \quad 1\leq i,j\leq n\,.
\end{equation*}
(The second sum is omitted when $\alpha=0$.) There is an action of the symmetric group $S_n$ on $\h_\reg$ by simultaneous permutation of the indices:  $\sigma \cdot (x_i,a_i^\alpha,b_i^\alpha) = (x_{\sigma(i)},a_{\sigma(i)}^\alpha,b_{\sigma(i)}^\alpha)$. 

\begin{proposition}[\cite{CF}]  \label{Prop:CF}
The morphism of affine varieties $\xi:\h_\reg/S_n \to \Ccal_{n,d,q}^\times$ obtained by mapping 
$(x_i,a_i^\alpha,b_i^\alpha)$ to the orbit of the point $(X,Z,A_\alpha,B_\alpha)$ given by 
\begin{equation} \label{Eq:xi-local}
 X_{ij}=\delta_{ij} x_i,\quad Z_{ij}=q\frac{\sum_{\beta=1}^d a_i^\beta b_j^\beta}{x_ix_j^{-1}-q}, \quad 
 (A_\alpha)_i=a_i^\alpha,\quad (B_\alpha)_i=b_i^\alpha\,,
\end{equation}
is well-defined and injective. Furthermore, $\xi$ is a morphism of Poisson varieties if $\Ccal_{n,d,q}^\times$ is endowed with the Poisson bracket defined by the Poisson bivector $P$ \eqref{Eq:qP-MQV-bis} and $\h_\reg/S_n$ is endowed with the Poisson bracket $\br{-,-}_0$ uniquely determined by 
 \begin{subequations} \label{Eqh}
 \begin{align}
  \br{x_i,x_j}_0=&0\,,\quad \br{x_i,  a_j^\alpha}_0=0\,,\quad 
\br{x_i,  b_j^\alpha}_0=\delta_{ij} x_i  b_j^\alpha \,, \label{Eqh1} \\
\br{ a_i^\alpha,  a_j^\beta}_0=&\frac12 \delta_{(i\neq j)}\frac{x_i+x_j}{x_i-x_j}
( a_i^\alpha  a_j^\beta + a_j^\alpha  a_i^\beta - a_j^\alpha  a_j^\beta - 
 a_i^\alpha  a_i^\beta ) +\frac12 o(\beta, \alpha)
( a_i^\alpha a_j^\beta + a_j^\alpha a_i^\beta ) \nonumber \\
&+\frac12 \sum_{\gamma=1}^d o(\alpha,\gamma) a_j^\beta 
( a_i^\alpha a_j^\gamma + a_j^\alpha a_i^\gamma )
-\frac12 \sum_{\gamma=1}^d o(\beta,\gamma) a_i^\alpha 
( a_j^\beta  a_i^\gamma+ a_i^\beta  a_j^\gamma)
\,, \label{Eqh2} \\
\br{ a_i^\alpha,  b_j^\beta}_0=& a_i^\alpha Z_{ij}-\delta_{\alpha\beta}Z_{ij}-
\frac12 \delta_{(i\neq j)}\frac{x_i+x_j}{x_i-x_j} ( a_i^\alpha- a_j^\alpha) b_j^\beta
+\delta_{(\alpha<\beta)} a_i^\alpha  b_j^\beta \nonumber \\
&+ a_i^\alpha \sum_{\gamma=1}^{\beta-1} a_i^\gamma ( b_j^\gamma- b_j^\beta) 
-\delta_{\alpha\beta} \sum_{\gamma=1}^{\beta-1}  a_i^\gamma  b_j^\gamma 
-\frac12 \sum_{\gamma=1}^d o(\alpha,\gamma) b_j^\beta 
( a_i^\alpha a_j^\gamma + a_j^\alpha a_i^\gamma ) \,, \label{Eqh3} \\
\br{ b_i^\alpha,  b_j^\beta}_0=&
\frac12 \delta_{(i\neq j)}\frac{x_i+x_j}{x_i-x_j} ( b_i^\alpha b_j^\beta  +  b_j^\alpha b_i^\beta) 
- b_i^\alpha Z_{ij} +  b_j^\beta Z_{ji} +\frac12 o(\beta,\alpha)
( b_i^\alpha b_j^\beta- b_j^\alpha  b_i^\beta) \nonumber \\
&- b_i^\alpha \sum_{\gamma=1}^{\beta-1} a_i^\gamma ( b_j^\gamma- b_j^\beta)
+ b_j^\beta \sum_{\gamma=1}^{\alpha-1} a_j^\gamma ( b_i^\gamma- b_i^\alpha) \,.\label{Eqh4}
 \end{align}
\end{subequations}
In those formulae, $o(-,-):\{1,\ldots,d\}^{\times 2}\to \{0,\pm 1\}$ is the skew-symmetric function such that $o(\alpha,\beta)=+1$ if $\alpha<\beta$, and $Z_{ij}$ is defined in \eqref{Eq:xi-local}. 
This induces an isomorphism of Poisson varieties $\xi:\h_\reg/S_n \to \Ccal^\circ$, where $\Ccal^\circ \subset \Ccal_{n,d,q}^\times$ is the image of $\xi$.
\end{proposition}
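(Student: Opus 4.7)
The plan is to establish three claims in sequence: (i) the assignment $\xi$ is a well-defined $S_n$-invariant morphism landing in $\Ccal_{n,d,q}^\times$; (ii) it descends to an injection $\h_\reg/S_n \hookrightarrow \Ccal_{n,d,q}^\times$ whose image is a smooth open subvariety $\Ccal^\circ$; and (iii) it intertwines the Poisson brackets. I expect (iii) to be the main obstacle, with (i) and (ii) reducing to direct verifications.

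For (i), on $\h_\reg$ the matrix $X = \diag(x_i)$ is invertible (since $x_i \in \CC^\times$), the denominators $x_ix_j^{-1}-q$ are nonzero (by the non-coincidence condition), and the invertibility of all $\mathcal{Z}_\alpha$ is built into \eqref{Eq:hreg}; thus $\xi$ lands in $M_{\overline{Q}_d}^{\gamma}$. The moment-map condition $\Phi(\xi(\cdot)) = q\Id_n$ reduces to a one-line computation: since $X$ is diagonal, $(XZX^{-1})_{ij} = x_ix_j^{-1} Z_{ij}$, whence
\begin{equation*}
(XZX^{-1} - qZ)_{ij} = (x_ix_j^{-1} - q) Z_{ij} = q \sum_{\beta=1}^d a_i^\beta b_j^\beta = q\Big(\textstyle\sum_\alpha A_\alpha B_\alpha\Big)_{ij},
\end{equation*}
which rearranges to $XZX^{-1} = q(Z + \sum_\alpha A_\alpha B_\alpha)$, i.e.\ $\Phi = q\Id_n$ via \eqref{Eq:Momap-RS-2}. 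Invariance under $S_n$ is immediate, since a permutation $\sigma$ on the source corresponds to simultaneous conjugation by the associated permutation matrix in $\Gl_n(\CC)$, hence preserves the orbit.

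For (ii), injectivity is obtained as follows. If two points of $\h_\reg$ give representatives related by some $g \in \Gl_n(\CC)$, then $g$ must conjugate a diagonal matrix with distinct entries to another, so $g$ lies in the normalizer $N_{\Gl_n}(T) \cong T \rtimes S_n$ of the diagonal torus $T$. The $T$-part acts by $(a_i^\alpha, b_i^\alpha) \mapsto (\lambda_i a_i^\alpha, \lambda_i^{-1} b_i^\alpha)$, and the normalization $\sum_\alpha a_i^\alpha = 1$ forces each $\lambda_i = 1$; this leaves only the $S_n$-ambiguity, which is quotiented out on the source. The image $\Ccal^\circ$ is open because it is cut out by the regularity conditions $x_i \neq x_j$, $x_i \neq qx_j$ on the eigenvalues of $X$ (plus the non-vanishing sums ensuring the normalization can be achieved); smoothness follows from the freeness of the $\Gl_n(\CC)$-action on $\Phi^{-1}(q\Id_n)$, which uses the hypothesis $q^k \neq 1$ for $1 \leq k \leq n$.

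The Poisson compatibility (iii) is the hard part. The quasi-Poisson bivector $P$ on $M_{\overline{Q}_d}^{\gamma}$, spelled out explicitly in \cite[Lem.~3.3, (3.15)]{CF}, restricts to a genuine Poisson bracket on the ring $\CC[\Phi^{-1}(q\Id_n)]^{\Gl_n}$ of reduced invariants, yielding the bracket on $\Ccal_{n,d,q}^\times$. Pulling back through $\xi$, one must verify $\xi^\ast\br{f,g} = \br{\xi^\ast f, \xi^\ast g}_0$ on the generating functions $x_i, a_i^\alpha, b_i^\alpha$ and check that \eqref{Eqh} emerges. A useful consistency test is that $\sum_\alpha a_i^\alpha - 1$ must be central with respect to $\br{-,-}_0$, which one verifies directly from \eqref{Eqh2}--\eqref{Eqh3}. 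The appearance of $o(\alpha,\beta)$ in \eqref{Eqh2}--\eqref{Eqh4} is traceable to the total ordering $v_1 < v_1^\ast < \cdots < v_d < v_d^\ast$ entering the second sum of \eqref{Eq:qP-MQV-bis}, while the $(x_i+x_j)/(x_i-x_j)$ factors come from the standard Cayley transform governing the reduction. The computation is long but algorithmic: once all relevant brackets of the $X, Z, A_\alpha, B_\alpha$ entries are pulled back, collecting terms by type yields \eqref{Eqh}. The obstacle here is organizational rather than conceptual.
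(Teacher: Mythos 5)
Your parts (i) and (ii) are correct and essentially the standard argument: the moment-map identity $XZX^{-1}=q(Z+\sum_\alpha A_\alpha B_\alpha)$ follows from the one-line computation you give, and injectivity via the normalizer of the diagonal torus together with the normalization $\sum_\alpha a_i^\alpha=1$ is exactly how this is handled in \cite{CF}. (Note that the present paper does not reprove the proposition; it cites \cite{CF} and only records, right after the statement, the mechanism by which the Poisson compatibility is checked.)

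The gap is in part (iii), and it is not merely organizational. The reduced Poisson bracket on $\Ccal_{n,d,q}^\times$ is defined only on $\Gl_n$-invariant functions: the quasi-Poisson bracket of two invariants descends, but the ``brackets of the $X,Z,A_\alpha,B_\alpha$ entries'' do not, since those entries are not invariant and have no well-defined bracket on the quotient. Likewise $x_i,a_i^\alpha,b_i^\alpha$ are coordinates on a slice upstairs, not functions on $\Ccal^\circ$; if one insists on working with them one must include the correction terms coming from projecting the quasi-Poisson flows back onto the gauge slice ($X$ diagonal, $\sum_\alpha a_i^\alpha=1$), and these corrections are precisely where several of the terms in \eqref{Eqh} originate. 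The verification therefore has to be run on the invariant generators $f_k=\tr(X^k)$ and $g_{\alpha\beta;k}=\tr(A_\alpha B_\beta X^k)$ of \eqref{Eq:RS-functLoc}, whose pullbacks \eqref{Eq:xifg} give local coordinates on $\h_\reg/S_n$: one computes their quasi-Poisson brackets upstairs (where the bracket of invariants is unambiguous) and matches the result with what $\br{-,-}_0$ returns on the symmetric combinations $\sum_i x_i^k$, $\sum_i a_i^\alpha b_i^\beta x_i^k$. As written, your proposal neither sets up this invariant-function comparison correctly nor carries out any of the computation that actually produces the formulas \eqref{Eqh1}--\eqref{Eqh4}, which are the entire content of the statement; asserting that ``collecting terms by type yields \eqref{Eqh}'' leaves the proposition unproved.
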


As the Poisson structure is non-degenerate, $\xi$ is an isomorphism of symplectic varieties. 
To establish that the map $\xi$ intertwines the two Poisson structures, it suffices to check the result on the functions
\begin{equation} \label{Eq:RS-functLoc}
 f_k:=\tr(X^k),\quad g_{\alpha\beta;k}:=\tr(A_\alpha B_\beta X^k), \quad k\geq0,\,\, 1\leq \alpha,\beta\leq d,
\end{equation}
as their pullbacks provide a local coordinate system in view of
\begin{equation} \label{Eq:xifg}
  \xi^\ast f_k:= \sum_i x_i^k\,, \quad
  \xi^\ast g_{\alpha \beta;k}=\sum_{i}a_i^\alpha b_i^\beta x_i^k\,, \quad
  \sum_{1\leq \alpha \leq d} \xi^\ast g_{\alpha \beta;k}=\sum_{i} b_i^\beta x_i^k\,.
\end{equation}

\subsubsection{Local expression of $\psi_{\zbar}$}
The argument leading to Proposition \ref{Prop:CF} can be adapted to the Poisson bivector $\psi_{\zbar}$ given in \ref{ss:RS-pencil} as follows.

\begin{lemma} \label{Lem:XiPsi}
 The isomorphism of varieties $\xi:\h_\reg/S_n \to \Ccal^\circ$ given by \eqref{Eq:xi-local} in Proposition \ref{Prop:CF} is an isomorphism of Poisson varieties if $\Ccal^\circ$ is endowed with the Poisson bracket defined by $\psi_{\zbar}$ and
 $\h_\reg/S_n$ is endowed with the Poisson bracket, denoted $\psi_{\zbar}^\loc$, uniquely determined by
 \begin{subequations} \label{Eq:psi}
 \begin{align}
\psi_{\zbar}^\loc(x_i,x_j)&=0\,,\quad \psi_{\zbar}^\loc(x_i,  a_j^\alpha)=0\,,\quad
\psi_{\zbar}^\loc(x_i,  b_j^\alpha )=0 \,, \label{Eq:psi1} \\
\psi_{\zbar}^\loc( a_i^\alpha,  a_j^\beta )&=  G_{\zbar}(i,j;\alpha,\beta) \, a_i^\alpha  a_j^\beta
\,, \label{Eq:psi2} \\
\psi_{\zbar}^\loc (a_i^\alpha,  b_j^\beta )&= -  G_{\zbar}(i,j;\alpha,\beta) \, a_i^\alpha  b_j^\beta
\,, \label{Eq:psi3} \\
\psi_{\zbar}^\loc (b_i^\alpha,  b_j^\beta)&=  G_{\zbar}(i,j;\alpha,\beta) \, b_i^\alpha  b_j^\beta
 \,. \label{Eq:psi4}
 \end{align}
\end{subequations}
Here, we have set for any $1\leq i,j\leq n$ and $1\leq \alpha,\beta \leq d$,
\begin{equation} \label{Eq:G-fct}
 G_{\zbar}(i,j;\alpha,\beta):= z_{\alpha \beta} + \sum_{1\leq \mu \leq d} (z_{\mu \alpha} a_j^\mu - z_{\mu \beta} a_i^\mu) + \sum_{1\leq \mu,\nu\leq d} z_{\mu \nu} a_i^\mu a_j^\nu\,,
\end{equation}
which is antisymmetric under simultaneously swapping the pairs $(i,\alpha)$ and $(j,\beta)$.
\end{lemma}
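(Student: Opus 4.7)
The plan is to exploit the fact that Proposition~\ref{Prop:CF} already gives $\xi$ as an isomorphism of affine varieties onto $\Ccal^\circ$. Since $\psi_{\zbar}$ on $M_{\overline{Q}_d}^\gamma$ is Poisson and satisfies $\psi_{\zbar}^\sharp(\dd\Phi^\ast F)=0$ for all $F$ on $\Gl_n$ by Lemma~\ref{Lem:AltPenc}, it descends to $\Ccal_{n,d,q}^\times$ and then transports through $\xi^\ast$ to a unique Poisson bracket on $\h_{\reg}/S_n$. It therefore suffices to match this pullback with $\psi_{\zbar}^\loc$ on the $\Gl_n$-invariant generating set $\{f_k, g_{\alpha\beta;k}\}$ from \eqref{Eq:RS-functLoc}, whose $\xi^\ast$-images are recalled in \eqref{Eq:xifg} and provide local coordinates on $\h_{\reg}/S_n$; well-definedness, $S_n$-equivariance, and Jacobi for $\psi_{\zbar}^\loc$ then follow automatically.

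The brackets involving $f_k$ are trivial: $\psi_{\zbar}(f_k,-)=0$ by \eqref{Eq:Psi-RSa}, and $\psi_{\zbar}^\loc(\xi^\ast f_k,-)=0$ since $\xi^\ast f_k=\sum_i x_i^k$ and $\psi_{\zbar}^\loc(x_i,-)=0$ by \eqref{Eq:psi1}. For the main bracket, I expand $g_{\alpha\beta;k}=\sum_{i,j}(A_\alpha)_i(B_\beta)_j(X^k)_{ji}$ and apply the Leibniz rule with \eqref{Eq:Psi-RSg}--\eqref{Eq:Psi-RSh}: the four cross brackets between $\{(A_\alpha)_i,(B_\beta)_j\}$ and $\{(A_\mu)_p,(B_\nu)_q\}$ all return the same monomial $(A_\alpha)_i(B_\beta)_j(A_\mu)_p(B_\nu)_q$, with respective coefficients $z_{\alpha\mu}$, $-z_{\alpha\nu}$, $-z_{\beta\mu}$, $z_{\beta\nu}$. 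Summing against $(X^k)_{ji}(X^l)_{qp}$ factorises the expression and gives the clean identity
\begin{equation*}
\psi_{\zbar}(g_{\alpha\beta;k},g_{\mu\nu;l})=(z_{\alpha\mu}-z_{\alpha\nu}-z_{\beta\mu}+z_{\beta\nu})\,g_{\alpha\beta;k}\,g_{\mu\nu;l}.
\end{equation*}

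On the local side, writing $\xi^\ast g_{\alpha\beta;k}=\sum_i x_i^k a_i^\alpha b_i^\beta$ and expanding via Leibniz using \eqref{Eq:psi2}--\eqref{Eq:psi4} (the $x_i$ factors are inert by \eqref{Eq:psi1}), the antisymmetry $G_{\zbar}(p,i;\mu,\beta)=-G_{\zbar}(i,p;\beta,\mu)$ lets me align the four pieces into $\sum_{i,p} x_i^k x_p^l a_i^\alpha b_i^\beta a_p^\mu b_p^\nu\, H_{i,p}$, with
\begin{equation*}
H_{i,p}:=G_{\zbar}(i,p;\alpha,\mu)-G_{\zbar}(i,p;\alpha,\nu)-G_{\zbar}(i,p;\beta,\mu)+G_{\zbar}(i,p;\beta,\nu).
\end{equation*}
The crux is then the pointwise polynomial identity $H_{i,p}=z_{\alpha\mu}-z_{\alpha\nu}-z_{\beta\mu}+z_{\beta\nu}$: unfolding \eqref{Eq:G-fct}, each of the three $a$-dependent corrections of $G_{\zbar}$ (the sum linear in $a_p^\sigma$, the sum linear in $a_i^\sigma$, and the bilinear sum in $a_i^\sigma a_p^\tau$) appears in $H_{i,p}$ with alternating signs $+{-}{-}+$ and cancels termwise. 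Combined with $\xi^\ast g_{\alpha\beta;k}\,\xi^\ast g_{\mu\nu;l}=\sum_{i,p}x_i^k x_p^l a_i^\alpha b_i^\beta a_p^\mu b_p^\nu$, this matches the previous display.

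The main obstacle is only bookkeeping: the formula \eqref{Eq:G-fct} for $G_{\zbar}$ has been engineered precisely so that the above telescoping happens, and the same engineering yields $\sum_\alpha G_{\zbar}(i,j;\alpha,\beta)a_i^\alpha=0$ on $\h_{\reg}$ (a short computation using $\sum_\alpha a_i^\alpha=1$ together with antisymmetry of $(z_{\alpha\beta})$), which is exactly what ensures that $\psi_{\zbar}^\loc$ respects the defining constraint of $\h_{\reg}$. $S_n$-equivariance is manifest in \eqref{Eq:psi}, Jacobi is inherited from $\psi_{\zbar}$ via the isomorphism $\xi$, and uniqueness is immediate from prescribing the bracket on the coordinate functions.
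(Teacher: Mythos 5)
Your proposal is correct and follows essentially the same route as the paper: compute $\psi_{\zbar}$ on the invariant generators $f_k$, $g_{\alpha\beta;k}$ (your coefficient $z_{\alpha\mu}-z_{\alpha\nu}-z_{\beta\mu}+z_{\beta\nu}$ agrees with the paper's \eqref{Eq:psi-fg} after relabelling), then match against $\psi_{\zbar}^\loc$ through $\xi^\ast$ using \eqref{Eq:xifg}, with the Jacobi identity for $\psi_{\zbar}^\loc$ inherited through the isomorphism. The only difference is that you carry out explicitly the telescoping verification $H_{i,p}=z_{\alpha\mu}-z_{\alpha\nu}-z_{\beta\mu}+z_{\beta\nu}$ and the constraint identity $\sum_\alpha G_{\zbar}(i,j;\alpha,\beta)a_i^\alpha=0$, which the paper leaves as an exercise (referring to Appendix A.4 of \cite{CF}) and as a post-lemma remark, respectively.
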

\begin{proof}
Gathering \eqref{Eq:Psi-RSa}, \eqref{Eq:Psi-RSg}--\eqref{Eq:Psi-RSh} and \eqref{Eq:RS-functLoc}, we compute
\begin{equation} \label{Eq:psi-fg}
 \psi_{\zbar}(f_k,f_l)=0,\,\,  \psi_{\zbar}(f_k,g_{\alpha\beta;l})=0, \quad
 \psi_{\zbar}(g_{\gamma \epsilon;k},g_{\alpha\beta;l})= (z_{\gamma \alpha}+ z_{\epsilon \beta} - z_{\gamma \beta} - z_{\epsilon \alpha}) \,g_{\gamma \epsilon;k}\, g_{\alpha\beta;l},
\end{equation}
for any $k,l\geq 1$ and $1\leq \alpha,\beta,\gamma,\epsilon \leq d$.
Checking that $\xi$ intertwines the two Poisson structures\footnote{A priori, $\psi_{\zbar}$ is only an antisymmetric biderivation on $\h_\reg/S_n$, and we deduce that it is a Poisson bracket because it is one on $\Ccal^\circ$ and $\xi$ is an isomorphism intertwining the two structures.} thanks to \eqref{Eq:xifg} is an exercise along the lines of Appendix A.4 in \cite{CF} (though much easier). 
\end{proof}

Note that $\psi_{\zbar}^\loc$ is well-defined because $\sum_{\alpha=1}^d G_{\zbar}(i,j;\alpha,\beta)a_i^\alpha=0$ for any $1\leq i\leq n$.

\begin{lemma} \label{Lem:Casim}
The Poisson bracket $\psi_{\zbar}^\loc$ is degenerate and admits the following Casimirs: 
\begin{equation} \label{Eq:Casim}
 x_1,\ldots,x_n,\quad a_j^\alpha b_j^\alpha, \text{ with }1\leq j \leq n,\,\, 1\leq \alpha \leq d\,.
\end{equation} 
\end{lemma}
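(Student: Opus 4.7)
The plan is to verify directly from the formulas \eqref{Eq:psi1}--\eqref{Eq:psi4} that each of the listed elements has zero Poisson bracket with the generators $x_k, a_k^\beta, b_k^\beta$ of the coordinate ring of $\h_\reg/S_n$. Since $\psi_{\zbar}^\loc$ is a biderivation, this suffices to conclude that these elements are Casimirs, and the existence of non-constant Casimirs immediately proves that $\psi_{\zbar}^\loc$ is degenerate.

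For the coordinates $x_i$, the claim is immediate: \eqref{Eq:psi1} records that $\psi_{\zbar}^\loc(x_i, -)=0$ on each family of generators.

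For the products $a_j^\alpha b_j^\alpha$, I first use that $x_k$ is already known to be Casimir, so only the brackets with $a_k^\beta$ and $b_k^\beta$ need checking. Expanding via Leibniz and substituting \eqref{Eq:psi2}--\eqref{Eq:psi3} gives
\begin{equation*}
\psi_{\zbar}^\loc(a_k^\beta,\, a_j^\alpha b_j^\alpha)
= G_{\zbar}(k,j;\beta,\alpha)\, a_k^\beta a_j^\alpha b_j^\alpha
+ a_j^\alpha\bigl(-G_{\zbar}(k,j;\beta,\alpha)\bigr)\, a_k^\beta b_j^\alpha = 0,
\end{equation*}
and a similar computation for $\psi_{\zbar}^\loc(b_k^\beta,\, a_j^\alpha b_j^\alpha)$, where the two terms produced by the Leibniz rule cancel thanks to the antisymmetry of $G_{\zbar}(i,j;\alpha,\beta)$ under simultaneously swapping $(i,\alpha)\leftrightarrow (j,\beta)$ (cf. the statement following \eqref{Eq:G-fct}).

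Finally, each listed Casimir is manifestly $S_n$-invariant in the sense that the family $\{x_i\}$ (resp.\ $\{a_j^\alpha b_j^\alpha\}$) is permuted by the $S_n$-action, so the elementary symmetric polynomials in the $x_i$ and in the $a_j^\alpha b_j^\alpha$ (for each fixed $\alpha$) descend to non-constant Casimirs of $\br{-,-}_{\psi_{\zbar}^\loc}$ on the quotient $\h_\reg/S_n$, confirming the degeneracy. The only real point requiring care is the sign bookkeeping in the $(b_k^\beta, a_j^\alpha b_j^\alpha)$ computation, which is routine once one notes the antisymmetry of $G_{\zbar}$; no further obstacle is expected.
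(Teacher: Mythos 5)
Your proposal is correct and follows essentially the same route as the paper: both arguments reduce the Casimir property to a Leibniz-rule computation on the generators $(x_k,a_k^\beta,b_k^\beta)$ using \eqref{Eq:psi1}--\eqref{Eq:psi4} and the antisymmetry of $G_{\zbar}$, and deduce degeneracy from the existence of non-constant Casimirs. Your extra remark that one should pass to symmetric functions of the listed elements to obtain genuine $S_n$-invariant Casimirs on $\h_\reg/S_n$ is a small but welcome clarification of a point the paper leaves implicit.
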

\begin{proof}
To be degenerate, one needs that the algebra of Casimirs defined as 
$$\{ F\in \CC[\h_\reg]^{S_n} \mid \psi_{\zbar}^\loc(F,G)=0,\,\, \forall G\in \CC[\h_\reg]^{S_n} \}$$
has positive (Krull) dimension. Thus, it is direct if we establish that the elements \eqref{Eq:Casim} are Casimirs, or equivalently that they Poisson commute under $\psi^\loc_{\zbar}$ with the generators $(x_j,a_j^\alpha,b_j^\alpha)$. 
Any $x_i$ is clearly a Casimir by \eqref{Eq:psi1}. 
Next, for arbitrary $1\leq j \leq n$, $1\leq \beta \leq d$, one has  
\begin{equation*}
 \psi^\loc_{\zbar}(a_i^\alpha b_i^\alpha,a_j^\beta)
 = \psi^\loc_{\zbar}(a_i^\alpha,a_j^\beta) b_i^\alpha + a_i^\alpha \psi^\loc_{\zbar}(b_i^\alpha,a_j^\beta) 
 = (G_{\zbar}(i,j;\alpha,\beta)+G_{\zbar}(j,i;\beta,\alpha)) \, a_i^\alpha b_i^\alpha a_j^\beta
 =0,
\end{equation*}
by \eqref{Eq:psi2}--\eqref{Eq:psi3} and the antisymmetry of \eqref{Eq:G-fct}. 
Similarly $\psi^\loc_{\zbar}(a_i^\alpha b_i^\alpha,b_j^\beta)=0$. Thus the element $a_i^\alpha b_i^\alpha$ is also a Casimir. 
\end{proof}

By reduction of the quasi-Poisson pencil given in \ref{ss:RS-pencil}, 
any bivector $z_0 P+\psi_{\zbar}$ on $\Ccal_{n,d,q}^\times$ with $z_0\in \CC$ is Poisson, and it is non-degenerate whenever $z_0\neq 0$, cf. Corollary \ref{Cor:Pencil-redMQV}. Restricting the Poisson pencil to $\Ccal^\circ$, the isomorphism $\xi:\h_\reg/S_n \to \Ccal^\circ$ that intertwines the Poisson structures $\{-,-\}_0$ with $P$ (by Proposition \ref{Prop:CF}) and $\psi_{\zbar}^\loc$ with  $\psi_{\zbar}$ (by Lemma \ref{Lem:XiPsi}) is therefore intertwining the structure 
$z_0 \{-,-\}_0+\psi_{\zbar}^\loc$ with $z_0 P + \psi_{\zbar}$, making the former a Poisson bracket for any possible parameter. These observations can be summarized in the following form. 

\begin{corollary} \label{Cor:RS-rank}
There is a Poisson pencil on $\h_\reg/S_n$ spanned by the Poisson brackets 
$\{-,-\}_0$ and $\psi_{\zbar}^\loc$, with $\zbar=(z_{\alpha \beta})_{1\leq \alpha<\beta\leq d}$ defined as in \ref{ss:RS-pencil}. Any linear combination $z_0 \{-,-\}_0 + \psi_{\zbar}^\loc$ with $z_0\in \CC$ is therefore a Poisson bracket, which is non-degenerate when $z_0\neq 0$. 
This pencil (and therefore the pencil spanned by $P$ and $\psi_{\zbar}$ on $\Ccal_{n,d,q}^\times$) has order $r\geq \frac{(d-1)(d-2)}{2}+1$.  
\end{corollary}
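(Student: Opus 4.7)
The plan is to deduce the Poisson-pencil part from the earlier material via the isomorphism $\xi$, and then to handle the order bound by evaluating the bivectors at a carefully chosen point. By Proposition \ref{Prop:CF} and Lemma \ref{Lem:XiPsi}, $\xi$ intertwines $\{-,-\}_0$ with $P$ and $\psi_{\zbar}^\loc$ with $\psi_{\zbar}$ on $\Ccal^\circ$; hence $z_0\{-,-\}_0+\psi_{\zbar}^\loc$ corresponds to $z_0 P+\psi_{\zbar}$. Corollary \ref{Cor:Pencil-redMQV} (applied through Theorem \ref{Thm:Pencil-MQV-bis} to $M_{\overline{Q}_d}^\gamma$) then guarantees that the latter is Poisson for every $(z_0,\zbar)$ and non-degenerate when $z_0\neq 0$, since $\Ccal^\circ$ lies in the smooth locus.

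For the order bound, I aim to exhibit $\binom{d-1}{2}+1$ linearly independent bivectors at a single point $x_0\in\h_\reg$. The first observation is that $\{-,-\}_0$ is never in the linear span of the $\psi_{\zbar}^\loc$: by Lemma \ref{Lem:Casim}, each $x_i$ is a Casimir for every $\psi_{\zbar}^\loc$, whereas $\{x_i,b_i^\alpha\}_0 = x_i b_i^\alpha$ is nonzero at a generic point by \eqref{Eqh1}. It therefore suffices to show that the linear map $\zbar\mapsto \psi_{\zbar}^\loc|_{x_0}$ has rank $\geq\binom{d-1}{2}$ at some $x_0$.

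The key idea is to evaluate at a point where the spin data are constant in the particle index: $a_i^\alpha=c_\alpha$ with fixed $c_\alpha\in\CC^\times$ and $\sum_\alpha c_\alpha=1$, while the $x_i$ are distinct with $x_i\neq qx_j$ and the $b_i^\alpha$ are generic. At such $x_0$, the cubic term $\sum_{\mu,\nu}z_{\mu\nu}c_\mu c_\nu$ in \eqref{Eq:G-fct} vanishes by antisymmetry of $(z_{\alpha\beta})$, so
\[
G_{\zbar}(i,j;\alpha,\beta)=H_{\alpha\beta}:=z_{\alpha\beta}+s_\alpha-s_\beta,\qquad s_\alpha:=\sum_\mu z_{\mu\alpha}c_\mu,
\]
independently of $i,j$. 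The next step will be to show that, at such an $x_0$, vanishing of the bivector $\psi_{\zbar}^\loc$ is equivalent to $H\equiv 0$. Once this is done, the kernel of $\zbar\mapsto H$ is easy: $H=0$ forces $z_{\alpha\beta}=s_\beta-s_\alpha$, and substituting back into the definition of $s_\alpha$ yields the consistency condition $\sum_\mu s_\mu c_\mu=0$; conversely, any such $s\in\CC^d$ produces an element of the kernel. Hence $\dim\ker=d-1$ and $\dim\,\mathrm{image}=\binom{d}{2}-(d-1)=\binom{d-1}{2}$, which combined with the independence of $\{-,-\}_0$ yields the lower bound.

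The delicate step I expect to be the main obstacle is the equivalence ``$\psi_{\zbar}^\loc|_{x_0}=0 \Longleftrightarrow H=0$'' in the constrained tangent space (with relations $\sum_\alpha \partial_{a_i^\alpha}=0$ built in). I plan to handle it by passing to an explicit chart, eliminating $a_i^d=1-\sum_{\alpha<d}a_i^\alpha$, and reading from \eqref{Eq:psi2}--\eqref{Eq:psi4} that the entries $(H_{\alpha\beta})_{\alpha<\beta<d}$ appear as coefficients of independent tangent bivectors multiplied by the nonzero products $c_\alpha c_\beta$, $c_\alpha b_j^\beta$, and so on; their vanishing is forced directly, and the remaining entries $H_{\alpha,d}$ are then recovered from the identities $\sum_\alpha c_\alpha H_{\alpha\beta}=0$ (which follow by antisymmetry of $z$), completing the rank computation.
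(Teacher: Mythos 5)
Your argument is correct and follows essentially the same route as the paper: there too the order bound is reduced to computing the rank of $\zbar\mapsto\psi^{\loc}_{\zbar}$ at a point where the spin variables are independent of the particle index (the paper takes $a_i^\alpha=\delta_{\alpha,1}$, a special case of your $a_i^\alpha=c_\alpha$), the kernel is found to be $(d-1)$-dimensional, and one adds $1$ because $\{-,-\}_0$ is non-degenerate while every $\psi^{\loc}_{\zbar}$ annihilates the $x_i$. The step you flag as delicate is in fact immediate: since the $b_i^\alpha$ are unconstrained coordinates and nonzero at $x_0$, the vanishing of the coefficients $G_{\zbar}(i,j;\alpha,\beta)\,b_i^\alpha b_j^\beta$ of the independent bivectors $\partial_{b_i^\alpha}\wedge\partial_{b_j^\beta}$ already forces $H\equiv 0$, so no passage to a chart in the $a$-variables is needed.
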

\begin{proof}
We only need to compute the bound on the order of the pencil. 
Since each $\psi_{\zbar}^\loc$ is degenerate while $\{-,-\}_0$ is not, 
the order equals $\mathrm{ord}(\psi_{\zbar}^\loc)+1$. 

Assume that for some $\zbar$, $\psi_{\zbar}^\loc\equiv 0$ identically.  
By the definition of $\h_\reg$ \eqref{Eq:hreg}, we can find a point $m\in \h_\reg$ where all $(b_i^\alpha)$ are nonzero while  $a_i^\alpha=\delta_{\alpha,1}$. At such a point, \eqref{Eq:psi4} reads  
\begin{equation} \label{Eq:Pf-Rank1}
 \psi_{\zbar,m}^\loc (b_i^\alpha,  b_j^\beta)=  
\left[z_{\alpha \beta} + z_{1 \alpha}- z_{1\beta} \right] \, b_i^\alpha  b_j^\beta\,, \quad 1\leq \alpha,\beta \leq d\,,
\end{equation}
as $z_{11}=0$ by skewsymmetry of the matrix $(z_{\alpha \beta})$. By assumption, \eqref{Eq:Pf-Rank1} vanishes and we get that 
\begin{equation} \label{Eq:Pf-Rank2}
 z_{\alpha \beta} = z_{\alpha 1}+ z_{1\beta} \,,
\end{equation}
for any $1\leq \alpha,\beta \leq d$. Hence we can fix arbitrarily $z_{12},\ldots,z_{1d}$ then define the other parameters through \eqref{Eq:Pf-Rank2} and still get $\psi_{\zbar,m}^\loc=0$. As we have freedom in choosing parameters, 
$$ \mathrm{ord}(\psi_{\zbar,m}^\loc) = \frac{d(d-1)}{2}- (d-1) = \frac{(d-1)(d-2)}{2}\,.$$
It follows by definition that $\mathrm{ord}(\psi_{\zbar}^\loc) \geq \mathrm{ord}(\psi_{\zbar,m}^\loc)$, and we get the claimed bound.
\end{proof}

\begin{remark} 
We expect that the inequality for the order given in Corollary \ref{Cor:RS-rank} is an equality. 
This is the case for $d=2$, as the order is precisely $1$. To see this, note that $\psi_{\zbar}^{\loc}\equiv 0$ identically for any $z_{12}\in \CC$  
because $G_{\zbar}(i,j;\alpha,\beta)$ \eqref{Eq:G-fct} is always vanishing on $\h_{\reg}$.
\end{remark}

\subsubsection{Comparison with the structure of Arutyunov-Olivucci}

In \cite{AO}, the spin RS phase space $\Ccal_{n,d,q}^\times$ \eqref{Eq:Cndq} is obtained by reduction using Poisson-Lie symmetries. Their local parametrization, presented in \cite[\S5]{AO} (with $\ell=d$), is related to the one of Proposition \ref{Prop:CF} by substituting their matrices 
\begin{equation}
 Q,\,L \in \Gl_n(\CC), \quad \mathbf{a} \in \Mat_{n\times d}(\CC), \quad \mathbf{c} \in \Mat_{d\times n}(\CC),
\end{equation}
respectively with 
\begin{equation}
 X, \, \kappa XZX^{-1} \in \Gl_n(\CC), \quad (A_1 \cdots A_d) \in \Mat_{n\times d}(\CC), \quad 
 (B_1^T \cdots B_d^T)^T \in \Mat_{d\times n}(\CC)\,.
\end{equation}
They have an additional parameter $\kappa\neq 0$ which can be omitted by simultaneously relabeling $\mathbf{a}=\sqrt{\kappa} \mathbf{a}$, $\mathbf{c}=\sqrt{\kappa} \mathbf{c}$ and rescaling their Poisson brackets by $\kappa^{-1}$. 
Thus, we can take $\kappa=1$, and write down the 2 Poisson brackets that they derived locally on $\h_{\reg}/S_n$ (see\footnote{These expressions appear in compact $r$-matrix notation in \cite{AO}, and their presentation in \cite{Ar} contains typos.} \cite[(6.4)]{AO} and \cite{Ar}) by
\begin{subequations} \label{Eq:AO}
 \begin{align}
 \br{x_i,x_j}_{\pm}=&0\,,\quad \br{x_i,  a_j^\alpha}_{\pm}=0\,,\quad 
\br{x_i,  b_j^\alpha}_{\pm}=\delta_{ij} x_i  b_j^\alpha \,, \label{EqAO1} \\
\br{ a_i^\alpha,  a_j^\beta}_{\pm}=&
\frac12 \delta_{(i\neq j)}\frac{x_i+x_j}{x_i-x_j}
( a_i^\alpha  a_j^\beta + a_j^\alpha  a_i^\beta - a_j^\alpha  a_j^\beta -  a_i^\alpha  a_i^\beta ) 
\mp \frac12 o(\beta, \alpha)  a_j^\alpha a_i^\beta \nonumber \\
&\mp \frac12 \sum_{\gamma=1}^d o(\alpha,\gamma) a_j^\beta  a_j^\alpha a_i^\gamma
\pm \frac12 \sum_{\gamma=1}^d o(\beta,\gamma) a_i^\alpha  a_i^\beta  a_j^\gamma 
 \mp\frac12 \left( \sum_{\gamma,\epsilon} o(\epsilon,\gamma) a_i^\epsilon  a_j^\gamma\right)  a_i^\alpha  a_j^\beta\,, \label{EqAO2} \\
\br{ a_i^\alpha,  b_j^\beta}_{\pm}=&
 a_i^\alpha Z_{ij}-\delta_{\alpha\beta}Z_{ij}
-\frac12 \delta_{(i\neq j)}\frac{x_i+x_j}{x_i-x_j} ( a_i^\alpha- a_j^\alpha) b_j^\beta \nonumber 
+ a_i^\alpha \sum_{\gamma\gtrless \beta} a_i^\gamma  b_j^\gamma 
-\delta_{\alpha\beta} \sum_{\gamma\gtrless \beta}  a_i^\gamma  b_j^\gamma \nonumber \\
&\pm\frac12 \sum_{\gamma=1}^d o(\alpha,\gamma) b_j^\beta  a_j^\alpha a_i^\gamma 
\pm\frac12 \left( \sum_{\gamma,\epsilon} o(\epsilon,\gamma) a_i^\epsilon  a_j^\gamma\right)  a_i^\alpha  b_j^\beta 
 +\frac12  a_i^\alpha  a_i^\beta  b_j^\beta - \frac12 \delta_{\alpha \beta} a_i^\alpha  b_j^\beta \,, \label{EqAO3} \\
\br{ b_i^\alpha,  b_j^\beta}_{\pm}=&
\frac12 \delta_{(i\neq j)}\frac{x_i+x_j}{x_i-x_j} ( b_i^\alpha b_j^\beta  +  b_j^\alpha b_i^\beta) 
- b_i^\alpha Z_{ij} +  b_j^\beta Z_{ji} \pm \frac12 o(\beta,\alpha)  b_j^\alpha  b_i^\beta \nonumber \\
&- b_i^\alpha \sum_{\gamma\gtrless \beta} a_i^\gamma  b_j^\gamma
+ b_j^\beta \sum_{\gamma\gtrless \alpha} a_j^\gamma  b_i^\gamma 
\mp\frac12 \left( \sum_{\gamma,\epsilon} o(\epsilon,\gamma) a_i^\epsilon  a_j^\gamma\right)  b_i^\alpha  b_j^\beta 
+\frac12  ( a_j^\alpha -  a_i^\beta) b_i^\alpha  b_j^\beta \,, \label{EqAO4}
 \end{align}
\end{subequations}
where we use the notation from Proposition \ref{Prop:CF}. 
Observe that both Poisson brackets are equivalent under relabeling spin indices through $\alpha \mapsto d+1-\alpha$ (this was remarked before reduction in \cite{FF21}). 
Hence we shall only focus on the minus Poisson bracket. 

\begin{proposition} \label{Prop:pencilRS}
 The minus Poisson bracket $\br{-,-}_-$ constructed by Arutyunov and Olivucci \cite{AO} and the Poisson bracket $\br{-,-}_0$ constructed by Chalykh and Fairon \cite{CF} belong to the pencil of compatible Poisson brackets from Corollary \ref{Cor:RS-rank}. 
\end{proposition}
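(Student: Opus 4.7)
The strategy is to exhibit explicit parameters $\zbar=(z_{\alpha\beta})_{1\leq \alpha<\beta\leq d}$ for which the Arutyunov--Olivucci minus bracket coincides with $\br{-,-}_0+\psi_{\zbar}^{\loc}$ as Poisson brackets on $\h_{\reg}/S_n$. Given Corollary~\ref{Cor:RS-rank}, such an identification immediately places $\br{-,-}_-$ inside the pencil spanned by $\br{-,-}_0$ and the bivectors $\psi_{\zbar}^{\loc}$, yielding the claimed compatibility. The candidate parameters are dictated by matching the purely algebraic contributions in the difference $\br{-,-}_- - \br{-,-}_0$: a preliminary inspection of the quadratic spin terms in $\br{a_i^\alpha,a_j^\beta}_{-}-\br{a_i^\alpha,a_j^\beta}_{0}$ forces the ansatz
\[
z_{\alpha\beta}=\tfrac{1}{2}\, o(\alpha,\beta),
\]
that is $z_{\alpha\beta}=1/2$ for $\alpha<\beta$.

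With this choice, the function $G_{\zbar}(i,j;\alpha,\beta)$ in \eqref{Eq:G-fct} reduces to $\tfrac12 o(\alpha,\beta)+\tfrac12\sum_\mu(o(\mu,\alpha)a_j^\mu-o(\mu,\beta)a_i^\mu)+\tfrac12\sum_{\mu,\nu}o(\mu,\nu)a_i^\mu a_j^\nu$, and I would verify that this is exactly the coefficient of $a_i^\alpha a_j^\beta$ appearing as the difference of the $a$-$a$ brackets. Since $\br{-,-}_0$, $\br{-,-}_-$ and $\psi_{\zbar}^{\loc}$ are all antisymmetric biderivations on $\CC[\h_{\reg}]^{S_n}$, the identity $\br{-,-}_-=\br{-,-}_0+\psi_{\zbar}^{\loc}$ only needs to be checked on the finite list of pairs of generators $(x_i,a_j^\alpha,b_j^\alpha)$; the brackets involving any $x_i$ coincide trivially on both sides and $\psi_{\zbar}^{\loc}$ vanishes on those pairs by \eqref{Eq:psi1}, so three nontrivial sectors remain.

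For the $\{a,a\}$, $\{a,b\}$ and $\{b,b\}$ sectors, a helpful preliminary remark is that the ``kinematic'' terms involving the rational factor $\tfrac{x_i+x_j}{x_i-x_j}$ and the Lax-type entries $Z_{ij}$ already agree between \eqref{Eqh} and \eqref{Eq:AO}, so they drop out of the difference. This matches the purely polynomial form \eqref{Eq:psi2}--\eqref{Eq:psi4} of $\psi_{\zbar}^{\loc}$, and reduces the verification to an identity among the cubic and quartic spin monomials weighted by $o(\alpha,\beta)$ on one side and by the three summands of $G_{\zbar}$ on the other. Both identities follow from the same combinatorial manipulation carried out for $\{a,a\}$, so the $\{a,b\}$ and $\{b,b\}$ cases can be handled in parallel.

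The main obstacle is bookkeeping. The formulas \eqref{EqAO2}--\eqref{EqAO4} contain many cubic and quartic spin contributions, and one must reconcile the conventions of \cite{CF} and \cite{AO}—in particular, the notation $\sum_{\gamma\gtrless\beta}$ in \eqref{EqAO3}--\eqref{EqAO4} has to be read consistently with $\sum_{\gamma=1}^{\beta-1}$ of \eqref{Eqh3}--\eqref{Eqh4} so that the rational pieces really do cancel. A recurring algebraic input is the constraint $\sum_\alpha a_i^\alpha=1$ defining $\h_{\reg}$ in \eqref{Eq:hreg}, which is needed both to make $\psi_{\zbar}^{\loc}$ well defined (through the identity $\sum_\alpha G_{\zbar}(i,j;\alpha,\beta)\,a_i^\alpha=0$, itself a direct consequence of the skewsymmetry of $(z_{\alpha\beta})$) and to simplify the quartic terms in the Arutyunov--Olivucci formulas; once these reductions are performed, the match of coefficients with $z_{\alpha\beta}=\tfrac12 o(\alpha,\beta)$ should follow mechanically.
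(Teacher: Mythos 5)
Your proposal matches the paper's proof: the paper also takes $z_{\alpha\beta}^-=\tfrac12$ for $\alpha<\beta$ (so the skewsymmetric matrix is $\tfrac12 o(\alpha,\beta)$), writes out the same specialization of $G_{\zbar}$, and reduces the claim to checking $\br{-,-}_-=\br{-,-}_0+\psi_{\zbar^-}^{\loc}$ on generators using $\sum_\gamma a_j^\gamma=1$. Your sector-by-sector description of that verification is consistent with what the paper leaves as an exercise, so the approach is essentially identical.
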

\begin{proof}
By definition, $\br{-,-}_0$ is certainly in the pencil. 
Take the sequence of parameters $\zbar^-$ given by $z_{\alpha\beta}^-=\frac12$, $\alpha<\beta$. The corresponding skewsymmetric matrix has entries $\frac12 o(\alpha,\beta)$, and the Poisson structure $\psi_{\zbar^-}^\loc$ \eqref{Eq:psi} is given in terms of
\begin{equation}
 G_{\zbar^-}(i,j;\alpha,\beta)= \frac12 o(\alpha,\beta)
 + \frac12 \sum_{\gamma}  o(\gamma,\alpha) a_j^\gamma  -  \frac12 \sum_{\gamma}  o(\gamma,\beta) a_i^\gamma
 +\frac12 \sum_{\gamma,\epsilon} o(\epsilon,\gamma) a_i^\epsilon a_j^\gamma\,.
\end{equation}
It is then an exercise to check that $\br{-,-}_-=\br{-,-}_0 + \psi_{\zbar^-}^\loc$ using \eqref{Eqh} and \eqref{Eq:AO}, remembering that $\sum_\gamma a_j^\gamma=1$ for each $1\leq j\leq n$.
\end{proof}

\begin{remark}
In \cite{CF}, it is conjectured that the irreducible component of $\Ccal_{n,d,q}^\times$ containing $\Ccal^\circ=\xi(\h_\reg/S_n)$ is the whole variety $\Ccal_{n,d,q}^\times$, in analogy with the case $d=1$ due to Oblomkov \cite{Ob}. 
Under that conjecture, we can reformulate Proposition \ref{Prop:pencilRS} as the compatibility of the Poisson structures constructed in \cite{AO,CF} on the whole spin RS phase space $\Ccal_{n,d,q}^\times$. 
\end{remark}

Finally, we can use Proposition \ref{Prop:pencilRS} to provide another quiver interpretation of the minus Poisson structure of Arutyunov and Olivucci \cite{AO}. 
To do so, we need to unwind the construction of the master phase space $M_{\overline{Q}_d}^{\gamma}$ considered in \ref{S:sRS-Gen}. At a purely geometric level, the quasi-Poisson structure is obtained by considering the internally fused double $\Gl_n\times \Gl_n = \{(X,Z)\}$ as a $\Gl_n$-variety, with $d$ copies of the space 
$(T^\ast \CC^n)^\circ:=\{(W,V)\in T^\ast \CC^n \mid 1+VW\neq 0\}$ seen as a $(\Gl_n \times \CC^\times)$-variety, and we perform fusion of these different actions in a specific order. 
Observe from Remark \ref{Rem:Cstar-Fus} that the fusion terms added by the $\CC^\times$ actions will, in fact, appear as  bivectors $\psi_{\zbar}$ in the pencil described in \ref{ss:RS-pencil}. 
By tracking all these fusion terms, we see that they correspond to adding $-\psi_{\zbar^-}$, for $\zbar^-$ given by $z_{\alpha\beta}^-=\frac12$, $\alpha<\beta$. 
It follows from the proof of Proposition \ref{Prop:pencilRS}  that these are precisely the terms that are killed to go from $\br{-,-}_0$ (the local expression of the bivector $P$) to $\br{-,-}_-$ (the Poisson structure from \cite{AO}). 
Forgetting the fusion of these $\CC^\times$ actions can be reinterpreted as follows. 
We consider $\overline{Q}_d^{\infty_d}$, where $Q_d^{\infty_d}$ consists of the vertex set $\{0,\infty_1,\ldots,\infty_d\}$ and the arrows 
$x:0\to 0$, $v_\alpha:\infty_\alpha \to 0$ for $1\leq \alpha \leq d$. We then perform the reduction of 
$M_{\overline{Q}_d^{\infty_d}}^{(0,1,\ldots,1)}(n,1,\ldots,1)$ at $q\Id_n \in \Gl_n(\CC)$, i.e. we consider $\infty_1,\ldots,\infty_d$ as `framing vertices' with respect to which we are not reducing\footnote{We use the standard trick that a (Hamiltonian) quasi-Poisson $(G\times H)$-variety where $H$ is abelian can be viewed as a (Hamiltonian) quasi-Poisson $G$-variety. Here $H=(\CC^\times)^d$ and $G=\Gl_n(\CC)$.}. In this way, Van den Bergh's quasi-Poisson structure associated with $Q_d^{\infty_d}$ yields the minus Poisson structure of \cite{AO} in local coordinates.

\subsubsection{Hamiltonian formulation of the spin RS system}

It is proved in \cite{AO,CF} that, with $h:=2(q^{-1}-1) \tr(Z)$, the equations of motion for the derivation $\frac{d}{dt}:=\br{h,-}$ (where $\br{-,-}$ denotes the Poisson bracket from the corresponding reference) satisfy locally
\begin{subequations}
 \begin{align}
 & \frac{dx_i}{dt} =  2{f}_{ii}\,x_i \,,\quad
\frac{d a_i^\alpha}{dt} =
\sum_{k\neq i} V_{ik}{f}_{ik} (a_k^\alpha-a_i^\alpha) \,,\quad
\frac{d b_i^\alpha}{dt} =\sum_{k\neq i}
\left(\,V_{ik}{f}_{ik} b_i^\alpha -V_{ki}{f}_{ki} b_k^\alpha  \,\right)\,, \label{Trigc} \\
&\text{where}\quad V_{ik}=\frac{x_i+x_k}{x_i-x_k}-\frac{x_i+qx_k}{x_i-qx_k}\,, \quad f_{ij}=\sum_{1\leq \alpha \leq d} a_i^\alpha b_j^\alpha \,. \label{EqPotV}
 \end{align}
\end{subequations}
These are the equations of the spin RS system introduced by Krichever and Zabrodin \cite{KrZ} under the constraints $\sum_\alpha a_i^\alpha=1$.

\begin{proposition} \label{Prop:HamRS}
 For any $\zbar \in \CC^{d(d-1)/2}$, let $\br{-,-}:=\br{-,-}_0+\psi^\loc_{\zbar}$. Then the equations \eqref{Trigc} hold for $h:=2(q^{-1}-1) \tr(Z)=2\sum_i f_{ii}$.
In particular, any non-degenerate Poisson bracket from the pencil on $\h_\reg/S_n$ gives the equations \eqref{Trigc}, up to rescaling of $h$.
\end{proposition}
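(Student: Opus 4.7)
The plan is to reduce everything to the case $\zbar=0$, which is already known from \cite{AO,CF}. More precisely, since $\br{-,-}=\br{-,-}_0+\psi^{\loc}_{\zbar}$ is an antisymmetric biderivation, the associated derivation $\br{h,-}$ equals $\br{h,-}_0+\psi^{\loc}_{\zbar}(h,-)$. I would first argue that the equations for $\br{h,-}_0$ are precisely \eqref{Trigc}, as this follows from Proposition \ref{Prop:CF}, the expression $h=2(q^{-1}-1)\tr(Z)$, and the computation already carried out in \cite{AO,CF}. Thus the proposition reduces to showing $\psi^{\loc}_{\zbar}(h,-)\equiv 0$ on the generators $x_i,a_j^\alpha,b_j^\alpha$.

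The key observation is the alternative presentation $h=2\sum_{i,\alpha} a_i^\alpha b_i^\alpha$, obtained from the definition of $f_{ij}$ in \eqref{EqPotV} and $f_{ii}=\sum_\alpha a_i^\alpha b_i^\alpha$. By Lemma~\ref{Lem:Casim}, each summand $a_i^\alpha b_i^\alpha$ is a Casimir of $\psi^{\loc}_{\zbar}$. Hence $h$ is a sum of Casimirs for $\psi^{\loc}_{\zbar}$, so $\psi^{\loc}_{\zbar}(h,F)=0$ for every $F\in\CC[\h_{\reg}]^{S_n}$. Combining this with the previous step, $\br{h,-}=\br{h,-}_0$ on generators and hence everywhere, yielding the equations \eqref{Trigc}.

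For the last sentence of the statement, any non-degenerate element of the Poisson pencil on $\h_{\reg}/S_n$ is of the form $z_0\br{-,-}_0+\psi^{\loc}_{\zbar}$ with $z_0\in\CC^{\times}$ by Corollary~\ref{Cor:RS-rank}. Then $\br{z_0^{-1}h,-}_{\mathrm{pencil}}=z_0^{-1}\bigl(z_0\br{h,-}_0+\psi^{\loc}_{\zbar}(h,-)\bigr)=\br{h,-}_0$, again using that $h$ is a Casimir for $\psi^{\loc}_{\zbar}$, so the equations \eqref{Trigc} are recovered up to the announced rescaling of $h$.

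There is no real obstacle here: the whole argument hinges on identifying $h$ with $2\sum_{i,\alpha} a_i^\alpha b_i^\alpha$ and invoking Lemma~\ref{Lem:Casim}. The only subtlety I would double-check is that the Casimirs of Lemma~\ref{Lem:Casim} were stated on $\h_{\reg}$ rather than on $\h_{\reg}/S_n$, but since $h$ itself is $S_n$-invariant and the derivation property of $\psi^{\loc}_{\zbar}$ is compatible with passing to invariants, this causes no issue.
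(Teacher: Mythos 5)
Your proposal is correct and follows essentially the same route as the paper: the paper's proof likewise observes that $h=2\sum_i f_{ii}=2\sum_{i,\alpha}a_i^\alpha b_i^\alpha$ is a linear combination of the Casimirs from Lemma~\ref{Lem:Casim}, hence $\psi^{\loc}_{\zbar}(h,-)=0$ and $\br{h,-}=\br{h,-}_0$, and then handles the non-degenerate case by the same rescaling $h\mapsto z_0^{-1}h$. No gaps.
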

\begin{proof}
Note that $\psi^\loc_{\zbar}(h,-)=0$ because $h$ is a linear combination of Casimirs by Lemma \ref{Lem:Casim}. Thus $\br{h,-}=\br{h,-}_0$, and \eqref{Trigc} holds for $\br{-,-}_0$ by \cite{CF}.

For the second part, recall that a non-degenerate Poisson bracket from the pencil is of the form $\br{-,-}=z_0\br{-,-}_0+\psi^\loc_{\zbar}$ with $z_0\in \CC^\times$. In that case we get \eqref{Trigc} for $\frac{d}{dt}=\br{z_0^{-1}h,-}$. 
\end{proof}

\begin{remark}
In the pencil introduced in \ref{ss:RS-pencil}, we assumed that the parameters written in terms of the arrows of $Q_d$ satisfy $z_{x,v_\alpha}=0$ for all $1\leq \alpha \leq d$. If these parameters were taken to be nonzero, we would get a larger pencil, but the proof of Proposition \eqref{Prop:HamRS} would no longer be valid. To illustrate this claim, note that for such a case the second equality in \eqref{Eq:psi-fg} gets replaced with
$\psi_{\zbar}(f_k,g_{\alpha\beta;l})=k (z_{x,v_\alpha}-z_{x,v_\beta})f_k g_{\alpha\beta;l}$. 
In particular one may deduce $\psi_{\zbar}^\loc(x_i,b_j^\alpha)=(z_{x,v_\alpha}-z_{x,v_\beta}) x_i$, which is nonzero if the parameters $(z_{x,v_1},\ldots,z_{x,v_d})$ are not all equal. 
Similarly, the second item of Proposition \ref{Pr:IntRS} below may no longer be valid for the extended pencil because $\mathcal{H}$ may not lie in the Poisson center of $\mathcal{Q}$.
\end{remark}

\subsection{Integrability} \label{S:sRS-Int}

For any $k\geq 1$ and $1\leq \alpha,\beta \leq d$, introduce the following functions, 
\begin{align*}
 h_k:= \tr(Z^k)\,, \quad t_{k;\alpha \beta}:=\tr(W_\alpha V_\beta Z^k)\,, \quad 
 h_{k;\alpha}:=\tr(\mathcal{Z}_\alpha^k)\,,
\end{align*}
where $\mathcal{Z}_\alpha$ was defined by \eqref{Eq:Rel-VWAB}. 
We form the following commutative subalgebras of $\CC[\Ccal_{n,d,q}^\times]$, 
\begin{align*}
 \mathcal{H}:=&\CC[h_k \mid k\geq 1]\,, \quad 
 \mathcal{Q}:=\CC[t_{k;\alpha \beta} \mid k\geq 1,\,\, 1\leq \alpha,\beta \leq d] \,, \\
 \mathcal{H}_{\mathrm{int}}:=&\CC[h_{k;\alpha} \mid k\geq 1,\,\, 1\leq \alpha \leq d]\,.
\end{align*}
Recall from \cite[\S5]{CF} the following chain of inclusions: $\mathcal{H} \subset \mathcal{H}_{\mathrm{int}} \subset \mathcal{Q}$. 

\begin{proposition} \label{Pr:IntRS}
For any Poisson structure $P+\psi_{\zbar}$ on $\Ccal_{n,d,q}^\times$ with $\zbar$ as in \ref{ss:RS-pencil}, the commutative algebras $\mathcal{H},\mathcal{H}_{\mathrm{int}},\mathcal{Q}$ are Poisson algebras. 
Furthermore, if we denote by $\Ccal^\times$ the irreducible component of $\Ccal_{n,d,q}^\times$ containing $\Ccal^\circ=\xi(\h_\reg/S_n)$: 
 \begin{enumerate}
  \item $\mathcal{H}_{\mathrm{int}}$ is an abelian Poisson algebra of dimension $nd=\frac12\dim(\Ccal^\times)$, hence it defines an integrable system on $\Ccal^\times$;
  \item $\mathcal{Q}$ has codimension $n$ in $\Ccal^\times$ with Poisson center containing $\mathcal{H}$ of dimension $n$ on $\Ccal^\times$, hence it defines a degenerately integrable system on $\Ccal^\times$.
 \end{enumerate}
\end{proposition}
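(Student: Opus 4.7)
The strategy is to exploit the fact that the Chalykh--Fairon bracket $P$ already makes $\mathcal{H}, \mathcal{H}_{\mathrm{int}}, \mathcal{Q}$ into Poisson subalgebras with the claimed properties (by \cite{CF}), so the task reduces to checking the analogous and typically much stronger statements for the extra bivector $\psi_{\zbar}$; linearity of the Poisson bracket then does the rest. The dimensional assertions in parts (1)--(2) depend only on the underlying algebras of functions and are exactly those proved in \cite[\S5]{CF}, so I would not reprove them.

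The first and easiest step is to notice from \eqref{Eq:Psi-RSa} that $\psi_{\zbar}(Z_{ij},-)=0$ for every matrix entry of $Z$. Hence every generator of $\mathcal{H}$ is a $\psi_{\zbar}$-Casimir, which gives at once that $\mathcal{H}$ is abelian under $P+\psi_{\zbar}$ and remains central in any subalgebra in which it was $P$-central. Next, to handle $\mathcal{Q}$, I would rewrite $t_{k;\gamma\epsilon} = V_\epsilon Z^k W_\gamma$ by cyclicity of the trace and pull the $Z^k$ factors out of the $\psi_{\zbar}$-bracket; the Leibniz rule combined with \eqref{Eq:Psi-RSb}--\eqref{Eq:Psi-RSc} then yields
\begin{equation*}
\psi_{\zbar}(t_{k;\gamma\epsilon},\, t_{l;\alpha\beta}) = (z_{\gamma\alpha}+z_{\epsilon\beta}-z_{\gamma\beta}-z_{\epsilon\alpha})\, t_{k;\gamma\epsilon}\, t_{l;\alpha\beta}.
\end{equation*}
Hence $\psi_{\zbar}(\mathcal{Q},\mathcal{Q})\subset \mathcal{Q}$, and together with the analogous statement for $P$ from \cite{CF} this shows $\mathcal{Q}$ is a Poisson subalgebra for $P+\psi_{\zbar}$, with $\mathcal{H}$ in its Poisson center.

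The key technical point is to show $\mathcal{H}_{\mathrm{int}}$ consists of $\psi_{\zbar}$-Casimirs. Working in the parametrization \eqref{Eq:Def-AB}, write $(\mathcal{Z}_\alpha)_{ij}=Z_{ij}+\sum_{\mu\leq\alpha}(A_\mu)_i(B_\mu)_j$, drop the $Z_{ij}$ (already Casimir), and apply Leibniz to each cross term. Using \eqref{Eq:Psi-RSg}--\eqref{Eq:Psi-RSh}, the four pairwise $\psi_{\zbar}$-brackets among $(A_\mu)_i,(B_\mu)_j,(A_\nu)_p,(B_\nu)_q$ each produce the same commutative monomial $(A_\mu)_i(B_\mu)_j(A_\nu)_p(B_\nu)_q$ times $z_{\mu\nu}$ with signs $(+,-,-,+)$, and therefore cancel term by term. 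This gives $\psi_{\zbar}((\mathcal{Z}_\alpha)_{ij},(\mathcal{Z}_\beta)_{pq})=0$ for all indices, hence $\psi_{\zbar}(h_{k;\alpha},h_{l;\beta})=0$; combined with the commutativity of $\mathcal{H}_{\mathrm{int}}$ under $P$ (\cite{CF}), this proves abelianness for $P+\psi_{\zbar}$.

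The main obstacle is really the cancellation in the last step, but it is short and transparent once one observes the rank-one structure of each summand $A_\mu B_\mu$ and the sign pattern enforced by the $A$-vs-$B$ symmetry of $\psi_{\zbar}$; everything else follows from $Z$ being $\psi_{\zbar}$-central together with a direct invocation of \cite[\S5]{CF} for the commutation and dimensional statements under $P$.
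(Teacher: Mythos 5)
Your proposal is correct and follows essentially the same route as the paper: invoke \cite{CF} for the $\zbar=0$ case and the dimension counts, observe that the entries of $Z$ are $\psi_{\zbar}$-Casimirs, compute $\psi_{\zbar}(t_{k;\gamma\epsilon},t_{l;\alpha\beta})=(z_{\gamma\alpha}+z_{\epsilon\beta}-z_{\gamma\beta}-z_{\epsilon\alpha})\,t_{k;\gamma\epsilon}\,t_{l;\alpha\beta}$ to get closure of $\mathcal{Q}$, and use the $(+,-,-,+)$ cancellation on the rank-one blocks to kill the brackets among the $\mathcal{Z}_\alpha$. The only cosmetic difference is that the paper phrases the last step as $\psi_{\zbar}((W_\gamma V_\gamma)_{ij},-)=0$ in the $(V,W)$ parametrization rather than your additive $(A_\mu B_\mu)$ cancellation, which is the same computation.
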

\begin{proof}
When the parameters in $\zbar$ are all zero, this follows from Theorems 2.4 and 5.5 in \cite{CF}. In particular, the stated dimensions always hold since they do not depend on the Poisson structure. 

We can compute in general that $\psi_{\zbar}(h_{k;\alpha},h_{l;\beta})=0$ for any indices 
because $\psi_{\zbar}(Z_{ij},-)=0$ and $\psi_{\zbar}((W_\gamma V_\gamma)_{ij},-)=0$ by \eqref{Eq:Psi-RS}. 
Thus $\mathcal{H}_{\mathrm{int}}$ and its subalgebra $\mathcal{H}$ stay abelian for any Poisson structure. 
Using \eqref{Eq:Psi-RS} again, we find as in \eqref{Eq:psi-fg}, 
\begin{equation}
 \psi_{\zbar}(t_{k;\gamma \epsilon},t_{l;\alpha \beta})
 =(z_{\gamma \alpha}+ z_{\epsilon \beta} - z_{\gamma \beta} - z_{\epsilon \alpha}) \,
 t_{k;\gamma \epsilon}\,t_{l;\alpha \beta},
\end{equation}
so that $\mathcal{Q}$ is closed under $\psi_{\zbar}$, hence it is also closed under $P+\psi_{\zbar}$. 
\end{proof}


\appendix

\section{Explicit correspondence} \label{App:Corr}

We verify the correspondence \eqref{Eq:corrPOm} for Theorem \ref{Thm:qPqHam-MQV-bis}. 
Since the structures are obtained by fusion (cf. Remark \ref{Rem:MQV-fusion} and \cite{VdB1,VdB2,Ya}), it follows from (the complex algebraic analogue of) \cite[Prop. 10.7]{AKSM} that we only need to check the correspondence for a $1$-arrow quiver. 
So take $Q= 1 \stackrel{a}{\longrightarrow} 2$. For the dimension vector $\nfat=(n_1,n_2)$ and parameter $\gamma:=\gamma_a\in \CC$, the data $(\Xtt_a,\Xtt_{a^\ast})$ with 
\begin{equation}
 \Xtt_a=\left( 
 \begin{array}{cc}
  0_{n_1\times n_1}& A \\
  0_{n_1\times n_2}&0_{n_2\times n_2}
\end{array} \right) \,, \quad 
\Xtt_{a^\ast}=\left( 
 \begin{array}{cc}
  0_{n_1\times n_1}&  0_{n_2\times n_1} \\
 B&0_{n_2\times n_2}
\end{array} \right) \,,
\end{equation}
for $A\in \Mat_{n_1\times n_2}(\CC)$, $B\in \Mat_{n_2\times n_1}(\CC)$, 
parametrize a point $\Xtt\in M_{\overline{Q}}$. 
The open subspace $M_{\overline{Q}}^\gamma$ corresponds to requiring $\det(\gamma \Id_{n_1}+AB)\neq 0$, 
and we define $\Phi^\gamma:M_{\overline{Q}}^\gamma\to \Gl_{n_1}\times \Gl_{n_2}$ by 
\begin{equation} 
 \Phi^\gamma(\Xtt)=(\gamma \Id+\Xtt_a \Xtt_{a^\ast})(\gamma\Id+ \Xtt_{a^\ast}\Xtt_a)^{-1}
 =\left( 
 \begin{array}{cc}
  \gamma\Id_{n_1}+AB&  0_{n_2\times n_1} \\
0_{n_1\times n_2} &(\gamma\Id_{n_2}+BA)^{-1}
\end{array} \right)\,.
\end{equation}
If $\gamma=0$, $n_1=n_2$, we are in the case of the quasi-Poisson double of $\Gl_{n_1}(\CC)$ where the correspondence can be found in \cite[Ex.~10.5]{AKSM}.
Hence we can work with $\gamma\neq 0$ from which, up to rescaling, we can assume that $\gamma=1$; therefore we simply write $M_{\overline{Q}}^\gamma,P^\gamma, \omega^\gamma,\Phi^\gamma$ as $M_{\overline{Q}}^\circ,P, \omega,\Phi $ hereafter. 
Let us already note that the matrix-valued vector fields $\partial_a, \partial_{a^\ast}$ take the form 
\begin{equation}
 \partial_a=\left( 
 \begin{array}{cc}
  0_{n_1\times n_1}& 0_{n_2\times n_1} \\
  (\partial/\partial A_{ji})_{ij}&0_{n_2\times n_2}
\end{array} \right) \,, \quad 
\partial_{a^\ast}=\left(
 \begin{array}{cc}
  0_{n_1\times n_1}&  (\partial/\partial B_{ji})_{ij}  \\
 0_{n_1\times n_2}&0_{n_2\times n_2}
\end{array} \right) \,,
\end{equation}
from which we deduce $\partial_a \Xtt_{a^\ast}=\Xtt_{a^\ast}\partial_a=0$ 
and $\partial_{a^\ast} \Xtt_a=\Xtt_a \partial_{a^\ast}=0$. 

Take the dual bases $(E_{ij})_{ij}$ and $(E^{ij}=E_{ji})_{ji}$ of $\gl_{n_1}\times \gl_{n_2}$, where $(i,j)\in \{1,\ldots,n_1\}^{\times 2} \cup \{n_1+1,\ldots,n_1+n_2\}^{\times 2}$ and $E_{ij}$ is the elementary matrix with only nonzero entry in position $(i,j)$. We want to verify \eqref{Eq:corrPOm}, i.e. for any $X\in TM_{\overline{Q}}^\circ$
\begin{equation} \label{Eq:App1}
 P^\sharp \circ \omega^\flat(X) = X- \frac14 \sum_{i,j} \langle X, (\Phi^{-1} \dd\Phi-\dd\Phi\, \Phi^{-1})_{ij}\rangle \, (E_{ij})_{M_{\overline{Q}}^\circ} \,.
\end{equation}
We do so for any $X=(\partial_a)_{ij}=\partial/\partial a_{ji}$ and leave the case $X=(\partial_{a^\ast})_{ij}$ as an exercise; this is sufficient to prove our claim. 
To evaluate the left-hand side of \eqref{Eq:App1}, we compute using \eqref{Eq:omega-MQV}
\begin{align*}
 \omega^\flat((\partial_a)_{ij})&=
 -\frac12 \tr\left[(\Id+\Xtt_a \Xtt_{a^\ast})^{-1} \, \dd\Xtt_a \wedge \dd\Xtt_{a^\ast} 
  + \dd\Xtt_a \wedge (\Id+\Xtt_{a^\ast}\Xtt_a)^{-1} \, \dd\Xtt_{a^\ast} \right]^\flat((\partial_a)_{ij})  \\
&=
 -\frac12 \left[\dd\Xtt_{a^\ast} (\Id+\Xtt_a \Xtt_{a^\ast})^{-1} 
  + (\Id+\Xtt_{a^\ast}\Xtt_a)^{-1} \, \dd\Xtt_{a^\ast} \right]_{ij}\,.
\end{align*}
Thus, we get from \eqref{Eq:qP-MQV} 
\begin{align*}
P^\sharp \circ \omega^\flat((\partial_a)_{ij})
&=\frac12 
 \tr\left[(\Id+\Xtt_{a^\ast}\Xtt_a) \, \partial_a \wedge \partial_{a^\ast}
 +  \partial_a \wedge (\Id+\Xtt_a \Xtt_{a^\ast}) \partial_{a^\ast}\right]^\sharp \circ \omega^\flat((\partial_a)_{ij}) \\
 &= \frac12 (\partial_a)_{ij} 
 + \frac14 \left[ (\Id+\Xtt_{a^\ast}\Xtt_a)^{-1} \partial_a (\Id+\Xtt_a\Xtt_{a^\ast}) 
                + (\Id+\Xtt_{a^\ast}\Xtt_a) \partial_a (\Id+\Xtt_a \Xtt_{a^\ast})^{-1}\right]_{ij}\,.
\end{align*}

Next, we look at the right-hand side of \eqref{Eq:App1}. From the general equalities
\begin{subequations}
\begin{align}
 \langle (\partial_b)_{ij}, \dd(\Id+\Xtt_b \Xtt_{b^\ast})^{\epsilon(b)}_{uv}\rangle
&=\left\{
\begin{array}{lc}
 \delta_{uj} (\Xtt_{b^\ast})_{iv} & \epsilon(b)=+1 ,\\
 - (\Id+\Xtt_b \Xtt_{b^\ast})^{-1}_{uj} (\Xtt_{b^\ast}(\Id+\Xtt_b \Xtt_{b^\ast})^{-1})_{iv} & \epsilon(b)=-1,
\end{array}
\right.\\
 \langle (\partial_b)_{ij}, \dd(\Id+\Xtt_{b^\ast} \Xtt_b)^{-\epsilon(b)}_{uv}\rangle
&=\left\{
\begin{array}{ll}
 -((\Id+\Xtt_{b^\ast} \Xtt_b)^{-1} \Xtt_{b^\ast})_{uj} (\Id+\Xtt_{b^\ast} \Xtt_b)^{-1}_{iv} & \epsilon(b)=+1 ,\\
 (\Xtt_{b^\ast})_{uj} \delta_{iv}  & \epsilon(b)=-1,
\end{array}
\right.
\end{align}
\end{subequations}
(valid for any $b\in \overline{Q}$ and any quiver $Q$),
we deduce for the case at hand
\begin{align*}
 \langle (\partial_a)_{ij}, (\Phi^{-1} \dd\Phi-\dd\Phi\, \Phi^{-1})_{uv}\rangle
 =&(\Id+\Xtt_a \Xtt_{a^\ast})^{-1}_{uj} (\Xtt_{a^\ast})_{iv} - (\Xtt_{a^\ast})_{uj} (\Id+\Xtt_{a^\ast} \Xtt_a)^{-1}_{iv} \\
 &-\delta_{uj} (\Xtt_{a^\ast}(\Id+\Xtt_a \Xtt_{a^\ast})^{-1})_{iv}
 +((\Id+\Xtt_{a^\ast} \Xtt_a)^{-1} \Xtt_{a^\ast})_{uj}  \delta_{iv}\,.
\end{align*}
Together with \eqref{Eq:Act-inf} for $\xi=E_{ij}$, we obtain
\begin{align*}
 \eqref{Eq:App1}_{RHS}
&=(\partial_a)_{ij} - \frac14
 \sum_{b=a,a^\ast}
 \Big[  \Xtt_{a^\ast} (\partial_{b} \Xtt_{b} -  \Xtt_b \partial_b) (\Id+\Xtt_a \Xtt_{a^\ast})^{-1}
+ (\partial_{b} \Xtt_{b} -  \Xtt_b \partial_b) (\Id+\Xtt_{a^\ast} \Xtt_a)^{-1} \Xtt_{a^\ast}  \\
& \qquad \qquad \qquad - (\Id+\Xtt_{a^\ast} \Xtt_a)^{-1} (\partial_{b} \Xtt_{b} -  \Xtt_b \partial_b) \Xtt_{a^\ast}
- \Xtt_{a^\ast}(\Id+\Xtt_a \Xtt_{a^\ast})^{-1} (\partial_{b} \Xtt_{b} -  \Xtt_b \partial_b)  \Big]_{ij} \\
&=(\partial_a)_{ij} - \frac14 
 \Big[  \Xtt_{a^\ast} (\partial_{a^\ast} \Xtt_{a^\ast} -  \Xtt_a \partial_a) (\Id+\Xtt_a \Xtt_{a^\ast})^{-1}
+ (\partial_{a} \Xtt_{a} -  \Xtt_{a^\ast} \partial_{a^\ast}) \Xtt_{a^\ast} (\Id+\Xtt_a \Xtt_{a^\ast})^{-1}   \\
&\qquad \qquad \qquad - (\Id+\Xtt_{a^\ast} \Xtt_a)^{-1} (\partial_a \Xtt_a -  \Xtt_{a^\ast} \partial_{a^\ast}) \Xtt_{a^\ast}
- (\Id+\Xtt_{a^\ast} \Xtt_a)^{-1} \Xtt_{a^\ast} (\partial_{a^\ast} \Xtt_{a^\ast} -  \Xtt_a \partial_a)  \Big]_{ij}\,.
\end{align*}
It is straightforward to conclude after simplifying the second term as  
$$-\frac12 (\partial_a)_{ij} 
 + \frac14 \left[ (\Id+\Xtt_{a^\ast}\Xtt_a)^{-1} \partial_a (\Id+\Xtt_a\Xtt_{a^\ast}) 
+ (\Id+\Xtt_{a^\ast}\Xtt_a) \partial_a (\Id+\Xtt_a \Xtt_{a^\ast})^{-1}\right]_{ij}\,.$$


\Addresses


\begin{thebibliography}{99}

\bibitem{AKSM} Alekseev, A.; Kosmann-Schwarzbach, Y.; Meinrenken, E.: {\it Quasi-{P}oisson manifolds}. Canad. J. Math. 54, no.1, 3--29 (2002);
\href{https://arxiv.org/abs/math/0006168}{arXiv:math/0006168}.

\bibitem{AMM}  Alekseev, A.; Malkin, A.; Meinrenken, E.: {\it Lie group valued moment maps}. J. Differential Geom. 48, 445--495 (1998);
\href{http://arxiv.org/abs/dg-ga/9707021}{arXiv:dg-ga/9707021}.

\bibitem{Ar} Arutyunov, G.: {\it Spin Ruijsenaars–Schneider Models from Reduction}.
Phys. Part. Nuclei Lett. 17, 730--733 (2020).

\bibitem{AF} Arutyunov, G.E., Frolov, S.A.: {\it On the Hamiltonian structure of the spin Ruijsenaars-Schneider model}. J. Phys. A 31 (18), 4203--4216 (1998);
\href{http://arxiv.org/abs/hep-th/9703119}{arXiv:hep-th/9703119}.

\bibitem{AO} Arutyunov, G.; Olivucci, E.: {\it Hyperbolic spin Ruijsenaars--Schneider model from Poisson reduction}.
Proc. Steklov Inst. Math. 309, 31--45 (2020);
 \href{https://arxiv.org/abs/1906.02619}{arXiv:1906.02619}.  

\bibitem{BK} Bezrukavnikov, R.; Kapranov, M.: {\it Microlocal sheaves and quiver varieties}. 
Ann. Fac. Sci. Toulouse Math. (6) 25, no.2-3, 473--516 (2016);  
 \href{https://arxiv.org/abs/1506.07050}{arXiv:1506.07050}.  
 
\bibitem{BEF} Braverman, A.; Etingof, P.; Finkelberg, M.: {\it Cyclotomic double affine Hecke algebras (with an appendix by H. Nakajima and D. Yamakawa)}; 
Ann. Sci Ec. Norm. Super. (4) 53, no.5, 1249--1312 (2020); 
\href{http://arxiv.org/abs/1611.10216}{arXiv:1611.10216}. 
 
\bibitem{Bo07}   Boalch, P.:  {\it Quasi-Hamiltonian geometry of meromorphic connections}.
 Duke Math. J. 139, no.2, 369--405  (2007);
 \href{https://arxiv.org/abs/math/0203161}{arXiv:math/0203161}.
  
 \bibitem{Bo14}   Boalch, P.: {\it Geometry and braiding of Stokes data; Fission and wild character varieties},
 Ann. of Math. 179, 301--365 (2014);
 \href{https://arxiv.org/abs/1111.6228}{arXiv:1111.6228}.  
  
 \bibitem{Bo15}  Boalch, P.: {\it Global Weyl groups and a new theory of multiplicative quiver varieties}. 
 Geom. Topol. 19, no.6, 3467--3536  (2015);  
 \href{https://arxiv.org/abs/1307.1033}{arXiv:1307.1033}.  
  
\bibitem{BCS} Bozec, T.; Calaque, D.; Scherotzke, S.: {\it Calabi–Yau structures on (quasi-)bisymplectic algebras}. 
Forum of Mathematics Sigma 11, Paper e87 (2023);  
\href{https://arxiv.org/abs/2203.14382}{arXiv:2203.14382}.
 
 \bibitem{CF}
 Chalykh, O.; Fairon, M.: {\it On the Hamiltonian formulation of the trigonometric spin Ruijsenaars--Schneider system}.
Lett. Math. Phys. 110, 2893--2940 (2020);
\href{https://arxiv.org/abs/1811.08727}{arXiv:1811.08727}.  
 
\bibitem{CB13} Crawley-Boevey, W.: {\it Monodromy for systems of vector bundles and multiplicative preprojective algebras}.
Bull. Lond. Math. Soc. 45, no.2, 309--317 (2013); 
\href{https://arxiv.org/abs/1109.2018}{arXiv:1109.2018}.

\bibitem{CBShaw}  
Crawley-Boevey, W.; Shaw, P.: {\it Multiplicative preprojective algebras, middle convolution and the Deligne-Simpson problem}. 
Adv. Math. 201, no.1, 180--208 (2006); 
\href{https://arxiv.org/abs/math/0404186}{arXiv:math/0404186}.

\bibitem{Fa21} Fairon, M.: {\it Double quasi-Poisson brackets: fusion and new examples}. 
Algebr. Represent. Theory 24, no.4, 911--958 (2021); 
\href{https://arxiv.org/abs/1905.11273}{arXiv:1905.11273}. 

\bibitem{Fa} Fairon, M.: {\it Integrable systems on multiplicative quiver varieties from cyclic quivers}.
J. Phys. A 58, Paper No. 045202, 77pp (2025);
\href{https://arxiv.org/abs/2108.02496}{arXiv:2108.02496}. 

\bibitem{FF21} Fairon, M.; Feher, L.: {\it A decoupling property of some Poisson structures on 
{$\text{Mat}_{n\times d}(\CC)\times\text{Mat}_{d\times n}(\CC)$} supporting 
{$\text{GL}(n,\CC)\times\text{GL}(d,\CC)$} Poisson-Lie symmetry}. 
J. Math. Phys. 62, no.3, Paper No. 033512, 14pp (2021);
\href{https://arxiv.org/abs/2112.00381}{arXiv:2112.00381}.  

\bibitem{FF23} Fairon, M.; Feher, L.: {\it Integrable multi-Hamiltonian systems from reduction of an extended quasi-Poisson double of $\operatorname{U}(n)$}.
Ann. Henri Poincar\'{e} 24, no.10, 3461--3529 (2023); 
\href{https://arxiv.org/abs/2302.14392}{arXiv:2302.14392}.  


\bibitem{FFern}  Fairon, M.; Fern\'andez, D.: {\it On the noncommutative Poisson geometry of certain wild character varieties}.
J.~Noncommut. Geom. 19, 765--821 (2025);
\href{https://arxiv.org/abs/2103.10117}{arXiv:2103.10117}.

\bibitem{FMC} Fairon, M.; McCulloch, C.: {\it Around Van den Bergh's double brackets for different bimodule structures}. 
Comm. Algebra 51, no.4, 1673--1706 (2023); 
\href{https://arxiv.org/abs/2204.03298}{arXiv:2204.03298}.  

\bibitem{GJS} Ganev, I.; Jordan, D.; Safronov, P.: {\it The quantum Frobenius for character varieties and multiplicative quiver varieties}. 
J. Eur. Math. Soc. 27, no.7, 3023--3084 (2025);
\href{https://arxiv.org/abs/1901.11450}{arXiv:1901.11450}. 

\bibitem{Hu} Huebschmann, J.: {\it Quasi Poisson structures, weakly quasi Hamiltonian structures, and Poisson geometry of various moduli spaces}. 
J. Geom. Phys. 190, Paper No. 104851, 55pp (2023);
\href{https://arxiv.org/abs/2207.06002}{arXiv:2207.06002}. 

\bibitem{Jo} Jordan, D. {\it Quantized multiplicative quiver varieties}. 
Adv. Math. 250, 420--466 (2014); 
\href{https://arxiv.org/abs/1010.4076}{arXiv:1010.4076}. 

\bibitem{KS} Kaplan, D.; Schedler, T.: {\it Multiplicative preprojective algebras are 2-Calabi-Yau}. 
Algebra Number Theory 17, no.4, 831--883 (2023);
\href{https://arxiv.org/abs/1905.12025}{arXiv:1905.12025}. 

\bibitem{KSM} Kosmann-Schwarzbach, Y.; Magri, F.: {\it Lax-Nijenhuis operators for integrable systems}. 
J. Math. Phys. 37, no.12, 6173--6197 (1996). 

\bibitem{KrZ} Krichever, I.M.; Zabrodin, A.: {\it Spin generalization of the Ruijsenaars-Schneider model, the nonabelian two-dimensionalized Toda lattice, and representations of the Sklyanin algebra}. Uspekhi Mat. Nauk 50, no.6(306), 3--56 (1995);
\href{https://arxiv.org/abs/hep-th/9505039}{arXiv:hep-th/9505039}.

\bibitem{LP90}  Le Bruyn, L.; Procesi, C.: {\it Semisimple representations of quivers}. Trans. Amer. Math. Soc. 317, no.2, 585--598 (1990).

\bibitem{LBS1} Li-Bland, D.; \v{S}evera, P.: 
{\it Symplectic and Poisson geometry of the moduli spaces of flat connections over quilted surfaces}. 
In: Calaque et al. (ed.), Mathematical aspects of quantum field theories. Contributions of the les Houches winter school ‘Mathematical aspects of field theories’, Les Houches, France, January and February 2012. Cham: Springer. Mathematical Physics Studies, 343--411 (2015); 
\href{https://arxiv.org/abs/1304.0737}{arXiv:1304.0737}.  

\bibitem{LBS2} Li-Bland, D.; \v{S}evera, P.: {\it Moduli spaces for quilted surfaces and Poisson structures}. 
Doc. Math. 20, 1071--1135 (2015); 
\href{https://arxiv.org/abs/1212.2097}{arXiv:1212.2097}.

\bibitem{LuM} Lu, J.-H.; Mouquin, V.: {\it Mixed product Poisson structures associated to Poisson Lie groups and Lie bialgebras}.
Int. Math. Res. Not. IMRN 2017, no.19, 5919--5976 (2017);
\href{https://arxiv.org/abs/1504.06843}{arXiv:1504.06843}.

\bibitem{MT14} Massuyeau, G.; Turaev, V.: {\it Quasi-Poisson structures on representation spaces of surfaces}. 
Int. Math. Res. Not. IMRN 2014, no.1, 1--64 (2014);
\href{https://arxiv.org/abs/1205.4898}{arXiv:1205.4898}. 

\bibitem{MGN} McGerty, K.; Nevins, T.: {\it The pure cohomology of multiplicative quiver varieties}. 
Selecta Math. (N.S.) 27, no.1, Paper No. 5, 29pp (2021);
\href{https://arxiv.org/abs/1903.08799}{arXiv:1903.08799}. 
 
\bibitem{Nak} Nakajima, H.: {\it Instantons on ALE spaces, quiver varieties, and Kac--Moody algebras}. 
Duke Math. J. 76, no.2, 365--416, (1994).   
 
 \bibitem{Ob} Oblomkov, A.: {\it Double affine {H}ecke algebras and {C}alogero-{M}oser spaces}. Represent. Theory 8, 243--266 (2004);
 \href{https://arxiv.org/abs/math/0303190}{arXiv:math/0303190}. 
 
\bibitem{RS} Ruijsenaars, S.N.M.; Schneider, H.: {\it A new class of integrable systems and its relation to solitons}. Ann. Physics 170(2), 370--405 (1986). 
 
\bibitem{ST} Schedler, T.; Tirelli, A.: 
{\it Symplectic resolutions for multiplicative quiver varieties and character varieties for punctured surfaces}. 
In: Baranovsky et al. (ed.), Representation theory and algebraic geometry. A conference celebrating the birthdays of Sasha Beilinson and Victor Ginzburg, Chicago, IL, USA, August 21–25, 2017. Cham: Birkhäuser. Trends Math., 393--459 (2022);
\href{https://arxiv.org/abs/1812.07687}{arXiv:1812.07687}. 
 
\bibitem{So} Soloviev, F.L.: {\it On the {H}amiltonian form of the equations of the elliptic spin  {R}uijsenaars-{S}chneider  model}. Uspekhi Mat. Nauk 64, no.6(390), 179--180 (2009); 
\href{https://arxiv.org/abs/0808.3875}{arXiv:0808.3875}. 
 
\bibitem{T15} Tacchella, A.: {\it On a family of quivers related to the Gibbons-Hermsen system}.  
J. Geom. Phys. 93, 11--32 (2015); 
\href{https://arxiv.org/abs/1311.4403}{arXiv:1311.4403}. 
 
\bibitem{VdB1} Van den Bergh, M.: {\it Double Poisson algebras}. Trans. Amer. Math. Soc. 360, no.11, 5711--5769 (2008);
\href{https://arxiv.org/abs/math/0410528}{arXiv:math/0410528}.

\bibitem{VdB2} Van den Bergh, M.: \emph{Non-commutative quasi-Hamiltonian spaces}. 
In: Poisson geometry in mathematics and physics. Contemp. Math., vol. 450, Amer. Math. Soc., Providence, RI, pp. 273--299, 2008;
\href{https://arxiv.org/abs/math/0703293}{arXiv:math/0703293}.

\bibitem{Wi}  Wilson, G.: {\it Notes on the vector adelic Grassmannian}. 
Preprint from 2009 (16p.), \href{http://arxiv.org/abs/1507.00693}{arXiv:1507.00693}. 
        
\bibitem{Ya} Yamakawa, D.: \emph{Geometry of multiplicative preprojective algebra}.
Int. Math. Res. Pap. IMRP 2008, Art. ID rpn008, 77pp (2008);
\href{https://arxiv.org/abs/0710.2649}{arXiv:0710.2649}.
\end{thebibliography}
\end{document}